\documentclass{mcom-l}
\usepackage{amsmath}
\usepackage{amssymb}
\usepackage{graphicx}
\usepackage{tikz}
\usepackage{enumitem}
\usepackage{comment}
\usepackage{bm}
\usepackage{soul}
\newlist{steps}{enumerate}{1}
\setlist[steps, 1]{label = (\roman*)}

\usepackage{color}
\usepackage{soul}


\def\WTD{{D_h}}
\def\WTN{{N_h}}

\def\urho{{\underline p}}
\def\orho{{\overline p}}
\def\ukappa{{\underline q}}

\bibliographystyle{siamplain}
\newcommand{\inn}{\textrm{ in }}
\newcommand{\onn}{\textrm{ on }}

\newcommand{\tu}{\widetilde u}
\newcommand{\nn}{\mathbf n}

\newcommand{\scal}{{\theta}}
\newcommand{\scalh}{{\theta_h}}
\newcommand{\deltah}{{\delta_h}}
\newcommand{\jc}[1]{\textcolor{black}{{#1}}}

\def\trino{|\hspace{-1pt} | \hspace{-1pt}|}
\def\xib{{\bm\xi}}
\def\yb{{\bm y}}
\def\xb{{\bm x}}
\def\etab{{\bm\eta}}
\def\alphab{{\bm\alpha}}

\def\KKK{{\mathcal K}}
\def\DDD{{N}}
\def\EEE{{\mathcal E}}
\def\mmm{{\ell}}
\def\mumu{{\mu}}
\newtheorem{remark}{Remark}

\newtheorem{theorem}{Theorem}
\newtheorem{lemma}{Lemma}
\newtheorem{proposition}{Proposition}

\title[The Polynomial Extension Finite Element Method]{{Optimally accurate higher-order finite element \\methods for polytopial approximations of\\ domains with smooth boundaries}}

\author{James Cheung}
\address{Department of Scientific Computing, Florida State University, Tallahassee FL 32309, USA.} \email{{\tt jc07g@my.fsu.edu}}

\author{Mauro Perego}
\address{Center for Computing Research, Sandia National Laboratories, Albuquerque, NM 87123, USA.}
\email{\tt mperego@sandia.gov}

\author{Pavel Bochev}
\address{Center for Computing Research, Sandia National Laboratories, Albuquerque, NM 87123, USA.}
\email{\tt pbboche@sandia.gov}

\author{Max Gunzburger}
\address{Department of Scientific Computing, Florida State University, Tallahassee FL 32309, USA.}
\email{{\tt mgunzburger@fsu.edu}}

\thanks{{Sandia National Laboratories is a multimission laboratory managed and operated by National Technology and Engineering Solutions of Sandia, LLC., a wholly owned subsidiary of Honeywell International, Inc., for the U.S. Department of Energy's National Nuclear Security Administration under contract DE-NA-0003525.}}
\thanks{This material is based upon work supported by the U.S. Department of Energy, Office of Science, Office of Advanced Scientific Computing Research. Additionally, JC and MG were supported by US Department of Energy grant DE-SC0009324 and US Air Force Office of Scientific Research grant FA9550-15-1-0001. }

\subjclass[2010]{Primary 	65N30}

\begin{document}

\begin{abstract}
{Meshing of geometric domains having curved boundaries by affine simplices produces a polytopial approximation of those domains. The resulting error in the representation of the domain limits the accuracy of finite element methods based on such meshes. On the other hand, the simplicity of affine meshes makes them a desirable modeling tool in many applications.  In this paper, we develop and analyze higher-order accurate finite element methods that remain stable and optimally accurate on polytopial approximations of domains with smooth boundaries. This is achieved by constraining a judiciously chosen extension of the finite element solution on the polytopial domain to weakly match the prescribed boundary condition on the true geometric boundary. We provide numerical examples that highlight key properties of the new method and that illustrate the optimal $H^1$ and $L^2$-norm convergence rates.}
\end{abstract}

\maketitle

\section{Introduction}

It is well known that standard finite element methods based on piecewise polynomials of degree greater than one do not achieve optimal accuracy whenever a domain $\Omega$ having a curved boundary is approximated by a polygonal or polyhedral domain $\Omega_h$. Table \ref{table0} illustrates this fact for finite element approximations of a smooth solution of the Poisson equation on the unit disc approximated by inscribed regular polygons with sides of length $h$.
\begin{table}[h!]
\caption{\noindent Finite element convergence rates for smooth solutions of a Poisson equation on the unit disk approximated by a sequence of regular inscribed polygons with side length $h$. The last row shows the theoretical convergence rate of the best approximation (BA) out of each finite element space.
}\label{table0} 
\begin{center}
\begin{tabular}{|c||c|c||c|c||c|c|}
\hline
Element type &
\multicolumn{2}{|c||}{Quadratic}&\multicolumn{2}{|c||}{Cubic}&\multicolumn{2}{|c|}{Quartic }\\
\hline  
Error type&
$L^2(\Omega_h)$ & $H^1(\Omega_h)$  &  $L^2(\Omega_h)$ & $H^1(\Omega_h)$  &   $L^2(\Omega_h)$ & $H^1(\Omega_h)$\\
\hline
Convergence rate&
2.188 & 1.698  & 2.115 & 1.590  & 2.151 & 1.590\\
\hline
BA rate & 3.0 & 2.0 & 4.0 & 3.0 & 5.0 & 4.0 \\
\hline
\end{tabular}
\end{center}
\end{table}
The table shows that in all cases the $L^2(\Omega_h)$-norm convergence rate is capped at approximately $2$ whereas the $H^1(\Omega_h)$-norm convergence rate is approximately $3/2$. Of course, the explanation for such loss of precision is also well known: the approximation theoretic convergence rates for higher-degree polynomials are swamped by the geometric error of $O(h^2)$ resulting from defining the finite element discretization on the approximate domain $\Omega_h$ instead of the true domain $\Omega$, including imposing the boundary condition on the boundary $\Gamma_h$ of $\Omega_h$ instead of on the exact boundary $\Gamma$ of $\Omega$. This loss of accuracy is an example of a \emph{variational crime \em \cite[Chapter 4, p. 172]{strang:1973} \em and has nothing to do with the regularity of the exact solution}; indeed, the loss occurs for $C^\infty(\overline\Omega)$ and even analytic exact solutions.

In this paper, we develop and analyze a new finite element formulation that remains, under certain assumptions, optimally accurate for finite element spaces of arbitrary orders defined on polytopial approximations of geometric domains with smooth boundaries. The significance of this work stems from the fact that finite element methods based on affine simplicial grids remain one of the most efficient instances of this class of methods, both in terms of mesh generation and computational costs. For example, an affine simplex has a constant Jacobian determinant that can be precomputed, thereby allowing significant savings in the application of various pullbacks necessary for, e.g., compatible finite elements. Yet, because the resulting polytopial approximation of the geometric domain is only at best second-order accurate, such meshes create an accuracy bottleneck for higher-order elements. Overcoming this bottleneck is the main purpose of this paper.

To put our work in a proper context, we briefly discuss relevant mesh types and survey related existing literature.  

{\textbf{\em Simplicial mesh types.}}
Meshing of a domain $\Omega$ with curved boundaries by affine simplices yields a polytopial approximation $\Omega_h$ of the former, where $\Omega_h$ is the union of all the simplices. In many practical cases, all vertices on the approximate boundary $\Gamma_h$ lie on the exact boundary $\Gamma$. We refer to such meshes as {\em Type A meshes.} Alternately, for a \emph{Type B mesh}, none or at least not all of the vertices of $\Gamma_h$ lie on $\Gamma$. The simplest examples of Type A and B meshes are inscribed and circumscribed polygons for a disk, respectively. For a Type A mesh the distance between the boundaries of $\Omega$ and $\Omega_h$ is of $O(h^2)$, where $h$ is a measure of the size of the finite element grid cells. For Type B meshes, this distance can be larger than $O(h^2)$. In this work we restrict attention to Type A meshes and Type B meshes for which the distance between the discrete and continuous boundaries is of order $O(h^2)$.

{\textbf{\em Existing work.}}
There are two fundamentally different strategies for achieving optimal error bounds for high-order elements on curved domains. The first focuses on reducing the geometric approximation error in $\Omega_h$ without modifying the underlying variational formulation for the finite element method. A classical example of this idea is the isoparametric finite element method \cite{ergatoudis1968curved} that maps reference elements to curvilinear elements using polynomial transformations of the same degree as that of the finite element space. However, this approach increases the computational cost and, more importantly, only elements of order $k\leq 2$ are able to achieve optimal convergence with respect to the $H^1$ norm \cite{Fried_73_JSV}. For the special case of two dimensions and cubic elements, one can select nodes for which the finite element interpolant is optimally accurate  \cite{ciarlet1972interpolation} but, unfortunately, the finite element solution of the Poisson problem remains suboptimally accurate.

Another example of the first strategy is the isogeometric analysis approach (IGA) \cite{cottrell2009isogeometric, hughes2005isogeometric} that uses nonuniform rational B-splines (NURBS) as a finite element basis and achieves optimal accuracy for curved domains. IGA generates a mesh  of control points for the NURBS basis and then applies a transformation map to the control points to obtain a highly accurate approximation $\Omega_h$ of the curved domain $\Omega$. However, the NURBS basis makes the IGA approach more difficult to implement and more costly to solve than traditional polynomial-based finite elements.

The second, less explored strategy, retains the polytopial domain approximation but modifies the underlying variational problem in order to compensate for the fixed geometric error in $\Omega_h$. For example, optimal error estimates are obtained in \cite{cockburn2014priori, cockburn2012solving, cockburn2014solving} for Type B meshes by using polynomial extensions and line integrals to transfer boundary values from the curved boundary $\Gamma$ to the approximate boundary $\Gamma_h$. The primary difficulties of this approach include the construction of line integrals and the additional expense incurred because of the use of the hybridized discontinuous Galerkin method on mixed formulations of elliptic PDEs.

Recently, in \cite{MAIN2017,scovazziShiftedBoundary2}, a method was developed that achieves optimal error estimates for piecewise linear elements on Type B meshes for Dirichlet elliptic boundary-value problems. A linear extension is constructed to weakly match the boundary conditions by using the Nitsche method. Optimal $H^1$-norm convergence rates are demonstrated  even if the distance between the computational and the real boundaries is $O(h)$. The stability of this approach depends on the specific choices of stabilization parameters. Suboptimal convergence rates estimates are obtained with respect to the $L^2$ norm and higher-order finite element approximations and Neumann boundary conditions are not considered.

{\textbf{\em What is new in this paper.}}
Our new approach is an example of the second strategy, i.e., it relies on suitable modifications of the variational formulation when defining the finite element method in order to recover optimal convergence rates on polytopial approximations of curved domains. The method is applicable to both Dirichlet and Neumann boundary conditions. In a nutshell, it forces a polynomial extension of the approximate solution to match the prescribed boundary condition data on the boundary of the given domain $\Omega$; thus, we refer to this approach as the {\em polynomial extension finite element method} (PE-FEM). The extended Dirichlet condition is weakly enforced whereas the extended Neumann condition is enforced as a natural condition for a modified weak formulation of the boundary-value problem. We prove stability and optimal $H^1(\Omega_h)$ accuracy for both the Dirichlet and Neumann problems and show that, on convex meshes and under additional regularity assumptions, the former also converges optimally in $L^2(\Omega_h)$. Furthermore, computational studies indicate that optimal $L^2(\Omega_h)$-norm convergence is also achieved for the Neumann problem. In addition to recovering optimal accuracy, the method is computationally efficient and simple to implement.

The paper is organized as follows. Section \ref{section: preliminaries} introduces the necessary technical background. In \S\ref{section: PE-FEM method}, we describe the PE-FEM Dirichlet and Neumann formulations and then, in \S\ref{section: analysis}, we prove the well posedness of the discretized problems. Then, in \S\ref{section: analysis2}, we derive optimal error estimates with respect to the $L^2(\Omega_h)$ and $H^1(\Omega_h)$ norms for the Dirichlet problem and optimal $H^1(\Omega_h)$ norm error bounds for the Neumann problem. To streamline the flow of the paper, we relegate long proofs to the appendix. In \S\ref{section: implementation}, we discuss some implementation issues attendant to the PE-FEM and, in \S\ref{section: results}, we provide illustrative numerical results for the PE-FEM based on Type A meshes. Concluding remarks are provided in \S\ref{sec:conclusion}.

\section{Preliminaries} \label{section: preliminaries}

Let $k\ = 2,3 \ldots$ and let $\Omega\subset\mathbb R^\DDD$, $\DDD = 2$, $3$, denote a bounded, open domain having a $C^{k+1}$ boundary $\Gamma$ with $\mathbf n$ denoting the outer unit normal vector. We consider approximations of $\Omega$ by affine simplicial meshes $\Omega_h$, i.e., collections of open $\DDD$-simplices $\{\KKK_j\}$ such that the non-empty intersections of their closures consist of only vertices, complete edges, or complete faces. Here $h := \max_{{\KKK_j} \in \Omega_h}\textrm{diam}({\KKK_j})$ denotes the mesh size parameter. Every mesh $\Omega_h$ defines a polytopial approximation of $\Omega$, which we also denote by $\Omega_h$; see Figure \ref{fig1} for a two-dimensional illustration. We note that the boundary $\Gamma_h$ of $\Omega_h$ is a union of $(\DDD-1)$-simplices $\{{\EEE_i}\}$ so that the outer unit normal vector $\mathbf n_h$ to $\Gamma_h$ is in general a piecewise constant vector and is thus only piecewise continuous. For every $\EEE_i \in \Gamma_h$, let $\KKK_{j_i}$ denote the element of $\Omega_h$ whose closure contains $\EEE_i$ on its boundary. Throughout, $C$ denotes a positive constant whose value changes from one instance to another but which does not depend on $h$.
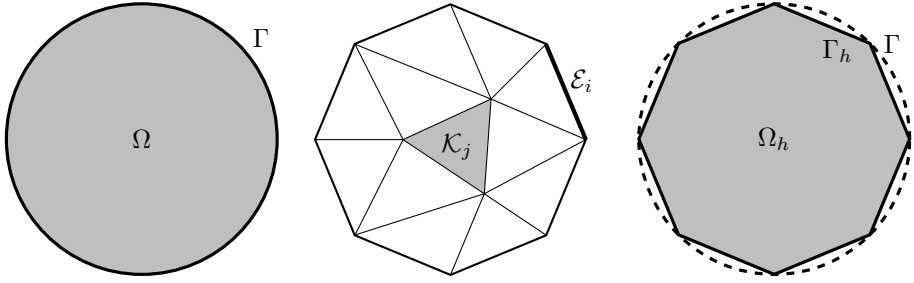
\begin{figure}[h!]
\begin{center}
	\begin{tikzpicture}[scale=0.9]
	\filldraw[color=black! 100, fill=lightgray, very thick](0,0) circle (2);
	\filldraw[black] (0,0) node[anchor=center] {$\Omega$};
	\filldraw[black] (1.5, 1.5) node[anchor=west]{$\Gamma$};
	\end{tikzpicture}
	\quad	
		\begin{tikzpicture}[scale=0.9]
	 \draw[xshift=0.0, thick] (0:2) \foreach \x in {45,90,...,359} {
                -- (\x:2)
            } -- cycle (90:2);
        \filldraw[color=black!100, fill=lightgray] (0.6, 0.6) -- (-0.7, 0) -- (0.5, -0.8) -- cycle;
        \draw[black] (0.6, 0.6) -- (0, 2);
        \draw[black] (0.6, 0.6) -- (1.41, 1.41);
        \draw[black] (0.6, 0.6) -- (2, 0);
        \draw[black] (0.5, -0.8) -- (2,0);
        \draw[black] (0.5, -0.8) -- (1.41, -1.41);
        \draw[black] (0.5, -0.8) -- (0, -2);
        \draw[black] (0.5, -0.8) -- (-1.41, -1.41);
        \draw[black] (-0.7, 0) -- (-1.41, -1.41);
        \draw[black] (-0.7, 0) -- (-2, 0);
        \draw[black] (-0.7,0) -- (-1.41, 1.41);
        \draw[black] (0.6, 0.6) -- (-1.41, 1.41);
	\filldraw[black] (0.1,-0.1) node[anchor=center]{$\KKK_j$};	
	\draw[black, ultra thick] (1.41, 1.41) -- (2,0);
	\filldraw[black] (1.94, 0.86) node[anchor=center]{$\EEE_i$};
	\end{tikzpicture}
	\quad
	\begin{tikzpicture}[scale=0.9]
	 \filldraw[xshift=0.0, fill=lightgray, very thick] (0:2) \foreach \x in {45,90,...,359} {
                -- (\x:2)
            } -- cycle (90:2);
        	\draw[color=black! 100, very thick, dashed](0,0) circle (2);
	\filldraw[black] (0,0) node[anchor=center] {$\Omega_h$};
	\filldraw[black] (1.3, 1.3) node[anchor=east]{$\Gamma_h$};
	\filldraw[black] (2.0, 1.4) node[anchor=east]{$\Gamma$};
	\end{tikzpicture}
\end{center}
	\caption{A curved domain $\Omega$ (left), an associated affine simplicial mesh $\Omega_h$ (center), and the resulting polygonal approximation $\Omega_h$ (right).}\label{fig1}
\end{figure}

It follows from the smoothness assumption made about $\Gamma$ that, for every ${\EEE_i} \in \Gamma_h$, there exists a $C^{k+1}({\overline {\EEE}_i})$ mapping 
$\etab_i: {\EEE_i} \rightarrow \Gamma$ such that  $\etab_i(\xib) \in \Gamma$ for every $\xib \in {\EEE_i}$ and such that
\begin{equation}\label{eqn: distance assumption}
{\max_{{\EEE_i}\in \Gamma_h} \sup_{\xib\in {\EEE_i}} |\etab_i(\xib) - \xib| \leq \deltah }
\end{equation}
for some $\deltah \in \mathbb R^+$, where $|\cdot|$ denotes the Euclidean norm. The mappings $\etab_i$ define a piecewise $C^{k+1}$ map $\etab: \Gamma_h\rightarrow \Gamma$. The value of  $\deltah$ in \eqref{eqn: distance assumption} can be viewed as a measure of the geometric error in the approximation of $\Omega$ by $\Omega_h$. See the left sketch in Figure \ref{fig2} for an illustration. The right sketch of Figure \ref{fig2} illustrates the pullback from $\Gamma$ to $\Gamma_h$, i.e., how the value of a function $v(\etab)$ evaluated at a point $\etab_i\in\Gamma$ is pulled back to the point $\xib\in{\EEE_i}\subset \Gamma_h$.
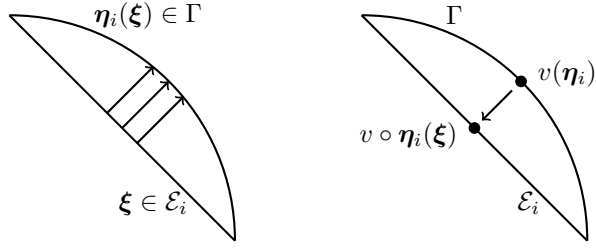
\begin{figure}[h!]
\begin{center}
	\begin{tikzpicture}
		\draw[black, thick] (0,3) -- (3, 0);
		\draw[thick] (3,0) arc (0:90:3);
		\draw[black, thick, ->] (1.5, 1.5) -- (2.12, 2.12);
		\draw[black, thick, ->] (1.3, 1.7) -- (1.92, 2.32);
		\draw[black, thick, ->] (1.7, 1.3) -- (2.32, 1.92);
		\filldraw[black] (2.5, 0.5) node[anchor = east]{$\xib \in {\EEE_i}$};
		\filldraw[black] (1, 3) node[anchor = west]{$\etab_i(\xib) \in \Gamma$};
	\end{tikzpicture}
	\qquad\qquad
	\begin{tikzpicture}
		\draw[black, thick] (0,3) -- (3, 0);
		\filldraw[black] (1.5, 1.5) circle (2pt);
		\filldraw[black] (1.4, 1.4) node[anchor = east]{$v\circ\etab_i(\xib)$};
		\draw[thick] (3,0) arc (0:90:3);
		\filldraw[black] (2.12, 2.12) circle(2pt);
		\filldraw[black] (2.22, 2.22) node[anchor = west]{$v(\etab_i)$};
		\draw[black, thick, <-] (1.6, 1.6) -- (2.0, 2.0);
		\filldraw[black] (2.5, 0.5) node[anchor = east]{${\EEE_i}$};
		\filldraw[black] (1, 3) node[anchor = west]{$\Gamma$};
	\end{tikzpicture}
\caption{Left: An example of a $C^{k+1}$ mapping $\etab_i: {\EEE_i}\rightarrow \Gamma$ defined as the intersection of a line normal to ${\EEE_i}$ with the true boundary $\Gamma$. Right: A sketch of a pullback from the continuous boundary onto the polygonal boundary.}\label{fig2}
\end{center}
\end{figure} 

Let $\alphab = (\alpha_n)_{n=1}^\DDD$, $\alpha_n$ a non-negative integer, denote a multi-index and let $|\alphab| = \sum_{n=1}^\DDD \alpha_n$ and $\alphab! = \prod_{n=1}^\DDD \alpha_n!$. For $\mathcal D=\Omega$ or $\Omega_h$ and for $m\in{\mathbb N}$, let $H^m(\mathcal D)$ denote the standard Sobolev space and $(H^m(\mathcal D))'$ the corresponding dual space; see \cite{adams2003sobolev}. Also, for any $\xib\in\mathbb R^\DDD$, let $\xib^\alphab :=\xi_1^{\alpha_1}\xi_2^{\alpha_2}\cdots\xi_\DDD^{\alpha_\DDD}$ and $D^\alphab :=\partial^{|\alphab|}/\partial^{\alpha_1}\xi_1\partial^{\alpha_2}\xi_2\cdots\partial^{\alpha_\DDD}\xi_\DDD$. For $\mathcal D=\Gamma$ or $\Gamma_h$, we consider the fractional Sobolev space $H^{m-\frac12}(\mathcal D)$. The $k$-th order Lagrange finite element space is defined by
\begin{equation*}
	V^k_h := \left\{ v\in C^0(\overline\Omega_h)\,\,:\,\, v|_{\KKK_j} \in P_k(\KKK_j) \quad \forall\, \KKK_j \in \Omega_h \right\}\subset H^1(\Omega_h),
\end{equation*}
where $P_k(\KKK_j)$ denotes the space of polynomials of degree at most $k$ defined over the $\DDD$-simplex $\KKK_j {\subset}  \mathbb R^\DDD$. In addition, we have the constrained space
\begin{equation*}
	V^k_{h,0} := \left\{ v \in V^k_h \,\,:\,\, v = 0 \onn \Gamma_h \right\} \subset H^1_0(\Omega_h)
\end{equation*}
and the trace space 
\begin{equation*}
	W^k_h:= V^k_h\big|_{\Gamma_h}= \left\{v \in C^0(\Gamma_h)\,\,:\,\, v|_{\EEE_i} \in P_k(\EEE_i) \quad \forall\, \EEE_i \in \Gamma_h\right\}
	  \subset H^{1/2}(\Gamma_h).
\end{equation*}
We also define the discontinuous finite element space
$$
\overline V^k_{h} := \left\{ v \in L^2(\Omega_h) :\,\, v|_{\KKK_j} \in P_k(\KKK_j) \quad \forall\, \KKK_j \in \Omega_h \right\}
$$
and the discrete differential operator $D_h^{\alpha} : \overline V^k_h \rightarrow L^2(\overline \Omega_h)$ defined by 
$$
D_h^{\alpha} v_h(\xb) := 
\begin{cases}
	D^\alpha v_h(\xb) &\textrm{ if } \xb \in {\KKK_j} \text{ for } \KKK_j \in \Omega_h\\
	0 &\textrm{ otherwise}.
\end{cases}
$$
Duality pairings over $\Omega_h$ and $\Gamma_h$ are defined by
\begin{equation*}
	\langle v, w \rangle_{\Omega_h} = \sum_{\KKK_j\in \Omega_h} \int_{\KKK_j} vw d\KKK_j\quad
	\textrm{and}\quad
	\langle v, w \rangle_{\Gamma_h} = \sum_{{\EEE_i} \in \Gamma_h} \int_{{\EEE_i}} vw d{\EEE_i},
\end{equation*}
respectively.
``Broken'' Sobolev norms on $\Omega_h$ and $\Gamma_h$ are defined by 
\begin{equation*}
	\trino v \trino^2_{m, \Omega_h} = \sum_{\KKK_j\in \Omega_h} \| v \|_{m, \KKK_j}^2 \,\,\,\,
	\forall\, v\in V^k_{h}
	\quad\textrm{and} \quad
	\trino w \trino^2_{m, \Gamma_h} = \sum_{{\EEE_i} \in \Gamma_h} \|w \|_{m, {\EEE_i}}^2
	\,\,\,\, \forall\, w\in W^k_{h},
\end{equation*}
respectively. On the discrete spaces $V^k_h$ and $W^k_h$ we have the inverse inequalities involving the corresponding ``broken'' norms given by
\begin{equation*}
	\trino v \trino_{m, \Omega_h} \leq C h^{-1} \trino v \trino_{m-1, \Omega_h}
\,\,\,\,
	\forall\, v\in V^k_{h},\; m=1,2,\ldots
\end{equation*} 
and 
\begin{equation*}
	\trino w \trino_{m+1/2, \Gamma_h} \leq Ch^{-\frac12} \trino w \trino_{m, \Gamma_h} \, \, \, \, \forall w\in W^k_{h},\; m=0,1,\ldots.
\end{equation*}

The smoothness assumption on $\Gamma$ implies the existence of a continuous lifting operator $\mathcal R(\cdot): H^{k+1/2}(\Gamma) \rightarrow H^{k+1}(\Omega)$ such that for all $g\in H^{k+1/2}(\Gamma)$ there exists $v=\mathcal R(g) \in H^{k+1}(\Omega)$ with $\| v\|_{k+1,\Omega}\le C_{\mathcal R}\|g\|_{k+\frac12,\Gamma}$. We also have the continuous discrete lifting operator $\mathcal R_h(\cdot): W^k_h \rightarrow V^k_h$ such that for all $g_h\in W^k_h$ there exists $v_h=\mathcal R_h(g_h) \in V^k_h$ with $\| v_h\|_{k+1,\Omega_h}\le C_{\mathcal R_h}\|g\|_{k+\frac12,\Gamma_h}$. 

Finally we recall the approximation theoretic bound
\begin{equation}\label{eqn: best approximation bound}
	\inf_{\chi \in V^k_h}\| v - \chi \|_{s, \Omega_h} \leq C h^{k-s+1} |v|_{k+1, \Omega_h}
	\quad\mbox{for $s=0,1$ and $\forall\, v\in H^{k+1}({\Omega_h})$}
\end{equation}
that holds under the assumption that $\Omega_h$ is a regular mesh \cite{ciarlet2002finite}.

\subsection{Setting} \label{section:problem}

To present the key ideas of the method without unnecessary technical complications
 we consider the Dirichlet problem 
\begin{equation}\label{contprobd}
	-\nabla \cdot \big( p({\bf x}) \nabla u \big)   = f \quad \inn \Omega \qquad\mbox{and}\qquad u = g_D \quad\onn \Gamma
\end{equation}
and the Neumann problem
\begin{equation}\label{contprobn}
	-\nabla \cdot \big( p({\bf x}) \nabla { u} \big) + q({\bf x}) { u} = f \quad \inn \Omega \qquad\mbox{and}\qquad p\nabla u \cdot \mathbf n = g_N\quad \onn \Gamma .
\end{equation}
Here, $p, q\in C^k(\overline\Omega)$, $g_D({\bf x})\in H^{k+1/2}(\Gamma)$, $g_N({\bf x})\in H^{k-1/2}(\Gamma)$,   and $f\in H^{k-1}(\Omega)$ are given functions such that $\underline{p}\le p({\bf x})\le\overline{p}$ for some $\underline{p}>0$ and $\overline{p}<\infty$ and $q({\bf x})>0$.\footnote{The last assumption obviates the need to work in the quotient space $H^1(\Omega)\setminus\mathbb R$ for the Neumann problem.}

A weak formulation of \eqref{contprobd} seeks $u \in H^1(\Omega)$ such that
\begin{equation}\label{eqn: weak Dirichlet}
	D(u,v) = \left<f,\,v\right>_{\Omega} \quad \forall\, v \in H^1_0(\Omega)\qquad\mbox{and}\qquad u = g_D \onn \Gamma
\end{equation}
whereas a weak formulation of \eqref{contprobn} seeks $u \in H^1(\Omega)$ such that
\begin{equation}\label{eqn: weak Neumann}
	N(u,v) = \left<f,\,v\right>_{\Omega}  + \langle g_N, v\rangle_{\Gamma} \quad \forall\, v \in H^1(\Omega),
\end{equation}
where  the bilinear forms $D(\cdot, \cdot): H^1(\Omega)\times H^1(\Omega)\rightarrow \mathbb R$ and $N(\cdot, \cdot): H^1(\Omega)\times H^1(\Omega)\rightarrow \mathbb R$ are defined by
\begin{equation*}
	D(u,v) := \int_{\Omega}  p \nabla u\cdot \nabla v \,d{\bf x}\qquad\mbox{and}\qquad
	N(u,v) := \int_{\Omega} ( p \nabla u\cdot \nabla v +quv) \,d{\bf x},
\end{equation*}
respectively. Both \eqref{eqn: weak Dirichlet} and \eqref{eqn: weak Neumann} are well-posed for $f \in H^{-1}(\Omega)$, $g_D \in H^{1/2}(\Gamma)$, and $g_N \in H^{- 1/2}(\Gamma)$, whereas our regularity assumptions on $\Gamma$, $g_D$, $g_N$, $p, q,$ and $f$ imply that $u\in H^{k+1}(\Omega)$. 

In general, $\Omega_h\not\subset\Omega$ and $\Omega\not\subset\Omega_h$; see Figure \ref{fig1b} for an illustration. As a result, { if $\Omega_h\not\subset\Omega$, the data $p$, $q$, and $f$ and the solution $u$ of \eqref{contprobd} or \eqref{contprobn} may not be defined on all of $\Omega_h$  so that extensions of these functions from $\Omega$ to $\Omega\cup\Omega_h$ are required.} Our regularity assumptions imply the existence of bounded extensions\footnote{The existence of $C^k$ extensions for the problem coefficients is a consequence of the Tietze-Urysohn extension theorem \cite{ciarlet2013linear}. The existence of bounded extensions $\widetilde f\in H^{k-1}(\mathbb R^\DDD)$ for $k=2,3,\ldots$ is a classical result of Sobolev spaces \cite{adams2003sobolev}. For $k=1$, we can construct $\widetilde f$ by extending $f$ to zero outside of $\Omega$. For $k=0$, we can construct $\widetilde f_n$ as follows. Because $L^2(\Omega)$ is dense in $H^{-1}(\Omega)$, we can write $f$ as the limit of a sequence $f_n \in  L^2(\Omega)$. We construct the functions $\widetilde f_n \in L^2(\mathbb R^\DDD)$ by extending the functions $f_n$ to zero outside of $\Omega$. We note that $\widetilde f_n$ is a Cauchy sequence in $H^{-1}(\mathbb R^\DDD)$ because  $\|\widetilde f_n - \widetilde f_m\|_{-1, \mathbb R^\DDD} \le \|f_n-f_m\|_{-1, \Omega}$ (recall that $\|u\|_{-1, \Omega} := \sup \limits_{v \in H^1(\Omega)} (u,v)_{\Omega}/ \|v\|_{1,\Omega}$). Therefore $\widetilde f_n$ is convergent in $H^{-1}(\mathbb R^\DDD)$ and we define $\widetilde f$ to be its limit.}
$\widetilde p\in C^k(\mathbb R^\DDD)$, 
$\widetilde q\in C^k(\mathbb R^\DDD)$, 
$\widetilde f\in H^{k-1}(\mathbb R^\DDD)$, and 
 $\tu \in H^{k+1}(\mathbb R^\DDD)$
such that $\widetilde p = p$, $\widetilde q = q$, $\widetilde u = u$ and, for $k \ge 1$, $\widetilde f = f$ almost everywhere in $\Omega$. For $k=0$, we have $\langle\widetilde f, v\rangle_{\Omega} = \left<f, v\right>_{\Omega}$ for all $ v \in H^1(\Omega)$.
In particular, there exist extensions such that $\|\widetilde u\|_{k+1,\Omega\cup\Omega_h}\le C_e\|u\|_{k+1,\Omega}$, $\|\widetilde f\|_{k-1,\Omega\cup\Omega_h}\le C_e\|f\|_{k-1,\Omega}$, $\|\widetilde p\|_{C^k({\overline{\Omega\cup\Omega_h})}}\le C_e\|p\|_{C^k({\overline{\Omega}})}$, and $\|\widetilde q\|_{C^k(\overline{\Omega\cup\Omega_h})}\le C_e\|q\|_{C^k(\overline{\Omega})}$ for a constant $C_e>0$ having value independent of $u$, $f$, $p$, or $q$.  

\begin{figure}[h!]
\begin{center}
\begin{tikzpicture}
\draw[color=black! 100, very thick](0,0) circle (3);
\filldraw[xshift=0.0, fill=lightgray, very thick] (0:3) \foreach \x in {45,90,...,359} {
                -- (\x:3)
         } -- cycle (90:2);
\filldraw[color=black! 100, fill=gray, very thick] (0,0) circle(1.5);
\filldraw[xshift=0.0, fill=white, very thick] (0:1.5) \foreach \x in {45,90,...,359} {
                -- (\x:1.5)
         } -- cycle (90:2);
\end{tikzpicture}
\end{center}
\caption{The area between the concentric circles is the given domain $\Omega$ and the area between the concentric octagons is the approximate domain $\Omega_h$ (the regions covered by the two shades of gray). The light gray region is $\Omega\cap\Omega_h$. The dark gray regions are in $\Omega_h$ but not in  $\Omega$ so extensions of functions defined on $\Omega$ are needed in those regions.}\label{fig1b}
\end{figure}
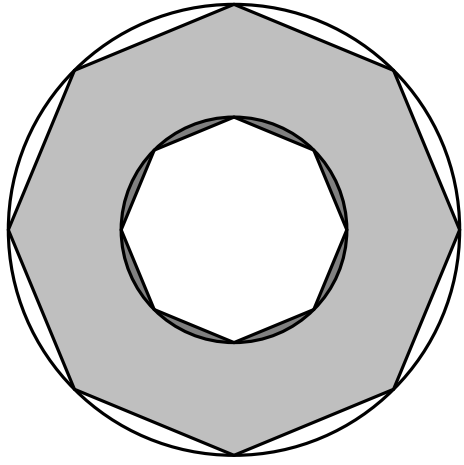

\section{The PE-FEM method}\label{section: PE-FEM method}

We now introduce the \emph{Polynomial Extension Finite Element Method} (PE-FEM) for the approximate solution of \eqref{eqn: weak Dirichlet} or \eqref{eqn: weak Neumann} defined on polytopial domains $\Omega_h$ resulting from meshing of the domain $\Omega$ by affine simplices. To achieve optimal accuracy even if $\Omega$ has a curved boundary, we force the extension of the finite element solution to weakly match the given data on the curved boundary of the continuous problem.

We define the bilinear forms $\WTD(\cdot, \cdot): H^1(\Omega_h)\times H^1(\Omega_h)\rightarrow \mathbb R$ and $\WTN(\cdot, \cdot): H^1(\Omega_h)\times H^1(\Omega_h)\rightarrow \mathbb R$ as 
\begin{equation}\label{eqn: discrete bilinear definitions1}
\begin{aligned}
	{\WTD}(u, v) &:= \int_{\Omega_h}\widetilde p\,\nabla u \cdot \nabla v  \,d{\bf x}\qquad\forall\, u,v\in H^1(\Omega_h)\\
	{\WTN}(u, v) &:= \int_{\Omega_h}\big( \widetilde p\, \nabla u \cdot \nabla v +  \widetilde q\, u v\big) \,d{\bf x}\qquad\forall\, u,v\in H^1(\Omega_h),\\
\end{aligned}
\end{equation}
respectively.

We start presenting the PE--FEM method for the Dirichlet and Neumann problems in its simplest form, and then, in the rest of the section, we will derive the method in an equivalent formulation that is more amenable to being mathematically analyzed.

{\textbf{\em The PE-FEM for the Dirichlet problem.}}
Seek $u_h \in V^k_h$ such that
\begin{equation}\label{eqn: PEF Dirichlet problem0}
	{\WTD}(u_h, v) = \langle\widetilde f, \,v\rangle_{\Omega_h} \quad \forall\, v \in V^k_{h,0}
\end{equation}
and
\begin{equation}\label{eqn: PEF Dirichlet problem0d}
	\sum_{i} \big\langle E_{\KKK_{j_i}} (u_h) \circ \etab(\xib), \mumu \big\rangle_{\EEE_i} = \big\langle g_D\circ\etab(\xib), \mumu \big\rangle_{\Gamma_h} 
	\quad \forall\, \mumu \in  W_h^k = V^k_h\big|_{\Gamma_h},
\end{equation}
where $E_{\KKK_{j_i}} (u_h)$ is the operator that extends the polynomial $u_h|_{\KKK_{j_i}}$ to a polynomial over $\mathbb R^\DDD$. Implementation of this method requires the evaluation of $E_{\KKK_{j_i}} (u_h) \circ \etab(\xib)$ at a set of quadrature points $\{\xib_q\}$ { on $\EEE_i$}. This can be accomplished by evaluating the polynomial basis functions that generate $u_h|_{\KKK_{j_i}}$ at the points $\{\etab(\xib_q)\}$ that can be outside the element $\KKK_{j_i}$.

{\textbf{\em The PE-FEM for the Neumann problem.}}
Seek $u_h \in V^k_h$ such that
\begin{equation}\label{eqn: PEF Neumann problem0}
{\WTN}(u_h, v) + \boldsymbol{\tau}_{N} (u_h,v) =  \langle\widetilde f, \,v\rangle_{\Omega_h} + \langle g_N \circ \etab(\xib), v \rangle_{\Gamma_h} \quad \forall\, v \in V^k_h,
\end{equation}
where
\begin{equation}\label{eqn: Neumann bilinear form0}
\boldsymbol{\tau}_{N} (u_h,v) := 
	\sum_i \Big\langle  \widetilde p\circ\etab(\xib)\;   \nabla \left( E_{\KKK_{j_i}} (u_h) \circ \etab(\xib) \right) \cdot \mathbf  n -  \widetilde p\,\nabla u_h \cdot \nn_h, v \Big\rangle_{\EEE_i} 
\end{equation}
is an auxillary term that provides additional accuracy to the standard finite element formulation. When $\Gamma_h = \Gamma$ and $\etab$ is taken to be the identity operator, we have that $\boldsymbol{\tau}_{N} (u_h,v) = {0}$. 

Additional details about the implementation of the method are provided in Section \ref{section: implementation}.

In order to analyze the method, we reformulate the PE-FEM problem so that it is well defined also for trial functions in $H^k(\Omega_h)$. In particular we need to generalize the extension operator $E$ to functions in Sobolev spaces. We achieve this by using averaged Taylor polynomials. The reformulated Dirichlet and Neumann PE-FEM problems will be equivalent to the ones in \eqref{eqn: PEF Dirichlet problem0}-\eqref{eqn: Neumann bilinear form0} whenever $u_h \in V^k_h$.

\subsection{Averaged Taylor polynomial extensions} \label{sec: Taylor approximation}

The mismatch between the exact domain $\Omega$ and its polygonal approximation $\Omega_h$ requires approximation of the boundary condition data on $\Gamma_h$. In this section, we focus on the extension of functions belonging to $H^{k+1}(\Omega_h)$ from the approximate boundary $\Gamma_h$ onto the true boundary $\Gamma$. {A common approach is to approximate the true boundary condition data on $\Gamma_h$ by a low-order reconstruction. However, due to the geometric error resulting from the approximation of the domain, this approach restricts the numerical solution to be at best second-order accurate regardless of the degree of the underlying finite element space.} Here, instead of interpolating the boundary condition data, we \emph{extend} the finite element solution from the approximate boundary $\Gamma_h$ to the true boundary $\Gamma$ and require it to weakly match the boundary condition data prescribed on that boundary. The main tool we use for defining the extension is the averaged Taylor polynomials, described below.

For every $\EEE_i \in \Gamma_h$, let $\KKK_{j_i}$ denote the element of $\Omega_h$ whose closure contains $\EEE_i$ and let $\{S^{i,\mmm}\}$ denote a family, indexed by $\mmm$, of disjoint star-shaped domains with respect to the balls $\sigma^{i,\mmm} \subset \KKK_{j_i} \cap \Omega$ such that $S^{i,\mmm} \cap \KKK_{j_{i'}} = \emptyset$ for $i \neq i'$, $\mbox{diam}(S^{i,\mmm}) \le C \deltah$, and $\overline{ {\cup_{i,\mmm}} S^{i,\mmm}} \supset \overline{\Omega_h \setminus (\Omega \cap \Omega_h)}$. We also require that $\overline {S^{i,\mmm}} \cap \etab(\EEE_i) \subset  S^{i,\mmm}$ and $\overline {S^{i,\mmm}} \cap \EEE_i \subset S^{i,\mmm}$ and that $\sup \limits_{i,\mmm} \frac{\mbox{diam}(S^{i,\mmm})}{\mbox{radius}(\sigma^{i,\mmm})} \le C$ with $C$ independent of ${\deltah}$. See Figure \ref{kite} for an illustration of how star-shaped domains $S^{i,\mmm}$ can be constructed for triangular meshes.

\begin{remark}
Whereas it possible to construct the star-shaped domains $S^{i,\mmm}$ with the properties listed above for simple geometries/meshes, we do not have a proof for  general domains and shape-regular meshes considered in this paper. If we allow the domains $S^{i,\mmm}$ to overlap up to a finite number of times, it may be possible to follow the construction in {\em\cite[Section 2.2]{narcowich_sobolevBounds_2005}.} However, this would further increase the complexity of the analysis so we prefer to limit our analysis to the case for which the domains $S^{i,\mmm}$ do not overlap.
\end{remark}

\begin{figure}[h!]
\begin{center}
\begin{tikzpicture}
\draw[red, very thick, fill= red!10!white] (1.5, 2) --(2.15, 2.65) -- (2.65, 2.15) -- (2.0,1.50)  (1.5, 2)  arc (135: 315: 0.3525);
\draw[blue, dashed] (0,4) .. controls (1.65, 3.1141) and (3.1141, 1.65) .. (4,0);
\draw[magenta, dashed] (1.75,1.75)  circle (0.34); 
\draw[black, thick] (0,0) -- (0,4);
\draw[black, thick] (0,0) -- (4,0);
\draw[black, thick] (0,4) -- (4,0);
\filldraw[black] (1.75, 1.75) node[anchor = center]{\color{magenta} \tiny $\sigma^{i,\mmm}$};
\filldraw[black] (1, 1) node[anchor = center]{\color{black} \Large $\KKK_{j_i}$};
\filldraw[black] (2.75, 0.65) node[anchor = center]{\color{black} \Large $\EEE_i$};
\filldraw[black] (1,3.5) node[anchor = south]{\color{blue}\Large$\Gamma$};
\filldraw[black] (3.25, 2.1) node[anchor = center]{\color{red} \Large $S^{i,\mmm}$};
\draw[black, thick, ->] (2.8, 2.1) -- (2.25, 2.1);
\end{tikzpicture}
\qquad \qquad 
\begin{tikzpicture}
\draw[red, very thick, fill= red!10!white] (1.5 - 0.45, 2- 0.45) --(2.15- 0.3, 2.65- 0.3) -- (2.65- 0.3, 2.15- 0.3) -- (2.0- 0.45,1.50- 0.45)  (1.5- 0.45, 2- 0.45)  arc (135: 315: 0.3525);
\draw[blue, dashed] (0,4) .. controls (1.65 - 1.0, 3.1141 - 1.0) and (3.1141- 1.0, 1.65 - 1.0) .. (4,0);
\draw[magenta, dashed] (1.75- 0.45,1.75- 0.45)  circle (0.34); 
\draw[black, thick] (0,0) -- (0,4);
\draw[black, thick] (0,0) -- (4,0);
\draw[black, thick] (0,4) -- (4,0);
\filldraw[black] (1.75 - 0.45, 1.75 - 0.45) node[anchor = center]{\color{magenta} \tiny $\sigma^{i,\mmm}$};
\filldraw[black] (1 - 0.35, 1 - 0.35) node[anchor = center]{\color{black} \Large $\KKK_{j_i}$};
\filldraw[black] (2.75+ 1.0, 0.65) node[anchor = center]{\color{black} \Large $\EEE_i$};
\filldraw[black] (1,2.25) node[anchor = south]{\color{blue}\Large$\Gamma$};
\filldraw[black] (3.25, 2.1) node[anchor = center]{\color{red} \Large $S^{i,\mmm}$};
\draw[black, thick, ->] (2.8, 2.1) -- (1.75, 1.75);
\end{tikzpicture}
\end{center}
\caption{Illustration of the construction of a star-shaped (with respect to $\sigma^{i,\mmm}$) set $S^{i,\mmm}\subset \mathbb R^\DDD$ for $\Gamma \cap \KKK_{j_i} = \emptyset$ (left) and  $\Gamma \cap \KKK_{j_i} \neq \emptyset$ (right).}\label{kite}
\end{figure}
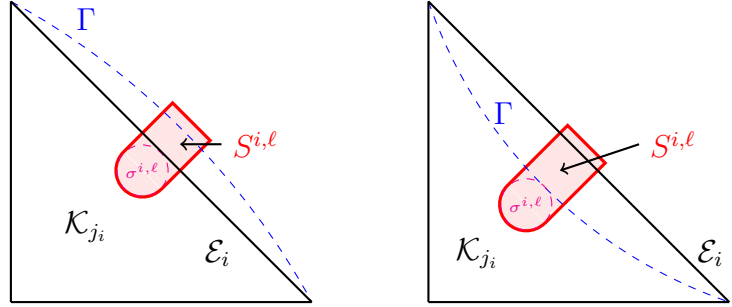

Following \cite{brenner2007mathematical}, we define, for $\xb \in \mathbb R^\DDD$ and $v \in L^2(\Omega \cap \Omega_h)$, the averaged Taylor polynomial\footnote{Averaged Taylor polynomials on star-shaped domains $S^{i,\mmm}$ are defined for functions in $L^1(\sigma^{i,\mmm})$; see \cite[Corollary 4.1.15.]{brenner2007mathematical}.}
\begin{equation}\label{Taylor definition}
\left . T^{k}_h (v)\right |_{\xb} := \sum_{i,\mmm} \mathbf 1_{S^{i,\mmm}}(\xb) \displaystyle \int_{\sigma^{i,\mmm}} {\Big( \sum^k_{|\alphab| = 0} \frac{1}{\alphab !} D^\alphab v(\yb) (\xb - \yb)^\alphab  \phi_\mmm(\yb) \Big) d \yb, }
\end{equation}
where $\phi_\mmm(\yb)$ is a cutoff function and $\mathbf 1_{S^{i,\mmm}(\xb)}$ denotes the indicator function for the set $S^{i,\mmm}$, i.e., $\mathbf 1_{S^{i,\mmm}(\xb)}= 1$ if $ \xb \in S^{i,\mmm}$ and vanishes otherwise. Note that $T^k_h$ is meaningful only for $\xb \in \bigcup_{i,\mmm} S^{i,\mmm}$ and is zero otherwise. For any $\xib \in \Gamma_h$ and its image $\etab(\xib) \in \Gamma$ and for ${v \in L^2(\overline \Omega)}$, we write 
\begin{equation}\label{Taylor on Gamma_h}
	 v\circ\etab(\xib) = \left . T^k_{h} (v) \right |_{\etab(\xib)} + \left . R^k_h (v) \right|_{\etab(\xib)}.
\end{equation}
For $v \in H^{k+1}(\mathbb R^\DDD)$ we have $\|R^k_h (v) |_{\etab(\xib)}\|_{0, \Gamma_h} \le C \deltah^{k+\frac12} |v|_{k+1, \mathbb R^\DDD}$; see Lemma \ref{lm: Taylor approximation}.
If $v \in \overline V^{k}_h$, then in every $\KKK_{j_i}$, $v$ is a polynomial of degree $k$ and therefore $T^k_h (v)$ exactly reproduces  $v$ in any $\KKK_{j_i}$ adjacent to the boundary and is equivalent to the classical Taylor polynomial. For $v\in \overline V^{k}_h$ we can therefore write, for a generic $\yb_i \in \KKK_{j_i}$,
$$
\begin{aligned}
\left . T^{k}_{h}(v) \right |_{\xb} &= \sum_{i,\ell} \mathbf 1_{S^{i,\mmm}}(\xb) \sum^k_{|\alphab| = 0} \frac{1}{\alphab !} D^\alphab v(\yb_i)(\xb_i - \yb_i)^\alphab \\
&= \sum_i \mathbf 1_{\left(\cup _j S^{i,\mmm}\right)}(\xb) \sum^k_{|\alphab| = 0} \frac{1}{\alphab !} D^\alphab v(\yb_i)(\xb_i - \yb_i)^\alphab.
\end{aligned}
$$
Setting $\yb_i = \xib \in \mathcal{E}_i$ and $\xb = \etab(\xib)$ we now have that
\begin{equation}\label{Taylor discrete}
\left . T^{k}_{h}(v) \right |_{\etab(\xib)} = \sum^k_{|\alphab| = 0} \frac{1}{\alphab !} D_h^\alphab v(\xib)(\etab(\xib) - \xib)^\alphab
\end{equation}
which is well-defined for any $\xib \in \Gamma_h$ and $v\in \overline V_h^{k}$. Note that if $\xib \in \EEE_i$ and $v$ is a polynomial of degree $k$ defined on $\KKK_{j_i}$ then $\left .T^{k}_{h}(v) \right |_{\etab(\xib)} \equiv E_{\KKK_{j_i}}(v) \circ \etab(\xib)$.
For convenience, we also define $T^{k',k}_{h}$ as
\begin{equation}\label{Taylor discrete m-k}
\left . T^{k',k}_{h}(v) \right |_{\etab(\xib)} = \sum^k_{|\alphab| = k'} \frac{1}{\alphab !} D_h^\alphab v(\xib)( \etab(\xib) - \xib)^\alphab.
\end{equation}
Clearly $T^{k}_{h} = T^{0,k}_{h}$. For vector functions $\mathbf v$, we introduce the vector operator $\mathbf{T}^{k}_{h}(\mathbf v) = \left(T^k_h v_n \right)_{n=1}^\DDD$. We use this notation in particular for gradients of scalar functions, i.e., $\mathbf{T}^{k}_{h}(\nabla v)$.



\subsection{PE-FEM methods using averaged Taylor polynomials} \label{sc:derivation}

Using the averaged Taylor polynomial extensions and Taylor's theorem allows one to represent the Dirichlet and Neumann data prescribed at $\etab(\xib)\in \Gamma$ as functions of $\xib\in\Gamma_h$ given by
\begin{equation}\label{eqn: Dirichlet representation}
	g_D \circ \etab(\xib) =  \left . T^k_h(\tu) \right |_{\etab(\xib)} + \left.R^k_{h} (\tu)\right|_{\etab(\xib)}
\end{equation}
and
\begin{equation}\label{eqn: Neumann representation}
	g_N \circ \etab(\xib) = \widetilde p\circ\etab(\xib)  \Big(  \mathbf{T}_h^{k-1}(\nabla \tu) \big|_{\etab(\xib)} \cdot \mathbf n
	 + \mathbf{R}^{k-1}_{h} (  \nabla \tu)\big|_{\etab(\xib)}\cdot \mathbf n \Big),
\end{equation}
respectively, where $ R^k_h (\tu) |_{\etab(\xib)}$ and $ \mathbf{R}^{k-1}_{h} \left(  \nabla \tu\right)|_{\etab(\xib)}$ denote the remainder terms of the averaged Taylor polynomials. These representations are used in the definition of the boundary conditions for the PE-FEM formulations.

{\textbf{\em The PE-FEM Dirichlet problem.}}
We use \eqref{eqn: Dirichlet representation} to supply the Dirichlet boundary condition \eqref{eqn: PEF Dirichlet problem0d} for the problem posed on the approximate domain $\Omega_h$. Note that for $u_h\in V_h^k$, the remainder term in \eqref{eqn: Dirichlet representation} vanishes. Then, for the Taylor polynomal extension approach, the \emph{PE-FEM Dirichlet problem} \eqref{eqn: PEF Dirichlet problem0} and \eqref{eqn: PEF Dirichlet problem0d} is to seek $u_h \in V^k_h$ such that
\begin{equation}\label{eqn: PEF Dirichlet problem}
\left\{\begin{aligned}
	{\WTD}(u_h, v) = \langle\widetilde f, \,v\rangle_{\Omega_h} &\quad \forall\, v \in V^k_{h,0}\\
	\big< T^k_h u_h(\xib)\big|_{\etab(\xib)}, \mumu \big>_{\Gamma_h} = \langle g_D\circ\etab(\xib), \mumu \rangle_{\Gamma_h} 
	&\quad \forall\, \mumu \in  W_h^k = V^k_h\big|_{\Gamma_h}.
\end{aligned}\right.
\end{equation}

\begin{remark}
The problem \eqref{eqn: PEF Dirichlet problem} is not a Dirichlet problem, per se. The boundary condition, i.e., the second equation in \eqref{eqn: PEF Dirichlet problem}, involves derivatives of the unknown $u_h$ of order up to $k$ evaluated along the boundary edges ${\EEE}_i$ of the approximate domain $\Gamma_h$. The inclusion of these derivatives in the boundary condition { imposed on the approximate boundary $\Gamma_h$} is, of course, what leads to the optimal accuracy of the PE-FEM approximation.
\end{remark}

In order to use the same space for the trial and test functions, we reformulate the problem \eqref{eqn: PEF Dirichlet problem} as follows. Let $(\cdot)_\star: V^k_h\rightarrow W^k_h$ denote the trace operator and $\mathcal R_h: W^k_h\rightarrow V^k_h$ a discrete linear lifting operator. Also, let 
\begin{equation}\label{eqn: Dirichlet bilinear form}
  B_{h,D}^{\scal}(u, v) := {\WTD}(u,v - \mathcal R_h v_\star) + \scalh \big< T^k_h u(\xib) \big|_{\etab(\xib)}, v \big>_{\Gamma_h}
\end{equation}
and
\begin{equation}\label{eqn: Dirichlet rhs term}
F_{h,D}^{\scal}(v) := \big< \widetilde f, \,v- \mathcal R_h v_\star\big>_{\Omega_h} + \scalh \langle g_D \circ \etab(\xib), v \rangle_{\Gamma_h}.
\end{equation}
We then seek $u_h \in V^k_h$ such that
\begin{equation}\label{eqn: PEF Dirichlet coercive problem}
B_{h, D}^\scal(u_h,v) = F_{h,D}^\scal(v)
	\quad \forall v\in V^k_h.
\end{equation}
Because $v - \mathcal R_h v_\star$ spans the entirety of $V_{h,0}^k$, the formulations \eqref{eqn: PEF Dirichlet problem} and \eqref{eqn: PEF Dirichlet coercive problem} are equivalent for any nonzero $\scalh \in \mathbb R$. {The choice of scaling factor $\scalh$ and does not affect the solution, but choosing $\scalh \sim O(h^{-1})$ balances, with respect to $h$, the two terms on the right-hand side of \eqref{eqn: Dirichlet bilinear form}, is needed to prove  the coercivity of the bilinear form $B_{h, D}^\scal$}, and may positively affect properties of the stiffness matrix; see \S\ref{section: analysis}. 

A careful inspection shows that the problems { \eqref{eqn: PEF Dirichlet coercive problem}}, \eqref{eqn: PEF Dirichlet problem}, and \eqref{eqn: PEF Dirichlet problem0} are equivalent.

{\textbf{\em The PE-FEM Neumann problem.}}
The Taylor series representation \eqref{eqn: Neumann representation} of the Neumann data implies that
\begin{equation}\label{eqn: Neumann condition}
	0 \approx g_N \circ \etab(\xib) -	\widetilde p\circ\etab(\xib)\;\left. \mathbf{T}^{k-1}_h\left( \nabla u_h \right)\right|_{\etab(\xib)} \cdot \mathbf n.
\end{equation}
By adding $\widetilde p(\xib) \nabla u_h \cdot \mathbf n_h$ to both sides, we can approximate the Neumann data as
\begin{equation}\label{eqn: Neumann term}
\begin{aligned}
\widetilde p(\xib) \nabla u_h \cdot \mathbf n_h &\approx  g_N \circ \etab(\xib) +  {\widetilde p(\xib)} \nabla u_h \cdot  \mathbf n_h - \widetilde p\circ\etab(\xib)\;\left.\mathbf{T}^{k-1}_h \left( \nabla u_h \right) \right|_{\etab(\xib)} \cdot \mathbf n.
\end{aligned}
\end{equation}

The discrete weak form \eqref{eqn: PEF Neumann problem0} and \eqref{eqn: Neumann bilinear form0} of \eqref{contprobn} is given by
$$
{\WTN}(u_h, v) - \big< \widetilde p {(\xib)}\,\nabla u_h \cdot \nn_h, v \big>_{\Gamma_h} = \langle\widetilde f, \,v\rangle_{\Omega_h} \quad \forall v \in V^k_h.
$$
Incorporating \eqref{eqn: Neumann term} yields the \emph{PE-FEM Neumann formulation}: seek $u_h \in V^k_h$ such that
\begin{equation}\label{eqn: PEF Neumann problem}
B_{h, N} (u_h,v) = F_{h,N}(v) \quad \forall\, v \in V^k_h,
\end{equation}
where
\begin{equation}\label{eqn: Neumann bilinear form}
B_{h, N} (u_h,v) := {\WTN} (u_h, v) + \boldsymbol{\tau}_N(u_h, v)
\end{equation}
with 
$$
\boldsymbol{\tau}_N(u_h, v) := \big< \widetilde p\circ\etab(\xib)\;  \mathbf{T}^{k-1}_h ( \nabla u_h )\big|_{\etab(\xib)}\cdot \mathbf  n -  \widetilde p{(\xib)}\,\nabla u_h \cdot \nn_h, v \big>_{\Gamma_h} 
$$
and
\begin{equation}\label{eqn: Neumann rhs term}
F_{h,N}(v) := \langle\widetilde f, \,v\rangle_{\Omega_h} + \big\langle g_N \circ \etab(\xib), v \big\rangle_{\Gamma_h} \quad \forall\, v \in V^k_h.
\end{equation}

A careful inspection shows that the problems \eqref{eqn: PEF Neumann problem} and \eqref{eqn: PEF Neumann problem0} are equivalent.

\begin{remark}
There is a price to pay for obtaining optimal convergence rates for higher-order finite element methods on polygonal domains for problems posed on non-polygonal domains, namely that the discretized systems \eqref{eqn: PEF Dirichlet problem} and \eqref{eqn: PEF Neumann problem} are not symmetric, even for given symmetric problems, i.e., even if ${\WTD}(\cdot,\cdot)$ and ${\WTN}(\cdot,\cdot)$ are symmetric bilinear forms. However, if these forms are indeed symmetric and if an iterative linear system solver is used, the additional computational cost due to any destruction of symmetry is not so burdensome. The contributions to the stiffness matrices associated with those forms, being associated with interior nodes of the mesh, are much larger than the contributions associated with the terms causing the lack of symmetry because the latter are associated with boundary nodes.
\end{remark}

\section{{Analysis} of the PE-FEM formulations} 

We now show that \eqref{eqn: PEF Dirichlet coercive problem} and \eqref{eqn: PEF Neumann problem} are well posed and satisfy a \emph{polynomial preserving property}. Then, we prove that the PE-FEM formulations for $k^{th}$ order Lagrangian finite element spaces are optimally accurate in the $H^1(\Omega_h)$ norm. Additionally, we prove optimal $L^2(\Omega_h)$ convergence for the PE-FEM Dirichlet formulation under certain conditions on $\Omega_h$ and additional regularity on $u$. Throughout the section we assume that $\Omega_h$ consists of a regular mesh \cite{ciarlet2002finite} and let ${\urho}:= \min_{x \in \Omega_h} \widetilde p(x)$, ${\orho} := \max_{x\in \Omega_h} \widetilde p(x)$, and $\ukappa = \min_{x \in \Omega_h} \widetilde q(x)$.

\subsection{Well posedeness of the PE-FEM methods} \label{section: analysis}

\begin{theorem}[{\textbf{Well posedness of the PE-FEM Dirichlet approximation}}]\label{theorem: Dirichlet weak coercivity}
Let $B^\scal_{h, D}(\cdot, \cdot)$ be defined as in \eqref{eqn: Dirichlet bilinear form} with $\urho > 0$. Assume that $\scal_h \geq C_\scal h^{-1}$ with $C_\scal$ large enough and assume that $\deltah \sim {o}(h^{\frac32})$. Then, for $h$ small enough and $k=1,2,\ldots$, we have that
\begin{equation} \label{eqn: Dirichlet boundedness}
	B^{\scal}_{h, D}(u, v) \leq \Big ({\orho} + \scalh \big ( 1+ C \sum^k_{|\alphab| = 1} h^{\frac12-|\alphab|} \deltah^{|\alphab|} \big) \Big)  \| u \|_{1, \Omega_h} \| v \|_{1, \Omega_h} \quad\forall u,v \in {V^h_k}
\end{equation}
and
\begin{equation} \label{eqn: Dirichlet coercivity condition}
	B^\scal_{h,D}(u,u) \geq \gamma_D \|u\|^2_{1, \Omega_h} \quad \forall u \in V^k_h.
\end{equation}
If $g_D\circ\etab(\xib) \in H^{1/2}(\Gamma_h)$, {then \eqref{eqn: PEF Dirichlet coercive problem} has a unique solution $u_h$ and that solution satisfies the stability bound}
\begin{equation} \label{eqn: Dirichlet data control}
	\| u_h \|_{1, \Omega_h} \leq C\big( \|\widetilde f \|_{-1, \Omega_h} + \|g_D\jc{\circ \etab(\xib)}\|_{1/2, \Gamma_h}\big).
\end{equation}
\end{theorem}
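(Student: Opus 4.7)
The plan is to verify the continuity \eqref{eqn: Dirichlet boundedness} and coercivity \eqref{eqn: Dirichlet coercivity condition} of $B^\scal_{h,D}$ separately, then invoke the Lax--Milgram lemma on $V^k_h$ to obtain existence, uniqueness, and the a priori bound \eqref{eqn: Dirichlet data control}. For continuity, I would split $B^\scal_{h,D}(u,v)$ into the $\WTD$ summand and the boundary Taylor summand. The $\WTD$ summand is bounded by $\orho\|u\|_{1,\Omega_h}\|v-\mathcal R_h v_\star\|_{1,\Omega_h}\leq C\orho\|u\|_{1,\Omega_h}\|v\|_{1,\Omega_h}$ using boundedness of $\mathcal R_h$ and the trace theorem. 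For the boundary summand, expanding via \eqref{Taylor discrete} I treat the $|\alphab|=0$ piece through a standard trace inequality, and for $|\alphab|\geq 1$ I combine $|(\etab(\xib)-\xib)^\alphab|\leq\deltah^{|\alphab|}$ with a discrete inverse trace inequality on $W^k_h$ and the inverse inequality $\trino D^\alphab_h u\trino_{0,\Omega_h}\leq Ch^{1-|\alphab|}\|u\|_{1,\Omega_h}$ valid on $V^k_h$ to obtain $\|D^\alphab_h u\|_{0,\Gamma_h}\leq Ch^{1/2-|\alphab|}\|u\|_{1,\Omega_h}$; summing over $\alphab$ gives the stated factor.

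For coercivity set $v=u$ and compute
\[
B^\scal_{h,D}(u,u) = \WTD(u,u) - \WTD(u,\mathcal R_h u_\star) + \scalh\|u\|_{0,\Gamma_h}^2 + \scalh\sum_{|\alphab|=1}^k\frac{1}{\alphab!}\langle D^\alphab_h u\,(\etab(\xib)-\xib)^\alphab, u\rangle_{\Gamma_h}.
\]
Bound $\WTD(u,u)\geq\urho\|\nabla u\|_{0,\Omega_h}^2$. For the lifting term, a discrete inverse trace argument gives $\|\mathcal R_h u_\star\|_{1,\Omega_h}\leq Ch^{-1/2}\|u\|_{0,\Gamma_h}$, so Young yields $|\WTD(u,\mathcal R_h u_\star)|\leq\tfrac{\urho}{4}\|\nabla u\|_{0}^2 + C\orho^2\urho^{-1}h^{-1}\|u\|_{0,\Gamma_h}^2$. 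Each correction with $|\alphab|\geq 1$ is bounded using the $\|D^\alphab_h u\|_{0,\Gamma_h}$ estimate from the continuity step, giving (after Young) a contribution $\tfrac{\urho}{4k}\|\nabla u\|_{0}^2 + C\urho^{-1}\scalh^2\deltah^{2|\alphab|}h^{1-2|\alphab|}\|u\|_{0,\Gamma_h}^2$. With $\scalh\geq C_\scal h^{-1}$ and $\deltah=o(h^{3/2})$, each correction coefficient equals $C_\scal(\deltah/h)^{2|\alphab|}h^{-1}=o(\scalh)$, and taking $C_\scal > 2C\orho^2\urho^{-1}$ keeps the coefficient of $\|u\|_{0,\Gamma_h}^2$ above $\scalh/2$ for $h$ small. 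A Poincar\'e--Friedrichs inequality on $\Omega_h$ then converts $\tfrac{\urho}{2}\|\nabla u\|_{0}^2 + \tfrac{\scalh}{2}\|u\|_{0,\Gamma_h}^2$ into $\gamma_D\|u\|_{1,\Omega_h}^2$ with $\gamma_D$ independent of $h$.

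Lax--Milgram now yields a unique $u_h\in V^k_h$. For the stability bound \eqref{eqn: Dirichlet data control} I set $v=u_h$ in \eqref{eqn: PEF Dirichlet coercive problem}, use the refined lower bound $B^\scal_{h,D}(u_h,u_h)\geq c_1\|\nabla u_h\|_{0}^2 + c_2\scalh\|u_h\|_{0,\Gamma_h}^2$, and bound $F^\scal_{h,D}(u_h)\leq C\|\widetilde f\|_{-1}\|u_h\|_1 + \scalh\|g_D\circ\etab\|_{0,\Gamma_h}\|u_h\|_{0,\Gamma_h}$ by Cauchy--Schwarz; successive Young steps absorbing $\|u_h\|_1$ and $\sqrt{\scalh}\|u_h\|_{0,\Gamma_h}$ into the left side together with $\|g_D\circ\etab\|_{0,\Gamma_h}\leq C\|g_D\circ\etab\|_{1/2,\Gamma_h}$ yield the stated estimate. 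The hardest step is the coercivity, where the lifting contributes a potentially destabilizing $h^{-1}\|u\|_{0,\Gamma_h}^2$ term and each Taylor correction contributes $\scalh^2\deltah^{2|\alphab|}h^{1-2|\alphab|}\|u\|_{0,\Gamma_h}^2$; the twin scaling assumptions $\scalh\gtrsim h^{-1}$ and $\deltah=o(h^{3/2})$ are precisely what make both absorbable into the single penalty $\scalh\|u\|_{0,\Gamma_h}^2$.
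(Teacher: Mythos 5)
Your continuity and coercivity arguments follow essentially the same route as the paper: split off the lifting term, control the Taylor boundary terms with $|(\etab(\xib)-\xib)^\alphab|\le\deltah^{|\alphab|}$ plus inverse and inverse-trace inequalities (this is the content of Lemma \ref{lm: Taylor discrete bound}), absorb the destabilizing $h^{-1}\|u\|_{0,\Gamma_h}^2$ from the lifting and the $\scalh^2\deltah^{2|\alphab|}h^{1-2|\alphab|}$ corrections into the penalty using $\scalh\gtrsim h^{-1}$ and $\deltah=o(h^{3/2})$, and finish with Friedrichs. Those two parts are fine.

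The gap is in the stability bound \eqref{eqn: Dirichlet data control}. Testing \eqref{eqn: PEF Dirichlet coercive problem} with $v=u_h$ and using your refined lower bound $c_1\|\nabla u_h\|_0^2+c_2\scalh\|u_h\|_{0,\Gamma_h}^2$ against $\scalh\langle g_D\circ\etab,u_h\rangle_{\Gamma_h}\le\sqrt{\scalh}\,\|g_D\circ\etab\|_{0,\Gamma_h}\cdot\sqrt{\scalh}\,\|u_h\|_{0,\Gamma_h}$ leaves, after Young, a term $C\scalh\|g_D\circ\etab\|_{0,\Gamma_h}^2\sim h^{-1}\|g_D\circ\etab\|_{0,\Gamma_h}^2$ on the right that cannot be removed by $\|g_D\circ\etab\|_{0,\Gamma_h}\le C\|g_D\circ\etab\|_{1/2,\Gamma_h}$. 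What you obtain this way is $\|u_h\|_{1,\Omega_h}\le C(\|\widetilde f\|_{-1,\Omega_h}+h^{-1/2}\|g_D\circ\etab\|_{0,\Gamma_h})$ --- which is exactly the paper's weaker Lemma \ref{lm: stability for error bound}, not \eqref{eqn: Dirichlet data control}; the penalty parameter $\scalh\sim h^{-1}$ necessarily pollutes any bound extracted from the penalized form. The paper gets the $h$-uniform estimate by a different mechanism: it returns to the equivalent unpenalized system \eqref{eqn: PEF Dirichlet problem}, reads the boundary equation as $u_h|_{\Gamma_h}=\pi_h[g_D\circ\etab]-\pi_h[T^{1,k}_h(u_h)|_{\etab(\xib)}]$, invokes the $H^{1/2}(\Gamma_h)$-stability of the $L^2$ projection (Lemma \ref{continuity of projection}) to bound $\|u_h\|_{1/2,\Gamma_h}$ by $\|g_D\circ\etab\|_{1/2,\Gamma_h}$ plus an $o(1)$ multiple of $|u_h|_{1,\Omega_h}$, and separately tests the interior equation with $u_h-\mathcal R_h(u_h)_\star$ to bound $|u_h|_{1,\Omega_h}$ by $\|\widetilde f\|_{-1,\Omega_h}+\|u_h\|_{1/2,\Gamma_h}$; combining the two closes the loop without any negative power of $h$. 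You need this projection argument (or an equivalent one); Lax--Milgram alone will not deliver \eqref{eqn: Dirichlet data control}.
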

The proof is provided in \S\ref{dwc}

\begin{theorem}[{\textbf{Well-posedness of the PE-FEM Neumann approximation}}]\label{theorem: Neumann well--posednessn}
Let $B_{h, N}(u,v)$,  be defined as in \eqref{eqn: Neumann bilinear form}. Assume that $\deltah \sim {o}(h^{\frac32})$, $\widetilde p , \widetilde q > 0$ in $\Omega_h$ and $\widetilde p >0$ on $\Gamma$. Then, for $h$ small enough and $k=1,2,\ldots$, we have that
\begin{equation} \label{eqn: Neumann continuity bound}
	B_{h,N}(u,v)\leq C \| u \|_{1, \Omega_h} \|v\|_{1, \Omega_h} \quad\forall u, v \in V_h^k
\end{equation}
and 
\begin{equation} \label{eqn: Neumann coercivity}
	B_{h,N}(u,u) \geq \gamma_N \|u\|^2_{\Omega_h} \quad \forall u \in V^k_h.
\end{equation}
{If $g_N\circ\etab(\xib) \in H^{-1/2}(\Gamma_h)$, then \eqref{eqn: PEF Neumann problem} has a unique solution $u_h$ and that solution satisfies the stability bound}
\begin{equation} \label{eqn: Neumann data control}
	\| u_h \|_{1, \Omega_h} \leq C {\gamma^{-1}_N}\big( \| \widetilde f \|_{-1, \Omega_h} + \|g_N\circ\etab(\xib)\|_{-1/2, \Gamma_h} \big).
\end{equation}
{Finally, for  all $u, v \in H^1(\Omega_h)$ such that $u|_{\KKK_n} \in H^{k+1}(\KKK_n)$ for all $\KKK_n \subset \Omega_h$, we have that, for $k=2,3,\ldots$,
\begin{equation} \label{eqn: Neumann continuity bound in H1}
\begin{aligned}
B_{h,N}(u,v)
	&\le C\Big[\|u\|_{1, \Omega_h} + 
		( \delta_h h^\frac32 + h^\frac52 ) \trino u \trino_{3, \Omega_h} 
		\\&
	\quad + \delta_h^\frac12 \trino u \trino_{2, \Omega_h} + \delta_h^{k-\frac12} \trino u \trino_{k+1, \Omega_h}\Big] \|v\|_{1, \Omega_h}.
\end{aligned}
\end{equation}
}
\end{theorem}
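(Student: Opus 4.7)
The plan is to decompose $B_{h,N}(u,v)=\widetilde N(u,v)+\boldsymbol\tau_N(u,v)$ and handle each piece separately: $\widetilde N$ is a standard elliptic bilinear form whose continuity and coercivity are immediate from the bounds on $\widetilde p$ and $\widetilde q$, while the boundary correction $\boldsymbol\tau_N$ is a small perturbation whose size is controlled by $\delta_h$, $h$, and (on $V^k_h$) finite-element inverse estimates.

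For \eqref{eqn: Neumann continuity bound}, after bounding the $\widetilde N$ contribution by a constant times $\|u\|_{1,\Omega_h}\|v\|_{1,\Omega_h}$, I would expand $\mathbf T^{k-1}_h(\nabla u)|_{\etab(\xib)}=\nabla u(\xib)+\sum_{|\alphab|=1}^{k-1}\tfrac{1}{\alphab!}D^\alphab_h\nabla u(\xib)(\etab(\xib)-\xib)^\alphab$ and regroup the integrand of $\boldsymbol\tau_N(u,v)$ as
$$
(\widetilde p\!\circ\!\etab-\widetilde p)\nabla u\!\cdot\!\mathbf n + \widetilde p\,\nabla u\!\cdot\!(\mathbf n-\nn_h) + \widetilde p\!\circ\!\etab\sum_{|\alphab|=1}^{k-1}\tfrac{1}{\alphab!}D^\alphab_h\nabla u(\xib)(\etab(\xib)-\xib)^\alphab\!\cdot\!\mathbf n.
$$
Combining $|\widetilde p\!\circ\!\etab-\widetilde p|\le C\delta_h$, $|\mathbf n-\nn_h|\le Ch$, and $|\etab(\xib)-\xib|^{|\alphab|}\le \delta_h^{|\alphab|}$ with the inverse inequality $\|D^\alphab_h\nabla u\|_{0,\Gamma_h}\le Ch^{-|\alphab|-1/2}\|u\|_{1,\Omega_h}$ on $V^k_h$, each summand is bounded by a constant multiple of $\|u\|_{1,\Omega_h}\|v\|_{1,\Omega_h}$ whose coefficient is $o(1)$ under $\delta_h\sim o(h^{3/2})$. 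The same estimates, applied with $v=u$, yield $|\boldsymbol\tau_N(u,u)|\le\varepsilon(h)\|u\|^2_{1,\Omega_h}$ with $\varepsilon(h)\to 0$; combined with $\widetilde N(u,u)\ge\min(\urho,\ukappa)\|u\|^2_{1,\Omega_h}$, taking $h$ small gives \eqref{eqn: Neumann coercivity}. Well-posedness and \eqref{eqn: Neumann data control} then follow by a Lax--Milgram argument after controlling the boundary pairing on the right-hand side via the standard trace $\|v\|_{1/2,\Gamma_h}\le C\|v\|_{1,\Omega_h}$ and testing \eqref{eqn: PEF Neumann problem} with $v=u_h$.

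The main obstacle is the sharper bound \eqref{eqn: Neumann continuity bound in H1}, which must hold for only piecewise $H^{k+1}$ trial functions $u$ paired against arbitrary $v\in H^1(\Omega_h)$, so the inverse inequality used above is no longer available on the $u$-side and all regularity must be extracted from the broken Sobolev norms. I would rewrite the boundary integrand of $\boldsymbol\tau_N(u,v)$ as
$$
\widetilde p\!\circ\!\etab\bigl[\mathbf T^{k-1}_h(\nabla u)|_{\etab(\xib)}-\nabla u\!\circ\!\etab(\xib)\bigr]\!\cdot\!\mathbf n+\bigl[\widetilde p\!\circ\!\etab\,\nabla u\!\circ\!\etab(\xib)\!\cdot\!\mathbf n-\widetilde p(\xib)\nabla u(\xib)\!\cdot\!\nn_h\bigr].
$$
The first summand is the averaged-Taylor remainder for $\nabla u$, and Lemma~\ref{lm: Taylor approximation} (applied with $k$ replaced by $k-1$) directly yields the $\delta_h^{k-1/2}\trino u\trino_{k+1,\Omega_h}$ contribution. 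For the second summand I would Taylor-expand $\nabla u\circ\etab$ around $\xib$ with integral remainder, trading the displacement $\etab(\xib)-\xib$ (of order $\delta_h$) and the normal mismatch $\mathbf n-\nn_h$ (of order $h$) for additional derivatives of $u$; combined with a strip-type trace inequality $\|w\|_{0,\Gamma_h}^2\le C\delta_h\|\nabla w\|_{0, \Omega_h}^2$ applied to the $\delta_h$-strip of $\Omega_h$ adjacent to $\Gamma_h$, and the standard trace $\|v\|_{0,\Gamma_h}\le C\|v\|_{1,\Omega_h}$ on the test side, the first-order remainder of $\nabla u$ produces the $\delta_h^{1/2}\trino u\trino_{2,\Omega_h}$ term, while the second-order remainder combined with the geometric factors produces the mixed coefficient $(\delta_h h^{3/2}+h^{5/2})\trino u\trino_{3,\Omega_h}$. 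The careful bookkeeping that produces these half-integer powers of $\delta_h$ from the strip trace exchange, and the specific mixed geometric/regularity coefficient for the third-order norm, is where I expect most of the technical work to lie.
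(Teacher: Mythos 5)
Your treatment of \eqref{eqn: Neumann continuity bound}, \eqref{eqn: Neumann coercivity}, and \eqref{eqn: Neumann data control} follows essentially the same route as the paper: the same splitting of $\boldsymbol\tau_N$ into a coefficient-mismatch term ($|\widetilde p\circ\etab-\widetilde p|\le C\deltah$), a normal-mismatch term ($|\nn-\nn_h|\le Ch$), and the higher-order Taylor terms controlled by inverse inequalities on $V^k_h$ (this is exactly Lemma \ref{lm: Taylor discrete bound}), followed by absorption into the coercive part of $\WTN$ and a Lax--Milgram argument. That part is fine.

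The gap is in your argument for \eqref{eqn: Neumann continuity bound in H1}. You insert the intermediate quantity $\nabla u\circ\etab(\xib)$, but for $u$ defined only on $\Omega_h$ (merely $H^1(\Omega_h)$ globally, piecewise $H^{k+1}$) this object is not well-defined: the points $\etab(\xib)\in\Gamma$ generally lie \emph{outside} $\overline\Omega_h$ (e.g.\ for a Type A mesh of a convex domain), so $\nabla u$ cannot be evaluated there, and the subsequent ``Taylor expansion of $\nabla u\circ\etab$ around $\xib$ with integral remainder'' integrates along segments leaving $\Omega_h$. Relatedly, Lemma \ref{lm: Taylor approximation} cannot be applied to $\nabla u$ because it requires $v\in H^{k}(U)$ on a single domain $U$ containing all the stars $S^{i,\mmm}$, whereas here $\nabla u$ has only broken regularity. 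The paper avoids both problems by comparing $\mathbf T^{k-1}_h(\nabla u)\big|_{\etab(\xib)}$ directly with $\nabla u(\xib)$ on $\Gamma_h$ via Lemma \ref{lm: Taylor approximation 2}, which is stated precisely for piecewise-regular $v$ and yields the $\deltah^{\frac12}\trino u\trino_{2,\Omega_h}+\deltah^{k-\frac12}\trino u\trino_{k+1,\Omega_h}$ terms in one stroke; the remaining factor $\trino u\trino_{3,\Omega_h}$ with coefficient $\deltah h^{\frac32}+h^{\frac52}$ then arises not from a second-order Taylor remainder, as you suggest, but from bounding $\|\nabla u\|_{0,\Gamma_h}\le Ch^{-\frac12}\|\nabla u\|_{0,\Omega_h}+Ch^{\frac32}\trino\nabla u\trino_{2,\Omega_h}$ (Lemma \ref{lm: u approx}) and multiplying by the geometric factor $(\deltah+h)$ coming from the coefficient and normal mismatches. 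Finally, the strip trace inequality you invoke, $\|w\|^2_{0,\Gamma_h}\le C\deltah\|\nabla w\|^2_{0,\Omega_h}$, is false as stated (take $w\equiv 1$); a correct version necessarily carries an additional $\deltah^{-1}\|w\|^2_{0}$ term, which would destroy the powers of $\deltah$ you are trying to produce. To repair your argument you would need to replace the $\nabla u\circ\etab$ pivot by the on-$\Gamma_h$ comparison of Lemma \ref{lm: Taylor approximation 2} and the trace bound of Lemma \ref{lm: u approx}.
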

The proof is provided in \S\ref{nwc}

\begin{remark}
Theorems \ref{theorem: Dirichlet weak coercivity} and \ref{theorem: Neumann well--posednessn}, establish well posedness of the PE-FEM Dirichlet and Neumann problems for the linear diffusion-reaction equation. We remark that if a convection operator also appears along with the elliptic operator, the above analysis remains, for the most part, unchanged, i.e., it can be treated in the same manner as convection terms are handled by standard finite element methods. 
\end{remark}


\subsection{Polynomial preserving property}

The finite element space $V^k_h$ contains the {\em global} polynomial space $P_k(\Omega_h)$. Thus, a desirable property of a finite element method implemented using $V^k_h$ is for it to exactly recovery of global polynomial fields in $P_k(\Omega_h)$. Whereas this ``patch test'' is not sufficient for optimal convergence, it provides a useful diagnostic tool for code verification.

It is straightforward to show that PE-FEM preserves global polynomial fields. Given an $r \in P_k(\Omega_h)$ and a forcing function $\widehat f = L r$, where $L$ denotes, as appropriate, either the strong operator in \eqref{contprobd} for Dirichlet problems or the strong operator in \eqref{contprobn} for Neumann problems, it is clear that $r$ satisfies
\begin{equation*}
	{\WTD}(r, v) = \langle\widetilde f, v\rangle_{\Omega_h} \quad \forall v \in V^k_{h,0} \qquad \textrm{and}\qquad
	{\WTN}(r, v) = \langle\widetilde f, v\rangle_{\Omega_h} \quad \forall v \in V^k_h .
\end{equation*}
Because Taylor series preserve polynomials, the boundary conditions of \eqref{eqn: PEF Dirichlet coercive problem} and \eqref{eqn: PEF Neumann problem} are satisfied by $r$ if $g_D = r|_{\Gamma}$ and $g_N = { (\widetilde p \nabla r \cdot \nn)|_\Gamma}$. Furthermore, {if the conditions in Theorems \ref{theorem: Dirichlet weak coercivity} and \ref{theorem: Neumann well--posednessn} hold, $u_h = r$ is the unique solution of the PE-FEM equations.} Thus, we have established the following proposition. 
\begin{proposition}
	The PE-FEM methods, c.f. \eqref{eqn: PEF Dirichlet coercive problem} and \eqref{eqn: PEF Neumann problem}, are polynomial preserving.
\end{proposition}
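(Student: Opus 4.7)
The plan is to verify, for both the Dirichlet and Neumann formulations, that a global polynomial $r\in P_k(\Omega_h)$ with consistent data is a solution of the PE-FEM system, and then to invoke the uniqueness supplied by Theorems \ref{theorem: Dirichlet weak coercivity} and \ref{theorem: Neumann well--posednessn}. The only substantive point to check is that the averaged Taylor polynomial terms appearing in the boundary equations collapse to the exact boundary data when the argument is a global polynomial of the appropriate degree.

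First, I would fix $r\in P_k(\Omega_h)$, set $\widehat f := Lr$, and set $g_D := r|_\Gamma$ (Dirichlet case) or $g_N := (\widetilde p\,\nabla r\cdot\nn)|_\Gamma$ (Neumann case). Since $r$ is smooth on $\Omega_h$, an integration by parts against any $v\in V^k_{h,0}$ (Dirichlet) or $v\in V^k_h$ (Neumann) against $Lr$ yields exactly $\WTD(r,v)=\langle\widehat f,v\rangle_{\Omega_h}$ in the Dirichlet case and $\WTN(r,v) - \langle \widetilde p\,\nabla r\cdot \nn_h, v\rangle_{\Gamma_h} = \langle \widehat f, v\rangle_{\Omega_h}$ in the Neumann case; these are precisely the volume equations of \eqref{eqn: PEF Dirichlet coercive problem} and \eqref{eqn: PEF Neumann problem}, once the Neumann boundary term is reabsorbed through \eqref{eqn: Neumann term}.

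The crux is the boundary equation, and this is where I would spend the most care. For the Dirichlet case I need $\langle T^k_h r|_{\etab(\xib)},\mumu\rangle_{\Gamma_h} = \langle g_D\circ\etab,\mumu\rangle_{\Gamma_h}$ for every $\mumu\in W^k_h$. Since $r\in \overline V^k_h$, formula \eqref{Taylor discrete} applies and gives, at each $\xib\in \EEE_i$, a finite Taylor expansion of $r$ about $\xib$ of degree $k$ evaluated at $\etab(\xib)$. Because $r$ is a global polynomial of degree at most $k$, Taylor's theorem is exact with zero remainder, so $T^k_h r|_{\etab(\xib)} = r(\etab(\xib)) = g_D\circ\etab(\xib)$ pointwise on $\Gamma_h$, which trivially implies the required duality identity. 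The Neumann case is analogous: $\nabla r$ is a (vector-valued) polynomial of degree at most $k-1$, so $\mathbf T^{k-1}_h(\nabla r)|_{\etab(\xib)} = \nabla r(\etab(\xib))$ exactly; combined with the definition of $\boldsymbol{\tau}_N$ and the choice $g_N = (\widetilde p\,\nabla r\cdot\nn)|_\Gamma$, this shows that the boundary contribution in \eqref{eqn: PEF Neumann problem} matches $F_{h,N}$ against any $v\in V^k_h$.

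With $r$ exhibited as a solution, uniqueness follows immediately: provided the hypotheses of Theorems \ref{theorem: Dirichlet weak coercivity} and \ref{theorem: Neumann well--posednessn} are in force ($\scal_h \gtrsim h^{-1}$, $\deltah=o(h^{3/2})$, $h$ small enough, and the positivity assumptions on $\widetilde p,\widetilde q$), the bilinear forms $B^\scal_{h,D}$ and $B_{h,N}$ are coercive on $V^k_h$, so the finite-dimensional square linear systems they define have unique solutions; hence $u_h=r$. I expect no real obstacle beyond bookkeeping; the one place requiring care is confirming that the exact Taylor reproduction from \eqref{Taylor discrete} is available at \emph{every} boundary edge simultaneously, which is immediate here because $r$ is globally polynomial (so the choice of the reference element $\KKK_{j_i}$ in the averaged Taylor construction is irrelevant).
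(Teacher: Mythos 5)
Your proposal is correct and follows essentially the same route as the paper: exhibit $r$ as a solution by checking the volume equations with $\widehat f = Lr$, use the exactness of degree-$k$ (resp. degree-$(k-1)$) Taylor expansions of global polynomials to verify the boundary terms via \eqref{Taylor discrete}, and conclude $u_h=r$ from the uniqueness guaranteed by the coercivity in Theorems \ref{theorem: Dirichlet weak coercivity} and \ref{theorem: Neumann well--posednessn}. If anything, your treatment of the Neumann case is slightly more careful than the paper's, since you explicitly track the boundary term $\langle \widetilde p\,\nabla r\cdot\nn_h, v\rangle_{\Gamma_h}$ arising from integration by parts against test functions in all of $V^k_h$ and note how it is reabsorbed into $\boldsymbol{\tau}_N$.
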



\subsection{Error estimates for PE-FEM approximations} \label{section: analysis2}

Using results from \S\ref{section: analysis}, we now prove optimal $H^1(\Omega_h)$-norm error estimates for the Dirichlet and Neumann PE-FEM problems and optimal$L^2(\Omega_h)$-norm error estimates for the Dirichlet PE-FEM problem.


\begin{theorem}[{\textbf{$H^1(\Omega_h)$-norm error estimate for the Dirichlet PE-FEM approximation}}]\label{theorem: Dirichlet PE-FEM H1}
Assume that $\widetilde f \in H^{k-1}(\Omega)$, $g_D \in H^{k+\frac12}(\Gamma)$, and the hypotheses of Theorem  \ref{theorem: Dirichlet weak coercivity} hold with the additional assumption that $\deltah\sim o(h^\frac32)$ if $d = 2$ {and $\deltah \sim O(h^2)$ if $d = 3$}. Let $u_h \in V_h^k$ denote the solution of \eqref{eqn: PEF Dirichlet coercive problem}, $u \in H^{k+1}(\Omega)$ the solution to \eqref{eqn: weak Dirichlet}, and $\tu \in H^{k+1}(\mathbb R^\DDD)$ the extension of the latter. Then,
$$
\|\tu - u_h\|_{1,\Omega_h} \le C h^{k} (\|u\|_{k+1, \Omega} + \|f\|_{k-1,\Omega})\quad  \text{for } k=2,3,\ldots. 
$$
\end{theorem}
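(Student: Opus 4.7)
The plan is a Strang-type error analysis that leans on the coercivity of $B^{\scal}_{h,D}$ from Theorem \ref{theorem: Dirichlet weak coercivity}. Let $\Pi_h \tu \in V^k_h$ denote a suitable interpolant of $\tu$ (Lagrange or Scott--Zhang). By the triangle inequality $\|\tu - u_h\|_{1,\Omega_h} \le \|\tu - \Pi_h \tu\|_{1,\Omega_h} + \|\Pi_h \tu - u_h\|_{1,\Omega_h}$, the first summand is $O(h^k \|u\|_{k+1,\Omega})$ by \eqref{eqn: best approximation bound} together with the extension bound $\|\tu\|_{k+1,\Omega_h}\le C_e\|u\|_{k+1,\Omega}$. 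For the second summand I set $v := \Pi_h\tu - u_h \in V^k_h$ and use coercivity together with $B^{\scal}_{h,D}(u_h,v)=F^{\scal}_{h,D}(v)$ to obtain
$$
\gamma_D \|v\|^2_{1,\Omega_h} \le B^{\scal}_{h,D}(\Pi_h\tu - \tu, v) + \bigl[B^{\scal}_{h,D}(\tu, v) - F^{\scal}_{h,D}(v)\bigr].
$$
It then suffices to bound each bracketed piece by $Ch^k(\|u\|_{k+1,\Omega}+\|f\|_{k-1,\Omega})\|v\|_{1,\Omega_h}$.

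For the consistency error $B^{\scal}_{h,D}(\tu,v)-F^{\scal}_{h,D}(v)$, I substitute \eqref{eqn: Dirichlet bilinear form}--\eqref{eqn: Dirichlet rhs term} and integrate the bulk term by parts using that $v-\mathcal R_h v_\star$ vanishes on $\Gamma_h$. Because $-\nabla\cdot(\widetilde p\nabla\tu)=\widetilde f$ holds in $\Omega$, the resulting volume integrand $\widetilde f + \nabla\cdot(\widetilde p\nabla\tu)$ vanishes on $\Omega\cap\Omega_h$ and the integral collapses to the sliver $\Omega_h\setminus\Omega$ of width at most $\deltah$; a Poincar\'e-in-a-strip argument (exploiting the zero trace of $v-\mathcal R_h v_\star$ on $\Gamma_h$) then controls this term by $C\deltah^{3/2}(\|f\|_{k-1,\Omega}+\|u\|_{k+1,\Omega})\|v\|_{1,\Omega_h}$. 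The boundary consistency $\scalh\langle g_D\circ\etab-T^k_h(\tu)|_\etab,v\rangle_{\Gamma_h}$ equals $\scalh\langle R^k_h(\tu)|_\etab,v\rangle_{\Gamma_h}$ by \eqref{eqn: Dirichlet representation}; Lemma \ref{lm: Taylor approximation} together with $\|v\|_{0,\Gamma_h}\le C\|v\|_{1,\Omega_h}$ bounds it by $C\scalh\deltah^{k+\frac12}|u|_{k+1,\Omega}\|v\|_{1,\Omega_h}$. With $\deltah=O(h^2)$ and $\scalh\sim h^{-1}$, both contributions are comfortably $O(h^k)\|v\|_{1,\Omega_h}$.

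For the approximation error $B^{\scal}_{h,D}(\Pi_h\tu-\tu,v)$, the bulk part $\WTD(\Pi_h\tu-\tu,\,v-\mathcal R_h v_\star)$ is $O(h^k|u|_{k+1,\Omega_h})\|v\|_{1,\Omega_h}$ by the $\orho$-continuity of $\WTD$ together with \eqref{eqn: best approximation bound}. The boundary part $\scalh\langle T^k_h(\Pi_h\tu-\tu)|_\etab,v\rangle_{\Gamma_h}$ is the main obstacle: since $\scalh=O(h^{-1})$, the naive continuity estimate \eqref{eqn: Dirichlet boundedness} and a crude $L^2(\Gamma_h)$ pairing yield only $O(h^{k-\frac12})\|v\|_{1,\Omega_h}$, losing half an order. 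To recover the missing factor of $h$, the plan is two-fold. First, I exploit the polynomial reproduction identity $T^k_h(\Pi_h\tu)|_{\etab(\xib)}=\Pi_h\tu|_{\KKK_{j_i}}(\etab(\xib))$ (since $\Pi_h\tu|_{\KKK_{j_i}}$ is a polynomial of degree $\le k$), which combined with \eqref{Taylor on Gamma_h} rewrites
$$
T^k_h(\Pi_h\tu-\tu)|_{\etab(\xib)} = \bigl[\Pi_h\tu|_{\KKK_{j_i}}-\tu\bigr](\etab(\xib)) + R^k_h(\tu)|_{\etab(\xib)},
$$
whose $R^k_h$ piece is already controlled by Lemma \ref{lm: Taylor approximation}. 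Second, I pair the extrapolation error $[\Pi_h\tu|_{\KKK_{j_i}}-\tu](\etab(\xib))$ with $v$ in the $H^{-\frac12}(\Gamma_h)\times H^{\frac12}(\Gamma_h)$ duality rather than in $L^2$, using the trace $\|v\|_{\frac12,\Gamma_h}\le C\|v\|_{1,\Omega_h}$ and a refined $O(h^{k+1}|u|_{k+1,\Omega_h})$ estimate in $H^{-\frac12}(\Gamma_h)$ for this extrapolated interpolation error; the latter follows from elementwise duality against polynomial test spaces on slightly enlarged, still shape-regular neighborhoods of each boundary element, whose regularity is preserved under $\deltah=O(h^2)$. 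The pairing then yields the desired $O(h^k)\|v\|_{1,\Omega_h}$ bound. Assembling all estimates produces $\|v\|_{1,\Omega_h}\le Ch^k(\|u\|_{k+1,\Omega}+\|f\|_{k-1,\Omega})$, and the triangle inequality closes the proof.
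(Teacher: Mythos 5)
Your overall architecture (Strang-type splitting into interpolation error, consistency error, and an approximation term handled through coercivity) is different from the paper's, and most of it is sound: the consistency analysis via integration by parts, the collapse of the volume residual to the sliver $\Omega_h\setminus\Omega$ (this is exactly what Lemma \ref{lem:ext error} provides), and the bulk approximation term are all fine. The proof breaks, however, at the one place you flag as the main obstacle: the boundary approximation term $\scalh\big\langle T^k_h(\Pi_h\tu-\tu)|_{\etab(\xib)},v\big\rangle_{\Gamma_h}$. Your repair rests on a claimed $O(h^{k+1}|u|_{k+1})$ bound for the (extrapolated) interpolation error in $H^{-1/2}(\Gamma_h)$, justified by ``elementwise duality against polynomial test spaces.'' That mechanism requires the error to be orthogonal to polynomials on each boundary face, which the Lagrange (or Scott--Zhang) interpolation error is not; without such orthogonality a negative norm buys nothing over the $L^2(\Gamma_h)$ norm. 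Under the stated regularity $\tu\in H^{k+1}(\mathbb R^\DDD)$ the facewise trace of $\tu$ lies only in $H^{k+1/2}(\Gamma_h)$, so the trace of the interpolation error is generically only $O(h^{k+1/2})$ in $L^2(\Gamma_h)$ (the paper needs the extra hypothesis $\tu\in H^{k+3/2}$ to reach $O(h^{k+1})$ in \eqref{eqn: essential condition error}, and even then only in $L^2$, not via duality). A secondary problem: the displacement sub-term $T^k_h(e_I)|_{\etab(\xib)}-e_I(\xib)$ is controlled by Lemma \ref{lm: Taylor approximation 2} only at order $\deltah^{1/2}h^k$, which after multiplication by $\scalh\sim h^{-1}$ is insufficient under the hypothesis $\deltah\sim o(h^{3/2})$ allowed for $d=2$. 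So as written the argument delivers at best $O(h^{k-1/2})$, the very loss you set out to avoid.

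The paper closes this gap not by sharpening the boundary residual but by sharpening the \emph{stability} constant in front of it. Instead of pairing through coercivity of $B^{\scal}_{h,D}$ (which forces the weight $\scalh\sim h^{-1}$ onto the boundary term), it proves Lemma \ref{lm: stability for error bound}: using the equivalent split form \eqref{eqn: PEF Dirichlet problem}, the boundary equation is solved for the trace, $u_h|_{\Gamma_h}=\pi_h\big[g_D\circ\etab(\xib)-T^{1,k}_h(u_h)|_{\etab(\xib)}\big]$, so that $\|u_h\|_{0,\Gamma_h}\le C\|g_D\circ\etab(\xib)\|_{0,\Gamma_h}+o(1)|u_h|_{1,\Omega_h}$ by Lemmas \ref{continuity of projection} and \ref{lm: Taylor discrete bound}; combining this with the interior equation through Young's and Friedrichs' inequalities makes the boundary datum enter as $h^{-1}\|g\|_{0,\Gamma_h}^2$ inside a square, i.e.\ only $h^{-1/2}\|g\|_{0,\Gamma_h}$ at the linear level. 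Applying that stability bound to the error equation for $w_h=u_h-u_I$, the boundary residual $g_D\circ\etab(\xib)-T^k_h(u_I)|_{\etab(\xib)}$ need only be $O(h^{k+1/2})$ in $L^2(\Gamma_h)$, which follows from Lemma \ref{lm: Taylor approximation} (the Taylor remainder of $\tu$), Lemma \ref{lm: Taylor approximation 2} (the displacement of $e_I$ from $\xib$ to $\etab(\xib)$), and the multiplicative trace inequality \eqref{eqn: Necas trace inequality} for $\|e_I\|_{0,\Gamma_h}$. To rescue your proposal you would need to import this idea: either prove the analogous $h^{-1/2}$-weighted stability estimate before pairing, or supply a genuine orthogonality/superconvergence property for the boundary residual, which the Lagrange interpolant does not possess.
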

The proof is given in \S\ref{sec:aeeed}.


\begin{theorem}[{\textbf{$L^2(\Omega_h)$-norm error estimate for the Dirichlet PE-FEM approximation}}]\label{theorem: Dirichlet PE-FEM L2}
Assume that $\deltah \sim O(h^2)$, {$f \in H^{k}(\Omega)$}, and that the hypotheses of Theorem  \ref{theorem: Dirichlet weak coercivity} holds. Let $u_h \in V_h^k$ denote the solution of \eqref{eqn: PEF Dirichlet coercive problem}, {$u \in H^{k+\frac32}(\Omega)$ the solution to \eqref{eqn: weak Dirichlet}, and $\tu \in H^{k+\frac32}(\mathbb R^\DDD)$ the extension of the latter}. Then,
\begin{equation} \label{eqn: DIrichlet L2 bound}
\|\tu - u_h\|_{0,\Omega_h} \le C h^{k+s} (\|u\|_{k+1, \Omega} {+ |\widetilde u|_{k+1, \Gamma_h} } + \|f\|_{k,\Omega}),\quad  \text{for } k=2,3\ldots 
\end{equation}
where $s \in (\frac12, 1]$ is a constant dependent on the largest interior angle of $\partial \Omega_h$. Given $h$ is small enough. Additionally, if $\partial \Omega_h$ is a convex polytope, we have that \eqref{eqn: DIrichlet L2 bound} holds with $s = 1$.
\end{theorem}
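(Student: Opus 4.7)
The plan is to execute an Aubin--Nitsche duality argument adapted to the PE-FEM boundary treatment. Let $e_h := \tu - u_h$. I would introduce the dual problem on the polytope $\Omega_h$: find $w$ solving $-\nabla\cdot(\widetilde p\nabla w) = e_h$ in $\Omega_h$ with $w = 0$ on $\Gamma_h$. By Grisvard-type elliptic regularity on polytopal domains, $\|w\|_{1+s,\Omega_h}\le C\|e_h\|_{0,\Omega_h}$ where $s \in (1/2, 1]$ is controlled by the largest interior angle of $\Gamma_h$ and equals $1$ in the convex case, precisely the $s$ appearing in the statement. Multiplying the dual equation by $e_h$ and integrating by parts yields
\begin{equation*}
\|e_h\|_{0,\Omega_h}^2 = {\WTD}(e_h, w) - \langle \widetilde p \nabla w\cdot \mathbf n_h, e_h\rangle_{\Gamma_h}.
\end{equation*}

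Next I would pick a Scott--Zhang-type interpolant $\pi_h w\in V_h^k$ satisfying $\|w-\pi_h w\|_{j,\Omega_h}\le C h^{1+s-j}\|w\|_{1+s,\Omega_h}$ for $j=0,1$, and split ${\WTD}(e_h,w) = {\WTD}(e_h, w-\pi_h w) + {\WTD}(e_h, \pi_h w)$. The first summand is bounded by Cauchy--Schwarz, the interpolation estimate, and the $H^1$ bound from Theorem~\ref{theorem: Dirichlet PE-FEM H1}, contributing at most $Ch^{k+s}\|e_h\|_{0,\Omega_h}$. The second summand is rewritten using the PE-FEM error equation obtained by subtracting \eqref{eqn: PEF Dirichlet coercive problem} from the analogous identity for $\tu$: the consistency error splits into a volume part supported in $\Omega_h\setminus\Omega$ (a region of measure $O(\deltah) = O(h^2)$, handled by H\"older and the extension stability of $\tu$) and a boundary part of the form $\scalh\langle R^k_h(\tu)|_{\etab(\xib)}, \pi_h w\rangle_{\Gamma_h}$ controlled by the averaged-Taylor remainder estimates of Section~\ref{sec: Taylor approximation}. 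Under $\deltah\sim O(h^2)$ both pieces again contribute at most $O(h^{k+s})\|e_h\|_{0,\Omega_h}$.

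The boundary term $\langle \widetilde p\nabla w\cdot\mathbf n_h, e_h\rangle_{\Gamma_h}$ is the delicate piece. My plan is to exploit the discrete Dirichlet constraint: by \eqref{eqn: PEF Dirichlet problem}, $T^k_h u_h|_{\etab(\xib)}$ weakly matches $g_D\circ\etab$ against $W_h^k$, so combined with \eqref{eqn: Dirichlet representation} I can rewrite $e_h|_{\Gamma_h}$ as the sum of a polynomial correction $e_h - T^k_h e_h|_{\etab(\xib)}$, which carries explicit $(\etab(\xib)-\xib)^{\alphab}$ factors of order $\deltah^{|\alphab|}$, plus the Taylor remainder $R^k_h(\tu)|_{\etab(\xib)}$. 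Pairing these with the trace $\widetilde p\nabla w\cdot\mathbf n_h$, which is bounded in $L^2(\Gamma_h)$ precisely because $s > 1/2$, and invoking $\|w\|_{1+s,\Omega_h}\le C\|e_h\|_{0,\Omega_h}$, yields the required $O(h^{k+s})\|e_h\|_{0,\Omega_h}$ contribution provided $\deltah\sim O(h^2)$; the $|\tu|_{k+1,\Gamma_h}$ term in the final bound is exactly the boundary Taylor remainder contribution.

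The main obstacle will be orchestrating the cancellation between ${\WTD}(e_h,\pi_h w)$ and the explicit boundary term so that no half-power of $h$ is lost to a naive trace inequality; this is why the sharper hypothesis $\deltah\sim O(h^2)$ is imposed here, as opposed to the weaker $o(h^{3/2})$ used in the $H^1$ analysis. Once the cancellation is implemented carefully, summing the three contributions and dividing by $\|e_h\|_{0,\Omega_h}$ delivers $\|e_h\|_{0,\Omega_h}\le C h^{k+s}(\|u\|_{k+1,\Omega}+|\tu|_{k+1,\Gamma_h}+\|f\|_{k,\Omega})$, with the convex-polytope case being automatic since $s=1$ and the reentrant-corner case leveraging the restriction $s\in(1/2,1]$ inherited from the dual regularity.
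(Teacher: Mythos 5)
Your plan is a viable Aubin--Nitsche argument and reaches the same conclusion, but it is organized differently from the paper's proof. The paper does not use the full error $e_h=\tu-u_h$ as the dual datum; instead it constructs an auxiliary corrector $\phi_h\in V^k_h$ supported only on boundary elements with $\phi_h|_{\Gamma_h}=w_h$ (the trace of the discrete error $u_h-u_I$), so that $u_h^0:=u_h-u_I-\phi_h\in V^k_{h,0}$, and then runs the duality argument for $u_h^0$ with a homogeneous-Dirichlet dual problem; all boundary effects are channeled through Green's identity applied to $D_h(u_I-\tu+\phi_h,\psi)$, producing the term $\langle u_I-\tu+\phi_h,\widetilde p\,\nabla\psi\cdot\nn_h\rangle_{\Gamma_h}$. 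You instead keep $e_h$ as the dual datum and confront the flux term $\langle\widetilde p\,\nabla w\cdot\nn_h,e_h\rangle_{\Gamma_h}$ head-on; this avoids constructing $\phi_h$ and is arguably cleaner, at the cost of needing the trace $\nabla w\cdot\nn_h\in L^2(\Gamma_h)$ directly (fine, since $s>\tfrac12$). Crucially, both versions stand or fall on the same estimate: the boundary trace of the error must be $O(h^{k+1})$, one full power better than what a naive trace inequality applied to the $H^1$ error gives. This is exactly the content of the paper's Lemma \ref{lm: perturbation trace bound} (plus $\|\tu-u_I\|_{0,\Gamma_h}\le Ch^{k+1}|\tu|_{k+1,\Gamma_h}$, which is where the $|\tu|_{k+1,\Gamma_h}$ term in \eqref{eqn: DIrichlet L2 bound} actually originates --- it is the boundary interpolation error, not the Taylor remainder as you suggest), obtained from the weak constraint $u_h|_{\Gamma_h}=\pi_h[g_D\circ\etab-T^{1,k}_h(u_h)|_{\etab}]$ together with Lemmas \ref{lm: Taylor approximation}, \ref{lm: Taylor approximation 2}, and \ref{lm: Taylor discrete bound} under $\deltah\sim O(h^2)$. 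Once that trace bound is in hand, no ``cancellation'' between $\WTD(e_h,\pi_h w)$ and the boundary term is needed or occurs: the flux term is bounded directly by $\|\nabla w\|_{0,\Gamma_h}\|e_h\|_{0,\Gamma_h}\le Ch^{k+1}\|e_h\|_{0,\Omega_h}$, which already beats $h^{k+s}$. Two small corrections to your outline: if the quasi-interpolant is chosen in $V^k_{h,0}$ (as it should be, since $w=0$ on $\Gamma_h$), the boundary piece $\scalh\langle R^k_h(\tu)|_{\etab},\pi_h w\rangle_{\Gamma_h}$ of the consistency error vanishes identically, leaving only the volume mismatch $\langle\widehat f-\widetilde f,\pi_h w\rangle_{\Omega_h}$ over $\Omega_h\setminus\Omega$ (Lemma \ref{lem:ext error}, which is where $f\in H^k(\Omega)$ is used); and the $H^1$ bound you invoke for $\WTD(e_h,w-\pi_h w)$ is Theorem \ref{theorem: Dirichlet PE-FEM H1}, whose hypotheses are covered by $\deltah\sim O(h^2)$.
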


The proof is given in \S\ref{sec:aeeed2}.


\begin{theorem}[{\textbf{$H^1(\Omega_h)$-norm error estimate for the Neumann PE-FEM approximation}}]\label{theorem: Neumann PE-FEM H1}
Assume that  $f \in H^{k-1}(\Omega)$,  $g_N \in H^{k-\frac12}(\Gamma_h)$, and that the hypotheses of Theorem \ref{theorem: Neumann well--posednessn} hold.  Let $u_h \in V_h^k$ denote the solution of \eqref{eqn: PEF Neumann problem}, $u \in H^{k+1}(\Omega)$ the solution to \eqref{eqn: weak Neumann}, and $\tu \in H^{k+1}(\mathbb R^\DDD)$ the extension of the latter. Then, if $\deltah \sim O(h^2)$ the we have the bound
$$
\|\tu - u_h\|_{1,\Omega_h} \le C h^{k} (\|\tu\|_{k+1} + \|f\|_{k-1,\Omega})\quad \text{for } {k=2,3,\ldots}. 
$$
\end{theorem}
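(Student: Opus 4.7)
My plan is to carry out a Strang-type error analysis built on the coercivity estimate \eqref{eqn: Neumann coercivity}, the enriched continuity bound \eqref{eqn: Neumann continuity bound in H1}, and a careful treatment of the consistency residual $\mathcal E(v):=B_{h,N}(\tu,v)-F_{h,N}(v)$. First I would pick a standard quasi-interpolant $I_h\tu\in V^k_h$ satisfying \eqref{eqn: best approximation bound}, set $e_h:=u_h-I_h\tu\in V_h^k$, and use coercivity together with the Galerkin identity $B_{h,N}(u_h,e_h)=F_{h,N}(e_h)$ to write
\[
\gamma_N\|e_h\|_{1,\Omega_h}^2\;\le\;B_{h,N}(e_h,e_h)\;=\;B_{h,N}(\tu-I_h\tu,e_h)\;-\;\mathcal E(e_h).
\]
Once both right-hand terms are shown to be $O(h^k)\|e_h\|_{1,\Omega_h}$, the triangle inequality $\|\tu-u_h\|_{1,\Omega_h}\le\|\tu-I_h\tu\|_{1,\Omega_h}+\|e_h\|_{1,\Omega_h}$ combined with the extension bounds $\|\tu\|_{k+1,\Omega\cup\Omega_h}\le C_e\|u\|_{k+1,\Omega}$ and $\|\widetilde f\|_{k-1}\le C_e\|f\|_{k-1,\Omega}$ delivers the stated estimate.

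For the interpolation piece I would apply \eqref{eqn: Neumann continuity bound in H1} with $u=\tu-I_h\tu$. Since $I_h\tu$ is piecewise a polynomial of degree $k$, $\trino\tu-I_h\tu\trino_{k+1,\Omega_h}=\trino\tu\trino_{k+1,\Omega_h}$, and for $s\le k$ the interpolation estimate gives $\trino\tu-I_h\tu\trino_{s,\Omega_h}\le C h^{k+1-s}|\tu|_{k+1,\Omega_h}$. Substituting $\delta_h\sim O(h^2)$ into the four contributions of \eqref{eqn: Neumann continuity bound in H1} one checks that every term is dominated by $Ch^k\|\tu\|_{k+1}\|e_h\|_{1,\Omega_h}$.

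For $\mathcal E(e_h)$, elementwise integration by parts in $\WTN(\tu,v)$ cancels the $\widetilde p\,\nabla\tu\cdot\nn_h$ terms coming from $\boldsymbol\tau_N$, and the exact identity \eqref{eqn: Neumann representation} rewrites $g_N\circ\etab$ in terms of the averaged Taylor extension; after these manipulations,
\[
\mathcal E(v)=\int_{\Omega_h\setminus\Omega}\!\bigl[-\nabla\!\cdot\!(\widetilde p\,\nabla\tu)+\widetilde q\,\tu-\widetilde f\bigr]\,v\,d\xb\;-\;\bigl\langle\widetilde p\circ\etab\,\mathbf R^{k-1}_h(\nabla\tu)\cdot\mathbf n,\,v\bigr\rangle_{\Gamma_h},
\]
because the strong form of \eqref{contprobn} annihilates the volume integrand on $\Omega\cap\Omega_h$. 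The boundary term is immediate: the Taylor remainder bound (c.f.\ Lemma \ref{lm: Taylor approximation}) yields $\|\mathbf R^{k-1}_h(\nabla\tu)\|_{0,\Gamma_h}\le C\delta_h^{k-\frac12}|\tu|_{k+1}$, and a trace inequality reduces the whole contribution to $O(h^{2k-1})\|v\|_{1,\Omega_h}$, which is well within the $O(h^k)$ budget.

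The main obstacle I anticipate is the volume residual $r:=-\nabla\!\cdot\!(\widetilde p\,\nabla\tu)+\widetilde q\,\tu-\widetilde f$ on the thin strip $\Omega_h\setminus\Omega$ of width $O(\delta_h)=O(h^2)$. The essential observation is that $r\equiv 0$ on $\Omega$ because $u$ solves \eqref{contprobn} strongly there, so $r\in H^{k-1}(\Omega\cup\Omega_h)$ has vanishing trace on $\Gamma$ together with all its normal and tangential derivatives up to order $k-2$. An iterated Poincar\'e (or Hardy-type) inequality in a tubular neighbourhood of $\Gamma$ then gives $\|r\|_{0,\Omega_h\setminus\Omega}\le C\delta_h^{k-1}\bigl(\|\tu\|_{k+1,\Omega}+\|\widetilde f\|_{k-1,\Omega}\bigr)$, while the standard thin-strip estimate $\|v\|_{0,\Omega_h\setminus\Omega}\le C\delta_h^{\frac12}\|v\|_{1,\Omega_h}$ handles the test function. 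Multiplied out with $\delta_h\sim O(h^2)$, the volume residual contributes $O(h^{2k-1})\|v\|_{1,\Omega_h}$, which is dominated by $O(h^k)\|v\|_{1,\Omega_h}$ for $k\ge 2$. Making the iterated strip inequality rigorous, with a constant independent of $h$ over shape-regular meshes and with the tubular coordinates covering $\Omega_h\setminus\Omega$ with bounded overlap, is the most delicate step, and is the natural place to defer to the appendix.
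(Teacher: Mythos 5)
Your proposal follows essentially the same route as the paper's proof in \S\ref{sec:aeeen}: a Strang-type argument with the Lagrange interpolant, the continuity bound \eqref{eqn: Neumann continuity bound in H1} for the interpolation piece, and a consistency residual that splits into the volume mismatch $\langle \widehat f - \widetilde f, v\rangle_{\Omega_h}$ (your $r$ on the strip) plus the Taylor-remainder boundary term, each of order better than $h^k$ when $\deltah \sim O(h^2)$. The one step you defer as delicate --- the iterated Poincar\'e/Hardy bound for the volume residual on the strip --- is exactly what the paper already supplies as Lemmas \ref{lem:ext} and \ref{lem:ext error}, which treat $\widehat f$ and $\widetilde f$ as two bounded extensions of $f\in H^{k-1}(\Omega)$ and use Bramble--Hilbert on the star-shaped sets $S^{i,\mmm}$ together with a H\"older/Sobolev-embedding argument for the test function, so no new tubular-coordinate machinery is needed.
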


The proof is given in \S\ref{sec:aeeen}.

\begin{remark}
Although our results do not include optimal $L^2$-norm error estimates for the Neumann problem, numerical results given in Section \ref{section: results} suggest that the PE-FEM formulation is optimally accurate in this case as well.
\end{remark}

\section{Implementation} \label{section: implementation}

The conversion of any finite element code into a PE-FEM code is a relatively simple matter as it only requires coding the additional terms in \eqref{eqn: PEF Dirichlet problem0d} and \eqref{eqn: Neumann bilinear form0} which are relatively minor variations of the standard assembly process on each element.


\subsection{The mapping $\eta(\xi)$}

Whereas different choices of mappings $\etab: \Gamma_h\to\Gamma$ are possible, in our numerical experiments we use the mapping
\begin{equation}\label{eqn: nearest neighbor}
	\etab(\xib) := \underset{{\bf x}\in \Gamma}{\arg\min} |{\bf x} - \xib|
\end{equation}
that guarantees that $|\etab(\xib) - \xib| = O(h^2)$. 
In the implementation, we apply the mapping to $\xib$ belonging to the set of nodes or quadrature points lying on $\Gamma_h$. 


\subsection{Implementation of the PE-FEM Dirichlet problem}

In \eqref{eqn: PEF Dirichlet problem0d}, the essential boundary condition is imposed on the piecewise polynomial extensions in a weak, i.e., variational, sense. In order to compute the boundary integrals using quadrature rules, one can compute  the term $\left( E_{\KKK_{j_i}} (u_h) \circ \etab(\xib_q) \right)$ at quadrature points $\xib_q\in {\mathcal E}_i$ by evaluating the polynomial basis functions that generate $u_h|_{\KKK_{j_i}}$ at $\etab(\xib_q)$.

Alternatively, for two-dimensional Type A meshes, it is possible to prescribe the Dirichlet condition in a strong sense
$$
E_{\KKK_{j_i}} (u_h) \circ \etab(\xib_i) = g_D\circ\etab(\xib_i),
$$
for all nodes $\xib_i \in \Gamma_h$ associated with the degrees of freedom of $W_h^k$. Here, $\KKK_{j_i}$ denotes the element whose closure contains $\xib_i$. The element $\KKK_{j_i}$ is not uniquely defined if $\xib_i$ is a vertex of the triangulation. However, in this case, for two-dimensional Type A meshes we have that $\xib_i \in \Gamma \cap \Gamma_h$ and therefore the extension operators reduce to the identity operator.


\subsection{Implementation of the PE-FEM Neumann problem}

The PE-FEM Neumann problem can be obtained by adding the term \eqref{eqn: Neumann bilinear form0}
\begin{equation*}
	\sum_i \Big\langle  \widetilde p\circ\etab(\xib)\;   \nabla \left( E_{\KKK_{j_i}} (u_h) \circ \etab(\xib) \right) \cdot \mathbf  n -  \widetilde p\,\nabla u_h \cdot \nn_h, v \Big\rangle_{\EEE_i} 
\end{equation*}
to a standard finite element implementation, and by evaluating the Neumann data $g_N$ at the points on the true boundary $\Gamma$ using the mapping $\etab$.
In particular, when computing the integrals using quadrature rules, one can compute  the term $\nabla \left( E_{\KKK_{j_i}} (u_h) \circ \etab(\xib_q) \right)$ at quadrature points by evaluating the gradient of the polynomial basis functions that generates $u_h|_{\KKK_{j_i}}$ at $\etab(\xib_q)$ and can compute the right hand side by evaluating $g_N$ at $\etab(\xib_q)$. The outer unit normal vectors  $\nn_h$ at $\xib_q$ and $\nn$ at $\etab(\xib_q)$ need to be computed as well.


\section{Numerical Examples} \label{section: results}
In this section, we present illustrative numerical results for the Dirichlet and Neumann PE-FEM methods for both a convex and a non-convex domain. 

\subsection{Convex domain}
The convex domain $\Omega$ considered is the unit circle centered at $(0, 0)$ having radius $1$. The coefficients are given by $p(x)=q(x)=1$ and the right-hand side $f$ is determined so that the exact solution is given by $u = \cos(x) \cos(y)$. A sample PE-FEM approximate solution is plotted in Figure \ref{solution} (left). We report on the convergence history for the PE-FEM method in Table \ref{table 1}. We observe that optimal $H^1(\Omega_h)$ and $L^2(\Omega_h)$ convergence rates are achieved in all cases.

\begin{figure}[h!]
\centerline{
	\includegraphics[width = 0.35\textwidth]{./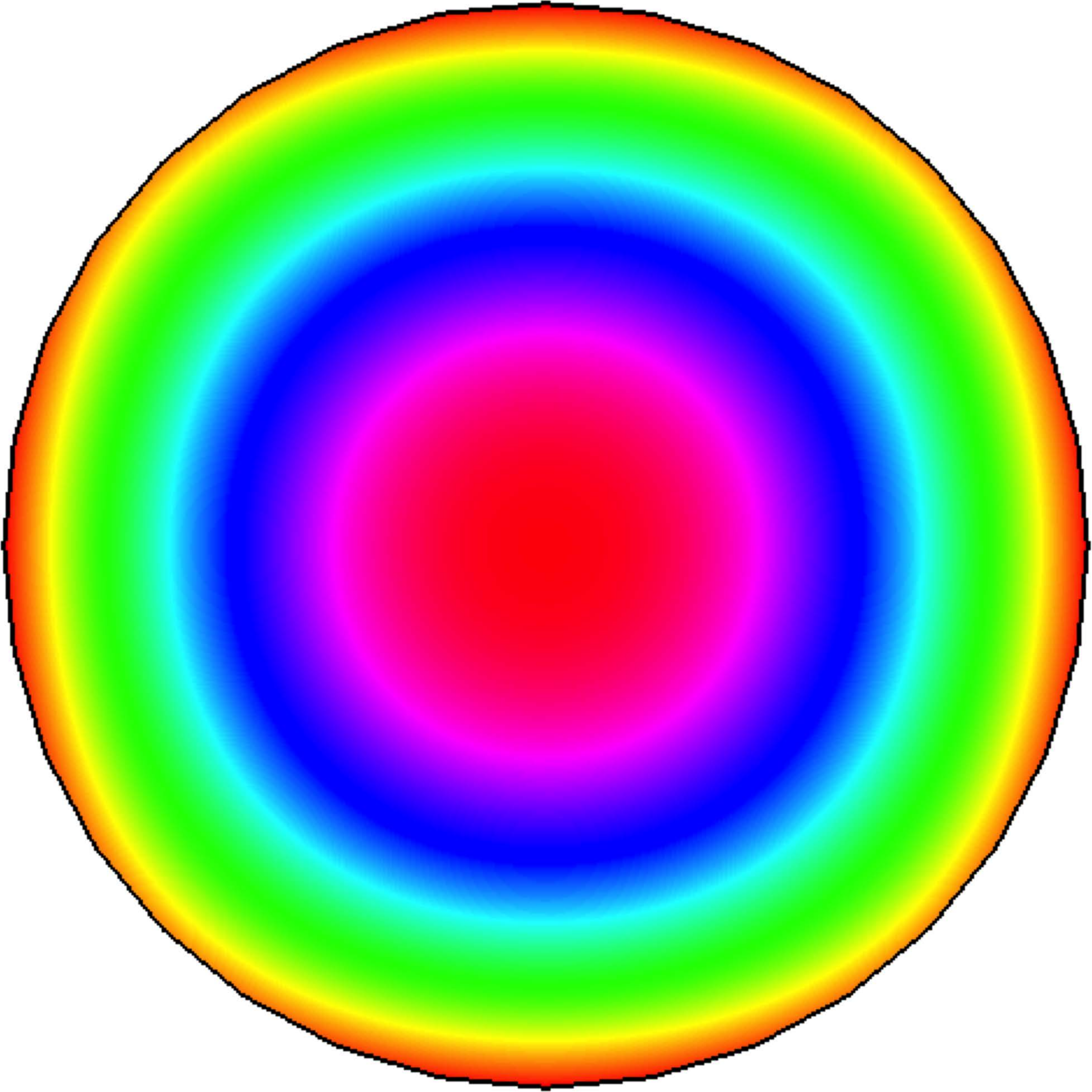}\qquad
	\includegraphics[width = 0.35\textwidth]{./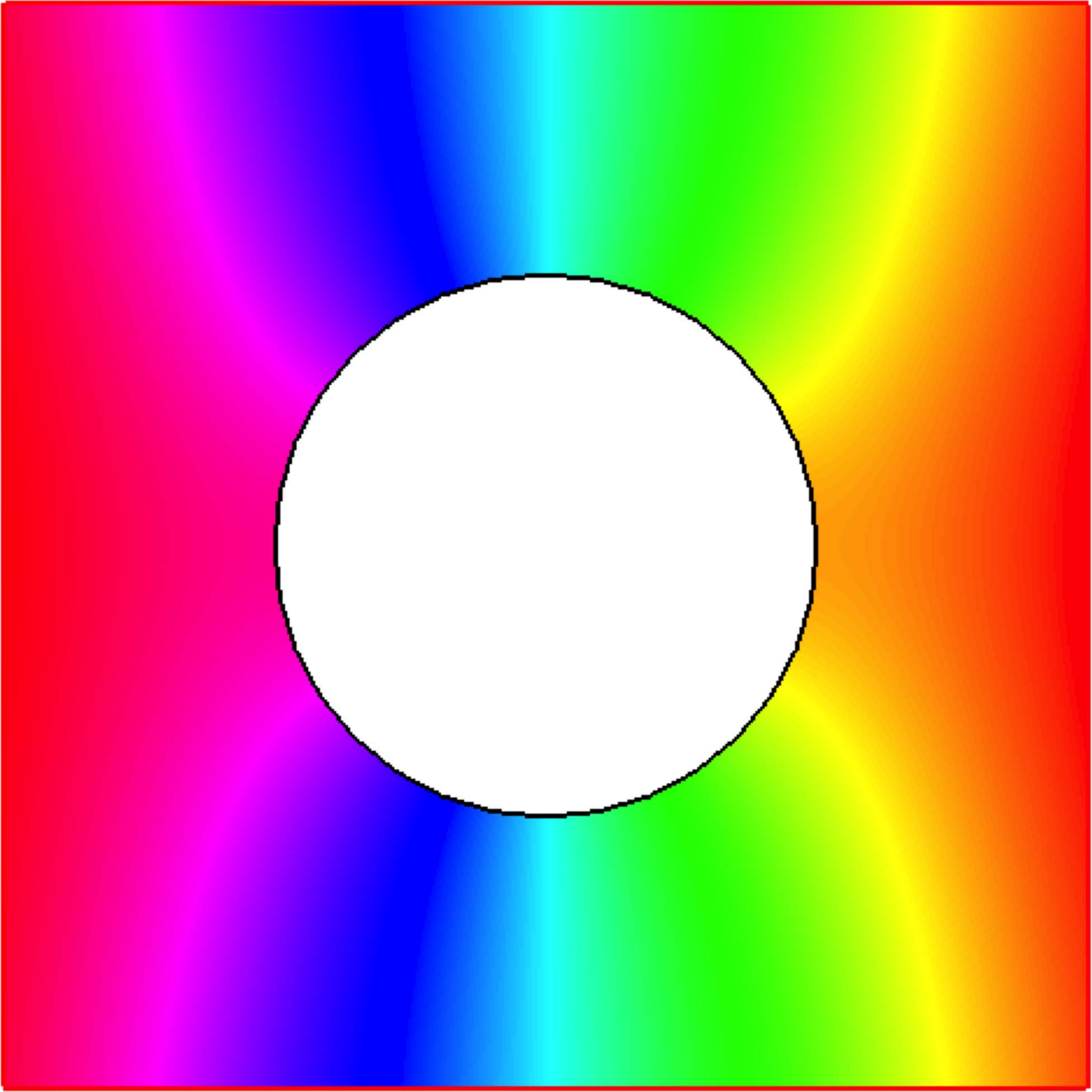}
	}
	\caption{Numerical solutions computed by the PE-FEM method. Left: solution of the circular domain problem computed using the Neumann PE-FEM method with cubic elements. Right: solution for the nonconvex domain computed using the Dirichlet PE-FEM method with cubic elements.}\label{solution}
\end{figure}

\begin{table}
\caption{Convergence histories for the PE-FEM Dirichlet and Neumann PE-FEM approximations for the convex domain example. Optimal convergence rates are achieved with respect to both the $L^2(\Omega_h)$ and $H^1(\Omega_h)$ norms.}\label{table 1} 
\begin{center}
\begin{tabular}{|c||c|c||c|c|}
  \hline
  \multicolumn{5}{|c|}{Quadratic elements ($k=2$)}\\
  \hline
 & \multicolumn{2}{|c||}{Dirichlet} & \multicolumn{2}{|c|}{Neumann} \\
  \cline{2-5}
   $h$  & $\| u - u_h \|_{0, \Omega_h}$   & $\| u - u_h \|_{1, \Omega_h}$ 
    & $\| u - u_h \|_{0, \Omega_h}$   & $\| u - u_h \|_{1, \Omega_h}$ \\
   \hline
0.583095 & 6.83996e-04 & 1.20924e-02 & 8.71677e-03 & 1.18623e-02 \\
0.315543 & 8.71107e-05 & 2.96301e-03 & 1.29589e-03 & 2.93169e-03 \\
0.165152 & 1.07759e-05 & 7.21901e-04 & 1.55799e-04 & 7.11529e-04 \\
0.080322 & 1.28123e-06 & 1.81096e-04 & 1.99808e-05 & 1.79352e-04 \\
0.045221 & 1.59731e-07 & 4.41786e-05 & 2.22425e-06 & 4.39835e-05 \\
   \hline
   Rate &3.2283     &  2.1605  & 3.1932 & 2.1558\\
   \hline
\end{tabular}
\vskip2pt
\begin{tabular}{|c||c|c||c|c|}
  \hline
  \multicolumn{5}{|c|}{Cubic elements ($k=3$)}\\
  \hline
   & \multicolumn{2}{|c||}{Dirichlet} & \multicolumn{2}{|c|}{Neumann} \\
 \cline{2-5}
   $h$ & $\| u - u_h \|_{0, \Omega_h}$   & $\| u - u_h \|_{1, \Omega_h}$ 
    & $\| u - u_h \|_{0, \Omega_h}$   & $\| u - u_h \|_{1, \Omega_h}$ \\
   \hline
0.583095 & 3.11001e-05 & 7.19118e-04 & 2.74664e-04 & 7.11022e-04 \\ 
0.315543 & 1.67332e-06 & 7.63497e-05 & 1.96586e-05 & 7.61091e-05 \\
0.165152 & 1.06597e-07 & 1.00175e-05 & 9.82364e-07 & 9.95999e-06 \\
0.080322 & 6.87903e-09 & 1.34472e-06 & 6.63858e-08 & 1.34119e-06 \\
0.045221 & 4.33984e-10 & 1.66115e-07 & 2.73366e-09 & 1.65852e-07 \\
   \hline
   Rate & 4.2922     & 3.2024 & 4.4254 &  3.1993\\
   \hline
\end{tabular}
\vskip2pt
\begin{tabular}{|c||c|c||c|c|}
  \hline
  \multicolumn{5}{|c|}{Quartic elements ($k=4$)}\\
    \hline
   & \multicolumn{2}{|c||}{Dirichlet} & \multicolumn{2}{|c|}{Neumann} \\
\cline{2-5}
   $h$                   & $\| u - u_h \|_{0, \Omega_h}$   & $\| u - u_h \|_{1, \Omega_h}$ 
   & $\| u - u_h \|_{0, \Omega_h}$   & $\| u - u_h \|_{1, \Omega_h}$ \\
   \hline
0.583095 & 6.02698e-07 & 2.36345e-05 & 3.0114e-05 & 2.57857e-05 \\
0.315543 & 2.24273e-08 & 1.41265e-06 & 1.72058e-06 & 1.59270e-06 \\
0.165152 & 6.36060e-10 & 8.36391e-08 & 4.75739e-08 & 8.54912e-08 \\
0.080322 & 1.73323e-11 & 5.12435e-09 & 1.41032e-09 & 5.11654e-09 \\
   \hline
   Rate & 5.2938     & 4.2606 & 5.0798 & 4.3162\\
   \hline
\end{tabular}
\end{center}
\end{table}

\subsection{Nonconvex domain}

The non-convex domain $\Omega$ considered is the region within the unit square $[-0.5, 0.5]^2$ and outside of the circle of radius $\frac{1}{4}$ centered at (0,0). The coefficients are given by $p(x)=q(x)=1$ and the right-hand side $f$ is determined so that the exact solution is given by  $u = -\frac{17}{16}\frac{x}{x^2 + y^2}$. We use the given Dirichlet boundary conditions on the outer boundary of the square because no special treatment is required for applying boundary conditions along straight edges. The PE-FEM conditions are utilized on the interior circular boundary. A sample PE-FEM approximate solution is plotted in Figure \ref{solution} (right).  The convergence history for this numerical experiment is reported on in Table \ref{table 3}. We again observe that optimal $H^1(\Omega_h)$ and $L^2(\Omega_h)$ convergence rates are achieved in all cases.

\begin{table}
\caption{Convergence histories for PE-FEM Dirichlet and Neumann PE-FEM approximations for the non-convex domain example. Optimal convergence rates are achieved with respect to both the $L^2(\Omega_h)$ and $H^1(\Omega_h)$ norms.}\label{table 3}
\begin{center}
\begin{tabular}{|c||c|c||c|c|}
  \hline
  \multicolumn{5}{|c|}{Quadratic elements ($k=2$)}\\
  \hline
 & \multicolumn{2}{|c||}{Dirichlet} & \multicolumn{2}{|c|}{Neumann} \\
  \cline{2-5}
   $h$  & $\| u - u_h \|_{0, \Omega_h}$   & $\| u - u_h \|_{1, \Omega_h}$ 
    & $\| u - u_h \|_{0, \Omega_h}$   & $\| u - u_h \|_{1, \Omega_h}$ \\
   \hline
0.353553 & 6.39714e-03 & 1.80907e-01 & 1.57225e-02 & 2.15036e-01  \\
0.241519 & 1.09194e-03 & 5.26226e-02 & 2.95796e-03 & 5.63867e-02 \\ 
0.136382 & 1.53631e-04 & 1.4597e-02 & 4.05368e-04 & 1.44241e-02  \\
0.0714398 & 2.23882e-05 & 3.99222e-03 & 5.7218e-05 & 3.91765e-03 \\ 
0.0383488 & 2.88218e-06 & 1.00481e-03 & 8.15548e-06 & 9.91063e-04 \\ 
   \hline
   Rate &  3.3877    & 2.2749  & 3.3502  & 2.3520\\
   \hline
\end{tabular}
\vskip2pt
\begin{tabular}{|c||c|c||c|c|}
  \hline
  \multicolumn{5}{|c|}{Cubic elements ($k=3$)}\\
  \hline
 & \multicolumn{2}{|c||}{Dirichlet} & \multicolumn{2}{|c|}{Neumann} \\
  \cline{2-5}
   $h$  & $\| u - u_h \|_{0, \Omega_h}$   & $\| u - u_h \|_{1, \Omega_h}$ 
    & $\| u - u_h \|_{0, \Omega_h}$   & $\| u - u_h \|_{1, \Omega_h}$ \\
   \hline
0.353553 & 2.22751e-03 & 8.49656e-02 & 9.25961e-03 & 1.23404e-01 \\
0.241519 & 1.77764e-04 & 1.31537e-02 & 1.31772e-03 & 1.73723e-02 \\
0.136382 & 1.41219e-05 & 1.96634e-03 & 1.15194e-04 & 2.17373e-03 \\ 
0.0714398 & 1.14619e-06 & 2.87068e-04 & 7.63884e-06 & 2.90969e-04 \\
0.0383488 & 7.37505e-08 & 3.61339e-05 & 6.44114e-07 & 3.61279e-05 \\
   \hline
   Rate & 4.5017     & 3.3946  & 4.2777 & 3.5698\\
   \hline
\end{tabular}
\vskip2pt
\begin{tabular}{|c||c|c||c|c|}
  \hline
  \multicolumn{5}{|c|}{Quartic elements ($k=4$)}\\
  \hline
 & \multicolumn{2}{|c||}{Dirichlet} & \multicolumn{2}{|c|}{Neumann} \\
  \cline{2-5}
   $h$  & $\| u - u_h \|_{0, \Omega_h}$   & $\| u - u_h \|_{1, \Omega_h}$ 
    & $\| u - u_h \|_{0, \Omega_h}$   & $\| u - u_h \|_{1, \Omega_h}$ \\
   \hline
0.353553 & 7.14676e-04 & 4.08694e-02 & 2.03226e-03 & 5.66714e-02 \\ 
0.241519 & 3.26085e-05 & 3.23267e-03 & 2.85442e-04 & 4.66504e-03 \\ 
0.136382 & 1.77419e-06 & 2.59726e-04 & 1.71681e-05 & 3.17316e-04 \\
0.0714398 & 7.69598e-08 & 2.01136e-05 &  4.39592e-07 & 2.11306e-05 \\
0.0383488 & 2.43552e-09 & 1.27258e-06 & 2.27363e-08 & 1.29139e-06 \\
   \hline
   Rate & 5.4794   & 4.5304 &  5.1745 & 4.6992\\
   \hline
\end{tabular}
\end{center}
\end{table}

\section{Concluding remarks}\label{sec:conclusion}

In this paper, we have proposed a numerical method to determine optimally accurate finite element approximation of solutions of second-order elliptic boundary value problems based on polygonal approximations $\Omega_h$ of domains $\Omega$ having smooth curved boundaries. Polynomial extensions from the boundary of $\Omega_h$ to the boundary of $\Omega$ are instrumental in achieving optimal convergence rates. For Dirichlet boundary conditions, the stability and optimal convergence with respect to the $H^1(\Omega_h)$ and $L^2(\Omega_h)$ norms is proved whereas for Neumann boundary conditions, optimal convergence with respect to the $H^1(\Omega_h)$ norm is proved. Numerical tests are used to illustrate the theory as well as to show that optimal convergence rates are also achieved for errors measured in the $L^2(\Omega_h)$ norm, even for the case of Neumann boundary conditions.
 
In the future, we will explore applying this method to other equations and engineering benchmark problems. We will also consider using this approach as a mechanism for achieving higher-order accuracy for solutions of interface problems.

\nocite{*}

%
%

\appendix


\section{Analysis of Taylor polynomials on boundaries}

In this section, we provide technical lemmas pertaining to properties of the Taylor series extensions we have defined in \S\ref{sec: Taylor approximation}. 

\begin{lemma}\label{lm: Taylor bound}
Let $v\in L^2(\Omega_h)$. Then,
$$
\|  T^k_h (v) |_{\etab(\xib)} \|_{0, \Gamma_h} \le C \deltah^{-\frac12}  \|  v  \|_{0,\Omega_h}.
$$
\end{lemma}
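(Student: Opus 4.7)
The strategy is to rewrite the averaged Taylor polynomial as a single integral against a kernel of size $O(\deltah^{-\DDD})$, apply Cauchy--Schwarz for a pointwise bound, and then carefully track the $(\DDD-1)$-dimensional measure of the pieces of $\EEE_i$ that get mapped into each patch $S^{i,\mmm}$.

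First, following Brenner--Scott, I would rewrite \eqref{Taylor definition} by integration by parts (moving all derivatives off $v$ and onto the product $(\xb-\yb)^\alphab \phi_\mmm(\yb)$), obtaining a representation
\begin{equation*}
T^k_h(v)(\xb) \;=\; \sum_{i,\mmm} \mathbf 1_{S^{i,\mmm}}(\xb) \int_{\sigma^{i,\mmm}} v(\yb)\,K_{i,\mmm}(\xb,\yb)\,d\yb,
\end{equation*}
valid for $v\in L^2(\Omega\cap\Omega_h)$. Since $|\xb-\yb|\le C\deltah$ on $S^{i,\mmm}\times\sigma^{i,\mmm}$ and the cutoff $\phi_\mmm$ satisfies $|D^\beta \phi_\mmm|\le C\deltah^{-(\DDD+|\beta|)}$ (because $\mbox{radius}(\sigma^{i,\mmm})\sim\deltah$), a term-by-term count of the derivatives gives $|K_{i,\mmm}(\xb,\yb)|\le C\deltah^{-\DDD}$. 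Cauchy--Schwarz then yields the pointwise estimate
\begin{equation*}
\left|T^k_h(v)(\xb)\right|^2 \;\le\; |\sigma^{i,\mmm}|\,\|K_{i,\mmm}(\xb,\cdot)\|_{L^\infty}^2\,\|v\|_{0,\sigma^{i,\mmm}}^2 \;\le\; C\,\deltah^{-\DDD}\,\|v\|_{0,\sigma^{i,\mmm}}^2
\end{equation*}
for every $\xb\in S^{i,\mmm}$.

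Next, I would parametrize $\|T^k_h(v)|_{\etab(\xib)}\|_{0,\EEE_i}^2$ by splitting the face $\EEE_i$ according to which patch $S^{i,\mmm}$ contains $\etab(\xib)$. The mapping $\etab$ has bounded Jacobian and $|\etab(\xib)-\xib|\le\deltah$, while $\mbox{diam}(S^{i,\mmm})\le C\deltah$, so the subset
\[
\EEE_i^{\mmm} := \{\xib\in\EEE_i : \etab(\xib)\in S^{i,\mmm}\}
\]
has $(\DDD-1)$-measure $|\EEE_i^{\mmm}|\le C\deltah^{\DDD-1}$. Combining with the pointwise bound,
\begin{equation*}
\int_{\EEE_i^{\mmm}}\left|T^k_h(v)(\etab(\xib))\right|^2 d\xib \;\le\; C\deltah^{\DDD-1}\cdot\deltah^{-\DDD}\,\|v\|_{0,\sigma^{i,\mmm}}^2 \;=\; C\deltah^{-1}\,\|v\|_{0,\sigma^{i,\mmm}}^2.
\end{equation*}

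Finally, I would sum. Because the balls $\sigma^{i,\mmm}\subset \KKK_{j_i}$ are disjoint in $\mmm$, summation over $\mmm$ gives $\|T^k_h(v)|_{\etab(\xib)}\|_{0,\EEE_i}^2 \le C\deltah^{-1}\|v\|_{0,\KKK_{j_i}}^2$; then summation over $i$ (using that each element $\KKK_j$ is the ``$\KKK_{j_i}$'' for at most a bounded number of boundary faces) produces the claimed bound $\|T^k_h(v)|_{\etab(\xib)}\|_{0,\Gamma_h}\le C\deltah^{-1/2}\|v\|_{0,\Omega_h}$.

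The main obstacle is the first step: establishing the uniform kernel bound $|K_{i,\mmm}|\le C\deltah^{-\DDD}$ rigorously from the integration-by-parts representation, since the combinatorics of redistributing derivatives between $(\xb-\yb)^\alphab$ and $\phi_\mmm$ needs to be tracked to verify that every term has the same scaling (the $\deltah^{|\alphab|-|\gamma|}$ from differentiating the polynomial factor exactly cancels the $\deltah^{-|\alphab-\gamma|}$ growth from differentiating $\phi_\mmm$, leaving the pure $\deltah^{-\DDD}$ from the cutoff's normalization). Everything downstream is essentially geometric bookkeeping.
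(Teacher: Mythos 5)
Your proposal is correct and follows essentially the same route as the paper: decompose by the patches $S^{i,\mmm}$, bound the averaged Taylor polynomial in $L^\infty(S^{i,\mmm})$ by $C\deltah^{-\DDD}\|v\|_{L^1(\sigma^{i,\mmm})}$, use the $O(\deltah^{\DDD-1})$ surface measure of each boundary piece, and sum using disjointness of the $\sigma^{i,\mmm}$. The kernel bound you single out as the main obstacle is exactly the content of \cite[Corollary 4.1.15]{brenner2007mathematical} (combined with a scaling argument for the $\deltah$-sized balls), which the paper simply cites rather than re-derives.
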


\begin{proof}
We have that $T^k_h (v)$ is a polynomial on each element domain $S^{i,\mmm}$ so that the $L^{\infty}$ norm on $\etab(\EEE_i)$ is bounded by the $L^{\infty}$ norm in $S^{i,\mmm}$. It then follows that
\begin{equation*}
\begin{aligned}
 \|   T^k_h (v)|_{\etab(\xib)}  \|_{0, \EEE_i\cap S^{i,\mmm}}
&\le |\EEE_i \cap S^{i,\mmm}|^\frac12  \|   T^k_h (v) |_{\etab(\xib)}  \|_{L^\infty  (\EEE_i\cap S^{i,\mmm})} \\
&\le C \deltah^{\frac{d-1}{2}}  \|   T^k_h (v) |_{\etab}  \|_{L^\infty  (\etab (\EEE_i)\cap S^{i,\mmm})} 
\le C \deltah^{\frac{d-1}{2}}  \|  T^k_h (v)  \|_{L^\infty(S^{i,\mmm})} 
\end{aligned}
\end{equation*}
because $\textrm{diam}(S^{i,\mmm}) \leq C\delta_h$.
From \cite[Corollary 4.1.15]{brenner2007mathematical} and after using a scaling argument and noticing that diam$(\sigma^{i,\mmm}) \le C \deltah$, we have that $ \|  T^k_h (v)  \|_{L^\infty(S^{i,\mmm})} \le C\deltah^{-d} \| v\|_{L^1(\sigma^{i,\mmm})}$. It then follows that
\begin{equation*}
\begin{aligned}
 \|   T^k_h (v)|_{\etab(\xib)}  \|_{0, \EEE_i\cap S^{i,\mmm}} &\le C \deltah^{-\frac{d+1}{2}}  \|  v  \|_{L^1(\sigma^{i,\mmm})} \\
&\le C \deltah^{-\frac{d+1}{2}} |\sigma^{i,\mmm}|^{\frac12} \|  v \|_{0,\sigma^{i,\mmm}} 
\le C \deltah^{-\frac12} \|  v \|_{0,\sigma^{i,\mmm}} .
\end{aligned}
\end{equation*}
We conclude the proof by summing $ \|   T^k_h (v)|_{\etab(\xib)}  \|_{0, (\EEE_i\cap S^{i,\mmm})}^2$ over all $i, \mmm$ and noticing that the $\sigma^{i,\mmm}$ are pairwise disjoint.
\end{proof}

\begin{lemma} \label{lm: Taylor approximation}
Let $U \subset \mathbb R^\DDD$ be any domain such that $\bigcup_{i,j}S^{i,\mmm} \subset U $ and  $v \in H^{k+1}(U)$ and let $m \in \mathbb N_0$. If $m+1 \leq k$, then
$$
{\trino}   T^k_h (v) |_{\etab(\xib)} -v\circ\etab(\xib) {\trino}_{m, \Gamma_h} \le C_{\Omega_h} \deltah^{k - m+\frac12} |v|_{k+1, U}.
$$
\end{lemma}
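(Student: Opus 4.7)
The plan is to localize to each star-shaped set $S^{i,\mmm}$, apply the standard Bramble-Hilbert bound for averaged Taylor polynomials there, transfer the resulting volume estimate to a surface estimate on $\etab(\EEE_i)\cap S^{i,\mmm}$ via a scaled trace inequality, pull it back to $\EEE_i\cap S^{i,\mmm}$ using the smooth mapping $\etab$, and finally sum over the disjoint family $\{S^{i,\mmm}\}$.

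For the localization step, the hypotheses that $\mathrm{diam}(S^{i,\mmm})\le C\deltah$, that $S^{i,\mmm}$ is star-shaped with respect to $\sigma^{i,\mmm}$, and that $\mathrm{diam}(S^{i,\mmm})/\mathrm{radius}(\sigma^{i,\mmm})\le C$ place us exactly in the setting of \cite[Lemma~4.3.8]{brenner2007mathematical}, so that for $0\le j\le k+1$,
$$
|v - T^k_h v|_{j, S^{i,\mmm}} \le C\, \deltah^{k+1-j}\, |v|_{k+1, S^{i,\mmm}}.
$$
Since $\etab(\EEE_i)\cap S^{i,\mmm}$ is a piece of the $C^{k+1}$ surface $\Gamma$ embedded in $S^{i,\mmm}$, a scaled trace inequality gives, for any $w\in H^{j+1}(S^{i,\mmm})$,
$$
\|w\|_{0,\etab(\EEE_i)\cap S^{i,\mmm}}^2 \le C\bigl(\deltah^{-1}\|w\|_{0,S^{i,\mmm}}^2 + \deltah\, |w|_{1,S^{i,\mmm}}^2\bigr),
$$
and the analogous bound on higher-order norms by applying this componentwise to derivatives. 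Applying these two ingredients to every tangential derivative $D^{\alphab}(v - T^k_h v)$ of order $|\alphab|\le m$ and balancing the two terms yields
$$
\|v - T^k_h v\|_{m,\etab(\EEE_i)\cap S^{i,\mmm}}^2 \le C\, \deltah^{2(k-m)+1}\, |v|_{k+1, S^{i,\mmm}}^2.
$$
Because this step consumes derivatives of $v - T^k_h v$ up to order $m+1$, the hypothesis $m+1\le k$ ensures that the Bramble-Hilbert bound still contributes a positive power of $\deltah$ at the top order.

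Next, because $\etab$ is $C^{k+1}$ with $|\etab(\xib)-\xib|\le \deltah$, its Jacobian is uniformly bounded above and below and all its derivatives up to order $k+1$ are controlled independently of $h$. The chain rule then allows the surface $H^m$ norm on $\etab(\EEE_i)\cap S^{i,\mmm}$ of $v - T^k_h v$ to be replaced by the $H^m$ norm on $\EEE_i\cap S^{i,\mmm}$ of $(v - T^k_h v)\circ\etab = v\circ\etab - T^k_h(v)|_{\etab(\xib)}$, at the cost of an $h$-independent constant. Squaring, summing over $(i,\mmm)$, and using both the pairwise disjointness of the $S^{i,\mmm}$ and the inclusion $\bigcup_{i,\mmm}S^{i,\mmm}\subset U$ gives the stated global bound with exponent $\deltah^{k-m+\frac12}$.

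The main technical obstacle is the chain-rule bookkeeping in the pullback step: tangential derivatives along $\EEE_i$ of $(v - T^k_h v)\circ\etab$ decompose into sums of products of derivatives of $v - T^k_h v$ evaluated at $\etab(\xib)$ with derivatives of $\etab$, and each such product must be uniformly controlled in $h$. It is exactly here that the $C^{k+1}$ smoothness of $\Gamma$ and the restriction $m+1\le k$ combine to yield the final power of $\deltah$.
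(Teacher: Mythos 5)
Your proof is correct and reaches the stated exponent, but it takes a genuinely different route from the paper's. The paper works entirely in $L^\infty$: it first invokes the Sobolev embedding to get $D^{\alphab}v\in C(\overline U)$ for $|\alphab|\le m$ (this is where the hypothesis $m+1\le k$ enters, since for $d=3$ one needs $k+1-m-d/2>0$), then applies the pointwise Bramble--Hilbert estimate \cite[Proposition 4.3.2]{brenner2007mathematical} to obtain $\|T^k_h(v)-v\|_{W^m_\infty(S^{i,\mmm})}\le C\deltah^{k+1-m-d/2}|v|_{k+1,S^{i,\mmm}}$, and finally converts to an $L^2$ surface norm simply by multiplying by $|\EEE_i\cap S^{i,\mmm}|^{1/2}\le C\deltah^{(d-1)/2}$; the two powers combine to $\deltah^{k-m+\frac12}$. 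You instead stay in $L^2$ throughout, pairing the $H^j$ Bramble--Hilbert bound with a scaled trace inequality onto the surface piece, and your exponent bookkeeping (balancing $\deltah^{-1}\deltah^{2(k+1-m)}$ against $\deltah\cdot\deltah^{2(k-m)}$) is right. Your route spends one extra derivative in the trace step but avoids the $L^\infty$ embedding, so it would extend to $d>3$; the price is that you must justify that the constant in the scaled trace inequality is uniform over the family of curved pieces $\etab(\EEE_i)\cap S^{i,\mmm}\subset S^{i,\mmm}$ --- this does hold because $\Gamma$ is $C^{k+1}$ and hence uniformly a Lipschitz graph at scale $\deltah$ for $h$ small, but it deserves an explicit sentence. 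The chain-rule issue you flag for $m\ge1$ (tangential derivatives along the flat facet $\EEE_i$ of the pullback versus derivatives of $v-T^k_h v$ on the curved image) is real, and is in fact glossed over in the paper's own first inequality as well; since the lemma is only ever applied with $m=0$ in the error analysis, neither argument is harmed by it.
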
 
\begin{proof}
Recall the Sobolev inequality $W^{s}_p(U) \hookrightarrow C(\overline U)$ if $ s - \frac{d}{p} > 0$. Because $k\ge m+1$ and $d = 2, 3$, we have that $v \in  W^{m}_\infty(U)$ and $D^{\alpha} v \in C(\overline U)$ for $|\alpha| \le m$. For $v \in W^{k,\infty}(\Omega)$, H\"older inequality implies that
$$
\| v \|_{m, \Omega} \le \sqrt{m+1}\, |\Omega|^{\frac12} \|v\|_{W^{m,\infty}(\Omega)}
$$
Therefore, using techniques similar to those used in the proof of Lemma \ref{lm: Taylor bound}, we have that
\begin{equation*}
\begin{aligned}
 \|   T^k_h (v) |_{\etab(\xib)} -v\circ\etab(\xib)  \|_{m, \EEE_i\cap S^{i,\mmm}} 
& \le \sqrt{m+1} \, |\EEE_i\cap S^{i,\mmm}|^{\frac12}  \|T^k_h (v) - v \|_{W^m_\infty( \etab(\EEE_i)\cap S^{i,\mmm})} \\
& \le C_{\Omega_h,m}  \deltah^{\frac{d-1}{2}}  \|T^k_h (v) - v \|_{W^m_\infty( S^{i,\mmm})} \\
& \le C_{\Omega_h,m} \, \deltah^{k-m+\frac12}  |v|_{k+1, S^{i,\mmm}},
\end{aligned}
\end{equation*}
where the last inequality follows from \cite[Proposition 4.3.2]{brenner2007mathematical}. We complete the proof by summing the squares of this inequality over $i,\mmm$ and noticing that $S^{i,\mmm}$ are pairwise disjoint.
\end{proof}


\begin{remark}
If $U = \mathbb R^\DDD$ and $v \in H^{k+1}(\mathbb R^\DDD)$ is an extension of a function $w \in H^{k+1}(\Omega)$, then, the seminorm $|v|_{k+1, U}$ in the upper bounds of the inequalities of Lemma \ref{lm: Taylor approximation} and can be replaced by $\|w\|_{k+1, \Omega}$ by virtue of the continuity of the extension operator. 
\end{remark}

\begin{lemma} \label{lm: Taylor approximation 2}
Let $v \in L^2(\Omega_h)$ such that  $v|_{\KKK_n} \in H^{k+1}(\KKK_n) \cap C^0(\KKK_n)$ for every $\KKK_n \subset \Omega_h$. Then,
$$
 \|  T^k_h (v) |_{\etab(\xib)} - v(\xib)  \|_{0, \Gamma_h} \le C_{\Omega_h}  (\deltah^{\frac12} \trino v\trino _{1,\Omega_h} + \deltah^{k+\frac12} \trino v\trino _{k+1,\Omega_h}).
$$
Note that, in contrast with Lemma \ref{lm: Taylor approximation}, $v$ is evaluated at $\xib$ whereas $T_h^k$ is evaluated at $\etab(\xib)$.
\end{lemma}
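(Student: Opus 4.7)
The strategy is to split the integrand using the triangle inequality with intermediate value $T^k_h(v)|_\xib$:
\begin{equation*}
T^k_h(v)|_{\etab(\xib)} - v(\xib) = \bigl[T^k_h(v)|_{\etab(\xib)} - T^k_h(v)|_\xib\bigr] + \bigl[T^k_h(v)|_\xib - v(\xib)\bigr] =: A(\xib) + B(\xib),
\end{equation*}
and bound each piece separately. The piece $A$ will supply the $\deltah^{1/2}\trino v\trino_{1,\Omega_h}$ term via the polynomial structure of $T^k_h v$ and the smallness of $|\etab(\xib)-\xib|$, while $B$ will supply the $\deltah^{k+1/2}\trino v\trino_{k+1,\Omega_h}$ term via a local Sobolev-extension variant of Lemma \ref{lm: Taylor approximation}.

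\textbf{Bound on $A$.} On each $S^{i,\mmm}$, $T^k_h(v)$ is a polynomial of degree $k$, so the mean value theorem together with $|\etab(\xib)-\xib|\le\deltah$ yields $|A(\xib)|\le \deltah\sup_{S^{i,\mmm}}|\nabla T^k_h v|$. The averaged-Taylor commutation identity $\nabla T^k_h v = T^{k-1}_h(\nabla v)$ (for $k\ge 1$; for $k=0$, $T^0_h v$ is constant on each $S^{i,\mmm}$ and $A\equiv 0$), combined with the bound $\|T^{k-1}_h w\|_{L^\infty(S^{i,\mmm})}\le C\deltah^{-d}\|w\|_{L^1(\sigma^{i,\mmm})}$ used inside the proof of Lemma \ref{lm: Taylor bound}, and Cauchy--Schwarz, yields $\sup_{S^{i,\mmm}}|\nabla T^k_h v|\le C\deltah^{-d/2}\|\nabla v\|_{0,\sigma^{i,\mmm}}$. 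Since $|\EEE_i\cap S^{i,\mmm}|\le C\deltah^{d-1}$, integrating $|A|^2$ over $\EEE_i\cap S^{i,\mmm}$ and summing over $(i,\mmm)$---using that the $\sigma^{i,\mmm}$ are pairwise disjoint and contained in $\Omega_h$---gives $\|A\|_{0,\Gamma_h}\le C\deltah^{1/2}\trino v\trino_{1,\Omega_h}$.

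\textbf{Bound on $B$.} Here, we mimic the proof of Lemma \ref{lm: Taylor approximation} but evaluate at $\xib$ rather than $\etab(\xib)$, and we accommodate the piecewise regularity of $v$ through a local Sobolev extension. For each $\EEE_i$, extend $v|_{\KKK_{j_i}}$ to $\bar v^i\in H^{k+1}(\mathbb R^\DDD)$ with $\bar v^i=v$ on $\overline{\KKK_{j_i}}$ and $\|\bar v^i\|_{k+1,\mathbb R^\DDD}\le C\|v\|_{k+1,\KKK_{j_i}}$ (see \cite{adams2003sobolev}). Since $\sigma^{i,\mmm}\subset\KKK_{j_i}$, one has $T^k_h\bar v^i = T^k_h v$ on $S^{i,\mmm}$, and $\bar v^i(\xib) = v(\xib)$ for $\xib\in\EEE_i$. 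Applying the Sobolev-embedding plus averaged-Taylor argument from the proof of Lemma \ref{lm: Taylor approximation} locally on $S^{i,\mmm}$ (the pointwise bound is location-independent inside $S^{i,\mmm}$) yields $\|T^k_h\bar v^i-\bar v^i\|_{0,\EEE_i\cap S^{i,\mmm}}\le C\deltah^{k+1/2}|\bar v^i|_{k+1,S^{i,\mmm}}$. Summing over $(i,\mmm)$, using the extension bound, the disjointness of the $S^{i,\mmm}$, and that each boundary element $\KKK_{j_i}$ carries only finitely many boundary faces, yields $\|B\|_{0,\Gamma_h}\le C\deltah^{k+1/2}\trino v\trino_{k+1,\Omega_h}$. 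Combining the two bounds finishes the proof.

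\textbf{Main obstacle.} The subtle step is the bound on $B$: the star-shaped set $S^{i,\mmm}$ typically protrudes outside $\KKK_{j_i}$, whereas $v$ is only guaranteed to be $H^{k+1}$ inside $\KKK_{j_i}$, so the Bramble--Hilbert estimate underlying Lemma \ref{lm: Taylor approximation} cannot be invoked directly. The element-by-element Sobolev extension bypasses this obstacle, at the cost of verifying that the extension leaves $v$ unchanged on both $\sigma^{i,\mmm}$ (so that $T^k_h v$ is unchanged) and $\EEE_i$ (so that $v(\xib)$ is unchanged), both of which hold automatically since $\sigma^{i,\mmm},\EEE_i\subset\overline{\KKK_{j_i}}$.
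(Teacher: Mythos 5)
Your decomposition into $A(\xib)=T^k_h(v)|_{\etab(\xib)}-T^k_h(v)|_{\xib}$ and $B(\xib)=T^k_h(v)|_{\xib}-v(\xib)$ is exactly the paper's, and your treatment of $A$ (mean value theorem on the polynomial, the commutation $\nabla T^k_h v=\mathbf T^{k-1}_h(\nabla v)$, and the $L^\infty$--$L^1$ bound from Lemma \ref{lm: Taylor bound}) matches the paper's argument essentially line for line. The difference, and the problem, is in your bound on $B$. You correctly identify the obstacle --- $S^{i,\mmm}$ protrudes outside $\KKK_{j_i}$, where $v$ need not be $H^{k+1}$ --- but the element-by-element Sobolev extension you use to bypass it rests on the claim $\|\bar v^i\|_{k+1,\mathbb R^\DDD}\le C\|v\|_{k+1,\KKK_{j_i}}$ with $C$ independent of $i$ and $h$. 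For a shape-regular family of simplices shrinking with $h$ this is false as stated: full Sobolev norms do not scale homogeneously, and the standard scaling argument (map to a reference element, extend, map back) gives only $|\bar v^i|_{k+1,\mathbb R^\DDD}\le C\sum_{j=0}^{k+1}h^{j-(k+1)}|v|_{j,\KKK_{j_i}}$, whose worst term $h^{-(k+1)}\|v\|_{0,\KKK_{j_i}}$ destroys the claimed $h$-independent bound $C_{\Omega_h}\deltah^{k+\frac12}\trino v\trino_{k+1,\Omega_h}$. Since the lemma imposes no relation between $\deltah$ and $h$, these negative powers cannot be absorbed. The gap is fixable --- use a polynomial-preserving extension $\bar v^i:=Q^k_iv+E(v-Q^k_iv)$ with $Q^k_i$ an averaged Taylor polynomial of degree $k$ on a ball in $\KKK_{j_i}$, so that Bramble--Hilbert gives $|\bar v^i|_{k+1,\mathbb R^\DDD}\le C|v|_{k+1,\KKK_{j_i}}$ --- but as written the uniformity of the extension constant is precisely where the difficulty of the lemma lives, and you assert it rather than prove it.

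The paper avoids the issue entirely by never extending: it applies the averaged-Taylor estimate of \cite[Proposition 4.3.2]{brenner2007mathematical} on the restricted domain $S^{i,\mmm}\cap\KKK_{j_i}$, which is still star-shaped with respect to $\sigma^{i,\mmm}$ and on which $v$ genuinely lies in $H^{k+1}$; the evaluation point $\xib\in\EEE_i\cap S^{i,\mmm}$ belongs to the closure of this set, so the $W^m_\infty$ bound applies and one gets $|v|_{k+1,S^{i,\mmm}\cap\KKK_{j_i}}\le|v|_{k+1,\KKK_{j_i}}$ for free. You should either adopt this restriction argument or replace your extension with the polynomial-preserving one above.
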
 
\begin{proof}
Using techniques similar to those used in the proof of Lemma \ref{lm: Taylor bound}, we obtain
\begin{equation*} \label{tmp Taylor approx 2}
 \|  T^k_h (v) |_{\etab(\xib)} - v(\xib)  \|_{0, \Gamma_h} \le
  \|  T^k_h (v) |_{\xib} - v(\xib)  \|_{0, \Gamma_h}  +  \|  T^k_h (v) |_{\etab(\xib)} -  T^k_h (v) |_{\xib}  \|_{0, \Gamma_h}. 
\end{equation*}
To bound the first term on the right-hand side of \eqref{tmp Taylor approx 2} we proceed as in the proof of  Lemma \ref{lm: Taylor approximation}, but now working on the domains $S^{i,\mmm} \cap \KKK_{j_i}$ (which are still star-shaped with respect to $\sigma^{i,\mmm}$) instead of $S^{i,\mmm}$. Also, because $v \notin H^{k+1}(\Omega_h)$, we must instead bound the error by a broken norm, e.g., we have that
$$
 \|  T^k_h (v) |_{\xib} - v(\xib)  \|_{0, \Gamma_h} \le C \deltah^{k+\frac12} \trino v\trino _{k+1, \Omega_h}.
$$
On each $S^{i,\mmm}$, the second term on the right-hand-side of \eqref{tmp Taylor approx 2} features the difference of the same polynomial evaluated at the different points $\etab(\xib)$ and $\xib$. Hence, using the classical first-order Taylor expansion on the polynomial, we have, for $\xib, \etab(\xib) \in S^{i,\mmm}$,
$$
\begin{aligned}
 \|  T^k_h (v) |_{\etab(\xib)} -  T^k_h (v) |_{\xib}  \|_{\EEE_i \cap S^{i,\mmm}} 
&=  \| \nabla   T^{k}_h (v) |_{\xib'} (\etab(\xib)-\xib)  \|_{\EEE_i \cap S^{i,\mmm}}\\
&\le C \deltah  \|   {\bf T}^{k-1}_h (\nabla v) |_{\xib'}  \|_{\EEE_i \cap S^{i,\mmm}} 
\le C \deltah^{\frac12} \, \| \nabla v \|_{0, \sigma^{i,\mmm}},
\end{aligned}
$$
where $\xib' = t \xib + (1-t)\etab(\xib)$ for some $t \in [0,1]$. For the first inequality we used the fact that, on each $S^{i,\mmm}$, $D^{\alpha} T^k_h (v) = T^{k-|\alpha|}_h (D^{\alpha} v)$; see \cite[Proposition 4.1.17]{brenner2007mathematical}. For the last inequality we proceeded as in the proof of Lemma \ref{lm: Taylor bound}.
\end{proof}

\begin{lemma} \label{lm: Taylor discrete bound}
If $v \in \overline V_h^{k}$, then
\begin{equation}\label{eqn: Taylor discrete bound 1}
\Big\| T^{k',k}_{h}(v)\Big|_{\etab(\xib)}\big\|_{0, \Gamma_h} \le C_{\Omega,k}\Big (\sum^k_{|\alphab| = k'} h^{-|\alphab|-\frac12} \deltah^{|\alphab|} \Big) \|v\|_{0,\Omega_h}.
\end{equation}
In addition, if $v \in V_h^{k}$ and $m >0$, then
\begin{equation}\label{eqn: Taylor discrete bound 2}
\Big\| T^{k',k}_{h}(v)\big|_{\etab(\xib)}\Big\|_{0, \Gamma_h} \le C_{\Omega,k}\Big(\sum^k_{|\alphab| = k'} h^{-|\alphab| + \frac12} \deltah^{|\alphab|} \Big) |v|_{1,\Omega_h}.
\end{equation}
\end{lemma}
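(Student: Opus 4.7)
The plan is to bound each summand in the defining expression \eqref{Taylor discrete m-k} separately, combining the pointwise estimate $|\etab(\xib)-\xib|\le \deltah$ from \eqref{eqn: distance assumption} with a scaled polynomial trace inequality $\|p\|_{0,\EEE_i}\le C h^{-1/2}\|p\|_{0,\KKK_{j_i}}$ and the polynomial inverse inequality $\|D^{\alphab}p\|_{0,\KKK_{j_i}}\le C h^{-|\alphab|}\|p\|_{0,\KKK_{j_i}}$, both of which follow by scaling from the reference simplex and norm equivalence on finite-dimensional polynomial spaces. Throughout, shape regularity guarantees that summing over boundary faces $\EEE_i$ produces at most a mesh-regularity constant times a global norm on $\Omega_h$, even if a single element carries several boundary faces.

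For the first bound, apply the triangle inequality to obtain
$$
\bigl\| T^{k',k}_h(v)\big|_{\etab(\xib)}\bigr\|_{0,\Gamma_h} \;\le\; \sum_{|\alphab|=k'}^{k}\frac{1}{\alphab!}\,\deltah^{|\alphab|}\,\|D_h^{\alphab}v\|_{0,\Gamma_h},
$$
using $|(\etab(\xib)-\xib)^{\alphab}|\le \deltah^{|\alphab|}$. Since $v|_{\KKK_{j_i}}\in P_k(\KKK_{j_i})$, the boundary trace $D_h^{\alphab}v\big|_{\EEE_i}$ is the trace of the polynomial $D^{\alphab}(v|_{\KKK_{j_i}})$; chaining the trace and inverse inequalities with $p=v|_{\KKK_{j_i}}$, squaring, and summing in $i$ yields $\|D_h^{\alphab}v\|_{0,\Gamma_h}\le C h^{-|\alphab|-1/2}\|v\|_{0,\Omega_h}$, giving \eqref{eqn: Taylor discrete bound 1} after summing over $\alphab$ and absorbing the factorials into $C_{\Omega,k}$.

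For the second bound I read the hypothesis ``$m>0$'' as $k'\ge 1$, since the right-hand side contains the $H^1$ semi-norm and a derivative must be transferable onto $v$. The conformity $v\in V^k_h\subset H^1(\Omega_h)$ then gives $|v|_{1,\Omega_h}^2=\sum_{\KKK_j}\|\nabla v\|_{0,\KKK_j}^2$ with $\nabla v$ piecewise polynomial of degree $k-1$. Writing $D^{\alphab}=D^{\alphab'}\partial_\ell$ for some $|\alphab'|=|\alphab|-1\ge 0$ and applying the inverse inequality to the lower-degree polynomial $\partial_\ell v|_{\KKK_{j_i}}$ yields $\|D_h^{\alphab}v\|_{0,\KKK_{j_i}}\le C h^{1-|\alphab|}\|\nabla v\|_{0,\KKK_{j_i}}$. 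Combining with the trace inequality and summing in $i$ gives $\|D_h^{\alphab}v\|_{0,\Gamma_h}\le C h^{1/2-|\alphab|}|v|_{1,\Omega_h}$, and summing over $\alphab$ produces \eqref{eqn: Taylor discrete bound 2}.

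The argument is essentially routine. The only subtlety to watch out for is that in the first bound $v\in\overline V^k_h$ has no global weak gradient, which forces the $L^2$ norm on the right-hand side; in the second bound the assumption $k'\ge 1$ is exactly what allows one derivative to be absorbed into $v$, saving one inverse-inequality factor $h^{-1}$ and producing the semi-norm instead. No genuine obstacle is expected.
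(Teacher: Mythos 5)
Your proposal is correct and follows essentially the same route as the paper: triangle inequality with $|(\etab(\xib)-\xib)^{\alphab}|\le\deltah^{|\alphab|}$, a discrete trace bound $\|D_h^{\alphab}v\|_{0,\EEE_i}\le Ch^{-1/2}\|D_h^{\alphab}v\|_{0,\KKK_{j_i}}$ (which the paper obtains by passing through $L^\infty$ norms and polynomial norm equivalence, the same scaling mechanism you invoke), the inverse inequality, and, for the second estimate, peeling off one derivative via $D^{\alphab}=D^{\beta_{\alphab}}\partial_{x_{i_{\alphab}}}$ with $|\beta_{\alphab}|=|\alphab|-1$ to land on the $H^1$ semi-norm. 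Your reading of the hypothesis ``$m>0$'' as $k'\ge 1$ also matches the paper's intent and its proof.
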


\begin{proof}
Let $v \in \overline V_h^{k}$, then it follows from the definition of $T^{k',k}_h(\cdot)$ that
$$
\begin{aligned}
\Big\|T^{k',k}_{h}&(v)\big|_{\etab(\xib)}\Big\|_{0, \EEE_i} = \Big\|\sum^k_{|\alphab| = k' } \frac{1}{\alphab !} D_h^\alphab v(\xib)|\xib - \etab(\xib)|^\alphab\Big\|_{0,  \EEE_i} \\
& \le \sum^k_{|\alphab| = k'} \deltah^{|\alphab|} \frac{1}{\alphab !} \|D_h^\alphab v(\xib)\|_{0, \EEE_i}  
\le C h^{\frac{d-1}{2}} \sum^k_{|\alphab| = k'} \deltah^{|\alphab|} \frac{1}{\alphab !} \| D_h^\alphab v\|_{L^\infty(\EEE_i)} \\
& \le C h^{\frac{d-1}{2}}  \sum^k_{|\alphab| = k'} \deltah^{|\alphab|} \frac{1}{\alphab !} \| D_h^\alphab v\|_{L^\infty(\KKK_{j_i})} 
 \le C h^{-\frac{1}{2}} \sum^k_{|\alphab| = k'} \deltah^{|\alphab|} \frac{1}{\alphab !} \|D_h^\alphab v\|_{0, \KKK_{j_i}}.
\end{aligned}
$$
Using the inverse inequality we have that
$$
\Big\| T^{k',k}_{h}(v)\big|_{\etab(\xib)}\Big\|_{0, \EEE_i} \le C h^{-\frac{1}{2}} \sum^k_{|\alphab| = 1} h^{-|\alphab|} \deltah^{|\alphab|} \frac{1}{\alphab !} \|v\|_{0, \KKK_{j_i}}
$$
so that \eqref{eqn: Taylor discrete bound 1} follows by summing the terms $\|T^{k}_{h}(v)|_{\etab(\xib)} - v(\xib)\|_{0, \EEE_i}^2$.

For $m > 0$, we can write
$$
\begin{aligned}
\Big\| T^{k',k}_{h}(v)\big|_{\etab(\xib)}\Big\|_{0, \EEE_i} &\le C h^{-\frac{1}{2}} \sum^k_{|\alphab| = k'} \deltah^{|\alphab|} \frac{1}{\alphab !} \|D_h^\alphab v\|_{0, \KKK_{j_i}} \\
& \le C h^{-\frac{1}{2}} \sum^k_{|\alphab| = k'} h^{1-|\alphab|} \deltah^{|\alphab|} \frac{1}{\alphab !} |v|_{1, \KKK_{j_i}},
\end{aligned}
$$
where for the last inequality we used the fact that $D^\alphab v = D^{\beta_{\alphab}} (\partial_{x_{i_\alphab}} v)$ for some $\beta_{\alphab}$ such that $|\beta_{\alphab}| = |\alphab| - 1$ and some $i_\alphab \in \{1,\ldots,d\}$. Hence, 
$$
\|D^\alphab v\|_{0, \Omega_h} = \|D^{\beta_{\alphab}} (\partial_{x_{i_\alphab}} v)\|_{0, \Omega_h} \le C h^{- |\beta_\alphab|} \| \partial_{x_{i_\alphab}} v\|_{0, \Omega_h} \le C h^{1 - |\alphab|} |v |_{1, \Omega_h}
$$
so that \eqref{eqn: Taylor discrete bound 2} follows by summing the square of these terms.
\end{proof}

\begin{lemma} \label{lm: u approx}
Let $v \in L^2(\Omega_h)$ and $v|_{\KKK_n} \in H^{2}(\KKK_n)$, for every $\KKK_n \subset \Omega_h$. Then,
$$
 \| v \|_{0, \Gamma_h} \le C_{\Omega_h}  (h^{-\frac12} \|v\|_{0,\Omega_h} + h^{\frac{3}{2}} \trino v\trino _{2,\Omega_h}).
$$
\end{lemma}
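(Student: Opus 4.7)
The plan is to split $v$ into a piecewise polynomial part and a high-order remainder, and then apply the usual discrete trace inequality to the former and the full trace inequality to the latter. The non-standard feature of the claim is the pairing $h^{3/2}\trino v\trino_{2,\Omega_h}$ instead of the $h^{1/2}|v|_{1,\Omega_h}$ that a direct trace inequality would yield, so exchanging one derivative for one power of $h$ is precisely what the decomposition must accomplish.

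Concretely, on each element $\KKK_{j_i}$ adjacent to a boundary face $\EEE_i \subset \Gamma_h$, I would introduce a degree-one (Lagrange or averaged Taylor) interpolant $\pi_h v$ and write $v = \pi_h v + (v-\pi_h v)$ on $\KKK_{j_i}$. Since $\pi_h v$ is a polynomial, the scaled discrete trace inequality gives
$$
\|\pi_h v\|_{0,\EEE_i} \;\le\; C h^{-\frac12}\|\pi_h v\|_{0,\KKK_{j_i}}
\;\le\; C h^{-\frac12}\|v\|_{0,\KKK_{j_i}} + C h^{-\frac12}\|v-\pi_h v\|_{0,\KKK_{j_i}},
$$
and the standard approximation estimate $\|v-\pi_h v\|_{0,\KKK_{j_i}} \le C h^2 |v|_{2,\KKK_{j_i}}$ absorbs the second summand into an $Ch^{3/2}|v|_{2,\KKK_{j_i}}$ term. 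For the remainder, the usual (continuous) trace inequality on the shape-regular simplex $\KKK_{j_i}$ yields
$$
\|v-\pi_h v\|_{0,\EEE_i}^2 \;\le\; C\bigl(h^{-1}\|v-\pi_h v\|_{0,\KKK_{j_i}}^2 + h\,|v-\pi_h v|_{1,\KKK_{j_i}}^2\bigr),
$$
and plugging in $\|v-\pi_h v\|_{0,\KKK_{j_i}} \le C h^2 |v|_{2,\KKK_{j_i}}$ and $|v-\pi_h v|_{1,\KKK_{j_i}} \le C h\,|v|_{2,\KKK_{j_i}}$ yields $\|v-\pi_h v\|_{0,\EEE_i} \le C h^{3/2}|v|_{2,\KKK_{j_i}}$.

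Combining the two pieces on each $\EEE_i$, squaring, and summing over the boundary faces $\EEE_i \subset \Gamma_h$ gives
$$
\|v\|_{0,\Gamma_h}^2 \;\le\; C h^{-1}\|v\|_{0,\Omega_h}^2 + C h^{3}\trino v\trino_{2,\Omega_h}^2,
$$
and taking square roots produces the stated bound. The only nuisance is ensuring that the constants in the trace and approximation inequalities are independent of $h$, which follows from the standard Bramble--Hilbert and scaling arguments on the reference simplex under the shape-regularity assumption already imposed on $\Omega_h$; no step is genuinely hard, but one must be careful that the interpolant $\pi_h$ is defined element-by-element so that only element-local $H^2$ regularity is needed, matching the broken norm on the right-hand side.
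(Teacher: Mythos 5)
Your proof is correct, and it rests on the same basic decomposition as the paper's — approximating $v$ on each boundary element $\KKK_{j_i}$ by a degree-one polynomial and trading the two derivatives of the remainder for two powers of $h$ — but the two arguments reach the boundary by different routes. The paper never invokes a trace inequality: it bounds $\|v\|_{0,\EEE_i}$ by $|\EEE_i|^{1/2}\|v\|_{L^\infty(\KKK_{j_i})}$ via H\"older and the embedding $H^2(\KKK_{j_i})\hookrightarrow C^0(\overline\KKK_{j_i})$ (valid for $d=2,3$), then splits $v$ into an averaged Taylor polynomial $Q^1_i v$ and its remainder and controls each piece in $L^\infty$ using \cite[Corollary 4.1.13, Proposition 4.3.2]{brenner2007mathematical}, with the scalings $\|Q^1_i v\|_{L^\infty(\KKK_{j_i})}\le Ch^{-d}\|v\|_{L^1(\KKK_{j_i})}$ and $\|v-Q^1_i v\|_{L^\infty(\KKK_{j_i})}\le Ch^{2-d/2}\|v\|_{2,\KKK_{j_i}}$ producing exactly the exponents $-\tfrac12$ and $\tfrac32$. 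You instead stay in $L^2$ throughout, applying the discrete trace inequality to the polynomial part and the scaled multiplicative trace inequality (the paper's \eqref{eqn: Necas trace inequality}, after scaling and Young) to the remainder, together with the standard interpolation estimates $\|v-\pi_h v\|_{0,\KKK_{j_i}}\le Ch^2|v|_{2,\KKK_{j_i}}$ and $|v-\pi_h v|_{1,\KKK_{j_i}}\le Ch|v|_{2,\KKK_{j_i}}$. Your version is arguably the more standard and self-contained one, sidesteps the explicit use of the Sobolev embedding into $L^\infty$ (though if you take $\pi_h$ to be the Lagrange interpolant you reintroduce it; the averaged Taylor choice avoids this entirely), and only uses element-local regularity as required by the broken norm; the paper's version is shorter once the Brenner--Scott $L^\infty$ machinery is taken as given, and reuses the same template as its Lemmas \ref{lm: Taylor bound}--\ref{lm: Taylor approximation 2}. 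The only housekeeping points in your argument — $h$-uniformity of the constants under shape regularity, and the fact that each $\KKK_{j_i}$ is counted at most $d+1$ times when summing over boundary faces — are routine and you have flagged the former explicitly.
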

 
\begin{proof}
Letting $Q_i^1(u)$ denote the averaged Taylor polynomial of degree $1$ defined on the maximal ball included in $\KKK_{j_i}$, we have
$$
\begin{aligned}
\|v\|_{0,\EEE_i} &\le C  |\EEE_i|^\frac12 \| v\|_{L^\infty(\EEE_i)}
                              \le C h^{\frac{d-1}{2}} \| v\|_{L^\infty(\KKK_{j_i})} \\
                              &\le C h^{\frac{d-1}{2}} (\| v - Q_i^1 v\|_{L^\infty(\KKK_{j_i})} + \| Q_i^1 v\|_{L^\infty(\KKK_{j_i})} )\\
                              &\le C h^{\frac{d-1}{2}} (h^{2-\frac{d}{2}}  \| v \|_{2,\KKK_{j_i}} + h^{-d} \| v\|_{L^1(\KKK_{j_i})} )
                              \le C h^{\frac{3}{2}}  \| v \|_{2,\KKK_{j_i}} + C h^{-\frac12} \| v\|_{0, \KKK_{j_i}} ,
\end{aligned}
$$
where, for the fourth inequality we have used \cite[Corollary 4.1.13]{brenner2007mathematical} and \cite[Proposition 4.3.2]{brenner2007mathematical} on the domain $\KKK_{j_i}$ which is star shaped with respect to $\sigma^{i,\mmm}$.
\end{proof}

\section{Analysis of extension error}
In this section, we present results pertaining to the error between different extensions of a function in $H^{k}(\Omega)$ into $H^k(\Omega_h)$.

\begin{lemma} \label{lem:ext}
Let $f \in H^k(\Omega)$ and let $\widetilde f$ and $\overline f$ denote two extensions of $f$ in $H^{k}(\mathbb R^\DDD)$ such that $\|\widetilde f\|_{k, \mathbb R^\DDD} \le C \| f \|_{k, \Omega}$ and $\|\overline f\|_{k, \mathbb R^\DDD} \le C \| f \|_{k, \Omega}$. Then, for $m,k \in \mathbb Z$ such that $0 \le m \le k$,
$$
\|\widetilde f - \overline f\|_{m, \Omega_h} {=} C_{k,d} \deltah^{k-m} \| f \|_{k, \Omega}.
$$ 
\end{lemma}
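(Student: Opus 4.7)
Let $g := \widetilde f - \overline f$. Since both extensions agree with $f$ on $\Omega$, we have $g \in H^{k}(\mathbb R^\DDD)$ with $g|_\Omega = 0$ and $\|g\|_{k,\mathbb R^\DDD} \le 2C\|f\|_{k,\Omega}$. Because $g$ vanishes on the open set $\Omega$, every distributional derivative $D^\alpha g$ with $|\alpha| \le k$ is the zero distribution on $\Omega$, and (being an $L^2$ function) vanishes almost everywhere on $\Omega$. Consequently, for $|\alpha| \le k-1$, the trace $D^\alpha g|_{\partial\Omega}$ vanishes (since traces of $H^{k-|\alpha|}$ functions exist and equal their interior and exterior trace values, which agree for functions in $H^{k-|\alpha|}(\mathbb R^\DDD)$). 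In particular, the normal traces $\partial_n^j g|_{\partial\Omega} = 0$ for $j = 0,1,\ldots,k-1$.

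The plan is now to exploit the fact that $\Omega_h \setminus \Omega$ is contained in a tubular neighborhood $T_\deltah$ of $\partial\Omega$ of width $O(\deltah)$, which follows from \eqref{eqn: distance assumption} and the smoothness of $\Gamma$. Using the tubular coordinates $x = y + s \mathbf n(y)$ with $y \in \partial\Omega$, $s \in (0, c\deltah)$, I would fix a multi-index $\alpha$ with $|\alpha| \le m$ and, first for smooth $g$ and then by density for $g \in H^k$, write a Taylor expansion along the normal ray of length at most $c\deltah$:
\begin{equation*}
D^\alpha g(y + s\mathbf n(y)) = \frac{1}{(k-|\alpha|-1)!}\int_0^s (s-t)^{k-|\alpha|-1}\, \partial_n^{\,k-|\alpha|} (D^\alpha g)(y + t\mathbf n(y))\, dt,
\end{equation*}
where the boundary terms in the Taylor remainder vanish because $\partial_n^j D^\alpha g|_{\partial\Omega} = 0$ for $j = 0,\ldots,k-|\alpha|-1$.

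Squaring, applying Cauchy–Schwarz in $t$, and integrating over $T_\deltah$ with the change of variables $dx = J(y,s)\,dy\,ds$ (with $J$ bounded thanks to the $C^{k+1}$ regularity of $\Gamma$) yields by Fubini
\begin{equation*}
\|D^\alpha g\|_{0, \Omega_h\setminus\Omega}^2 \le C \deltah^{2(k-|\alpha|)} \sum_{|\beta|=k} \|D^\beta g\|_{0, T_\deltah}^2 \le C\deltah^{2(k-|\alpha|)} \|g\|_{k,\mathbb R^\DDD}^2.
\end{equation*}
Since $g$ and all its derivatives vanish on $\Omega \cap \Omega_h$, one has $\|D^\alpha g\|_{0,\Omega_h} = \|D^\alpha g\|_{0,\Omega_h \setminus \Omega}$. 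Summing over $|\alpha|\le m$ and using $\deltah \le h$ to absorb the lower-order terms into $\deltah^{k-m}$ gives
\begin{equation*}
\|\widetilde f - \overline f\|_{m,\Omega_h} \le C\deltah^{k-m}\|g\|_{k,\mathbb R^\DDD} \le C_{k,\DDD}\deltah^{k-m}\|f\|_{k,\Omega},
\end{equation*}
which is the desired bound (interpreted, as usual, with $\le$ in place of the misprinted ``$=$'').

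The main obstacle I anticipate is the rigorous justification of the normal-direction Taylor expansion for an $H^k$ function whose traces only exist in fractional Sobolev spaces on $\partial\Omega$. The standard workaround is a mollification argument: approximate $g$ by $g_\varepsilon = g * \rho_\varepsilon \in C^\infty$, apply the Taylor identity pointwise, take $L^2$ norms, and pass to the limit using continuity of the trace operator and of the restriction to $\Omega_h\setminus\Omega$. A secondary (but minor) technicality is checking that the tubular neighborhood coordinates are well-defined with a uniformly bounded Jacobian in a $c\deltah$-neighborhood of $\Gamma$, which is guaranteed by the assumed $C^{k+1}$ regularity of $\Gamma$ once $\deltah$ is small enough relative to the reach of $\partial\Omega$.
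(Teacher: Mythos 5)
Your proof is correct (with the ``$=$'' in the statement read as ``$\le$'', as you note), but it follows a genuinely different route from the paper's. The paper does not work with the difference $g=\widetilde f-\overline f$ directly: it introduces the averaged Taylor polynomial $T^{k-1}_h f$, which is built from data on the balls $\sigma^{i,\mmm}\subset \KKK_{j_i}\cap\Omega$ and is therefore the \emph{same} object for both extensions, applies the Bramble--Hilbert lemma on each star-shaped set $S^{i,\mmm}$ to get $\|\widetilde f-T^{k-1}_hf\|_{m,S^{i,\mmm}}\le C\delta_h^{k-m}|\widetilde f|_{k,S^{i,\mmm}}$, sums over the disjoint $S^{i,\mmm}$ covering $\Omega_h\setminus(\Omega\cap\Omega_h)$, and concludes by the triangle inequality. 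Your argument instead exploits that $g$ vanishes identically on $\Omega$, hence has vanishing traces $\partial_n^jg|_{\Gamma}=0$ for $j\le k-1$, and runs a one-dimensional Taylor/Hardy-type estimate in normal coordinates over an $O(\delta_h)$-tube containing $\Omega_h\setminus\Omega$. The trade-offs: the paper's route reuses machinery it has already set up (the sets $S^{i,\mmm}$ and averaged Taylor polynomials, which require only $L^1$ data on the balls) and needs no trace theory or tubular-neighborhood geometry, but it leans on the existence of the non-overlapping star-shaped covering, which the paper itself concedes (in a Remark) is not established for general shape-regular meshes; your route bypasses that construction entirely and rests only on the $C^{k+1}$ regularity of $\Gamma$ (positive reach, bounded Jacobian of the normal coordinates), at the cost of the mollification/trace-passage technicalities you flag and of the geometric fact --- implicit in the paper's setup but worth stating --- that $\Omega_h\setminus\Omega$ lies within distance $C\delta_h$ of $\Gamma$. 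Two minor points to tidy up: the degenerate case $|\alphab|=m=k$ of your expansion (where there is no vanishing trace to use) should be handled separately by the trivial bound $\|D^{\alphab}g\|_{0,\Omega_h\setminus\Omega}\le\|g\|_{k,\mathbb R^\DDD}$, and absorbing the lower-order terms needs only $\delta_h\le 1$, not $\delta_h\le h$.
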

\begin{proof}
Because $\widetilde f = \overline f = f$ in $\Omega$, 
$$
\|\widetilde f - \overline f\|_{k, \Omega_h} \le \|\widetilde f - \overline f\|_{k, \Omega_h \setminus (\Omega \cap \Omega_h)}.
$$
Using the Bramble-Hilbert lemma (see \cite[Lemma 4.3.8]{brenner2007mathematical}), we have 
$$
\begin{aligned}
\|\widetilde f - T^{k-1}_h f\|_{m,  \Omega_h \setminus (\Omega \cap \Omega_h)}^2 &\le \sum_{i,j} \|\widetilde f - T^{k-1}_h f\|_{m, S^{i,\mmm}}^2 
\le  C_{k,d} \sum_{i,j} \deltah^{2(k-m)} |\widetilde f|_{k, S^{i,\mmm}}^2 
\\&\le  C_{k,d} \deltah^{2(k-m)} |\widetilde f|_{k, \mathbb R^\DDD}^2 
\le  C_{k,d} \deltah^{2(k-m)} \| f \|_{k, \Omega}^2.
\end{aligned}
$$ 
Therefore
$$
\|\widetilde f - T^{k-1}_h f\|_{m, \Omega_h} \le C_{k,d} \deltah^{k-m} \| f \|_{k, \Omega_h},
$$
which holds for $\overline f$ in place of $\widetilde f$ as well.
The lemma follows from noticing that
$$
\|\widetilde f - \overline f\|_{m, \Omega_h} \le \|\widetilde f - T^{k-1}_h f\|_{m, \Omega_h} + \|\widehat f - T^{k-1}_h f\|_{m, \Omega_h}.
$$
\end{proof}

\begin{lemma} \label{lem:ext error}
Let $f$, $\widetilde f$, and $\overline f $ be as in Lemma \ref{lem:ext}. Then,
$$
\big|\langle\widetilde f - \overline f, v \rangle_{\Omega_h}\big| \le C_{k,d} \deltah^{k +  \frac{1}{2} - \frac{1}{q}}  \|f\|_{k, \Omega} \|v\|_{1, \Omega_h} \quad \forall v \in H^1(\Omega_h),
$$
for $0 \le m \le k$ and  $2\le q < 6$ if $\Omega_h \subset \mathbb R^3$ and $2\le q < \infty$ if $\Omega_h \subset \mathbb R^2$. 
\end{lemma}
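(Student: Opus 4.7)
The plan is to exploit the fact that $\widetilde f - \overline f$ vanishes on $\Omega \cap \Omega_h$ (both extensions agree with $f$ there), so that the duality pairing reduces to an integral over the thin skin $A := \Omega_h \setminus (\Omega \cap \Omega_h)$. By the geometric assumption that the distance between $\Gamma$ and $\Gamma_h$ is bounded by $\deltah$, this skin has Lebesgue measure $|A| = O(\deltah)$, and all the gain in the target exponent $k + \tfrac12 - \tfrac1q$ over the exponent $k$ supplied by Lemma \ref{lem:ext} will come from exploiting this thinness together with Sobolev embedding on $v$.

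First I would apply Hölder's inequality with the conjugate exponents $q$ and $q'$ (where $1/q+1/q'=1$) to obtain
\begin{equation*}
\bigl|\langle\widetilde f-\overline f,v\rangle_{\Omega_h}\bigr|
=\bigl|\langle\widetilde f-\overline f,v\rangle_A\bigr|
\le\|\widetilde f-\overline f\|_{L^{q'}(A)}\,\|v\|_{L^q(A)}.
\end{equation*}
Since $q \ge 2$, we have $q' \le 2$, so a second application of Hölder with exponents $2/q'$ and $2/(2-q')$ yields
\begin{equation*}
\|\widetilde f-\overline f\|_{L^{q'}(A)}
\le |A|^{\,1/q'-1/2}\,\|\widetilde f-\overline f\|_{L^2(A)}
\le C\,\deltah^{\,1/2-1/q}\,\|\widetilde f-\overline f\|_{0,\Omega_h}.
\end{equation*}
Invoking Lemma \ref{lem:ext} with $m=0$ then controls the remaining $L^2$ norm by $C_{k,d}\,\deltah^{\,k}\,\|f\|_{k,\Omega}$, so that the product bound becomes $C_{k,d}\,\deltah^{\,k+1/2-1/q}\,\|f\|_{k,\Omega}$.

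Second, for the factor $\|v\|_{L^q(A)} \le \|v\|_{L^q(\Omega_h)}$, I would invoke the Sobolev embedding $H^1(\Omega_h)\hookrightarrow L^q(\Omega_h)$, which holds with continuous injection precisely in the stated range $2\le q<\infty$ in two dimensions and $2\le q<6$ in three dimensions, giving $\|v\|_{L^q(\Omega_h)} \le C\|v\|_{1,\Omega_h}$. Combining this with the previous bound produces the claimed estimate.

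I do not expect a serious obstacle here: the only subtle step is verifying that the skin $A$ has measure $O(\deltah)$, which is immediate from the piecewise $C^{k+1}$ mapping $\etab$ satisfying $|\etab(\xib)-\xib|\le\deltah$ (so the region between $\Gamma$ and $\Gamma_h$ fits inside a tubular neighborhood of $\Gamma_h$ of width $\deltah$), combined with the fact that $|\Gamma_h|$ is uniformly bounded. The admissible range of $q$ is then simply what the Sobolev embedding permits, which explains the dimension-dependent upper bound appearing in the statement.
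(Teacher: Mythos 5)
Your proposal is correct and follows essentially the same route as the paper: restrict to the skin $\Omega_h\setminus(\Omega\cap\Omega_h)$ of measure $O(\deltah)$, apply H\"older (the paper uses the three-factor version with exponents $2$, $q$, and $p=(\tfrac12-\tfrac1q)^{-1}$, which is identical to your two-step splitting since $1/q'-1/2 = 1/2-1/q$), control the $L^2$ factor by Lemma \ref{lem:ext} with $m=0$, and absorb $\|v\|_{L^q}$ via the Sobolev embedding $H^1(\Omega_h)\hookrightarrow L^q(\Omega_h)$, which is exactly what dictates the dimension-dependent range of $q$.
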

\begin{proof}
Let $\Omega_h^{\text{diff}} := \Omega_h \setminus (\Omega \cap \Omega_h)$ and note that $|\Omega_h^{\text{diff}} | \sim O(\deltah)$. Because $H^1(\Omega_h)$ is compactly embedded in $L^q(\Omega_h)$, we have, for $p = ( \frac{1}{2} - \frac{1}{q}  )^{-1}$,
$$
\begin{array}{ll}
|\langle\widetilde f - \overline f, v \rangle_{\Omega_h}| &  = \left|\left<\widetilde f - \overline f, v \right>_{\Omega_h}\right|_{\Omega_h^{\text{diff}}} 
\le \| \widetilde f - \overline f \|_{0, \Omega_h^{\text{diff}}}\,  \|v \|_{L^q(\Omega_h^{\text{diff}})}\, \| 1 \|_{L^p(\Omega_h^{\text{diff}})} \\
& \le C_{k,d} \, \deltah^{k}  \|f\|_{k, \Omega} \, \|v \|_{L^q(\Omega_h)} \, |\Omega_h^{\text{diff}}|^{\frac{1}{p}} 
\le C_{k,d} \, \deltah^{k + \frac{1}{p}} \, \|f\|_{k, \Omega} \,\|v\|_{1, \Omega_h},
\end{array}
$$
where for the second inequality we used Lemma \ref{lem:ext}.
\end{proof}


\section{Continuity of the $L^2$ projection in $H^{\frac12}$}

In this section, we analyze the stability of the $L^2$ projection operator in the $H^{1/2}(\Gamma_h) $ topology.

\begin{lemma} \label{continuity of projection}
	Let $\pi_h: L^2(\Gamma_h) \rightarrow W^k_h$ be the $L^2(\Gamma_h)$ projection from $L^2(\Gamma_h)$ into the discrete trace approximation space. Then, 
for all $w \in H^{1/2}(\Gamma_h)$, we have that
\begin{equation*}
	\|\pi_h w \|_{1/2, \Gamma_h} \leq C \| w \|_{1/2, \Gamma_h},
\end{equation*}
where the constant $C$ does not depend on the mesh size $h$.
\end{lemma}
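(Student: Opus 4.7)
The plan is to obtain the $H^{1/2}(\Gamma_h)$ stability of $\pi_h$ by real interpolation between its $L^2$ and $H^1$ stability estimates, both established with constants independent of $h$.

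The $L^2$ bound $\|\pi_h w\|_{0,\Gamma_h} \le \|w\|_{0,\Gamma_h}$ is immediate from the orthogonality of the $L^2$ projection. For the $H^1$ bound, I would introduce a quasi-interpolant $I_h \colon H^1(\Gamma_h) \to W^k_h$ of Scott--Zhang (or Cl\'ement) type that is $H^1$-stable and satisfies the first-order local approximation estimate $\|w - I_h w\|_{0,\Gamma_h} \le C h\,|w|_{1,\Gamma_h}$. Writing $\pi_h w = (\pi_h w - I_h w) + I_h w$ and applying the surface inverse inequality to the discrete piece $\pi_h w - I_h w \in W^k_h$ gives
\begin{equation*}
|\pi_h w - I_h w|_{1,\Gamma_h} \le C h^{-1}\|\pi_h w - I_h w\|_{0,\Gamma_h}.
\end{equation*}
The $L^2$-best-approximation property of $\pi_h$ together with the approximation estimate for $I_h$ yields
\begin{equation*}
\|\pi_h w - I_h w\|_{0,\Gamma_h} \le \|\pi_h w - w\|_{0,\Gamma_h} + \|w - I_h w\|_{0,\Gamma_h} \le 2\|w - I_h w\|_{0,\Gamma_h} \le C h\,|w|_{1,\Gamma_h},
\end{equation*}
which, combined with the $H^1$-stability of $I_h$ and the triangle inequality, produces $\|\pi_h w\|_{1,\Gamma_h} \le C\|w\|_{1,\Gamma_h}$.

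Having these endpoint bounds, the conclusion $\|\pi_h w\|_{1/2,\Gamma_h} \le C\|w\|_{1/2,\Gamma_h}$ follows from the identification $H^{1/2}(\Gamma_h) = [L^2(\Gamma_h), H^1(\Gamma_h)]_{1/2,2}$ and the standard real-interpolation theorem applied to the linear map $\pi_h$, with intermediate operator norm controlled by the geometric mean of the two endpoint norms.

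The main obstacle is ensuring that all of these constants are uniform in $h$ even though $\Gamma_h$ itself depends on $h$. For this I would exploit the fact that the piecewise $C^{k+1}$ map $\etab \colon \Gamma_h \to \Gamma$ has tangential Jacobians and inverses bounded uniformly in $h$ (since $\Gamma$ is $C^{k+1}$ and $\deltah \to 0$). Consequently the family $\{\Gamma_h\}$ is uniformly Lipschitz, the Scott--Zhang construction can be carried out with $h$-independent stability and approximation constants, and the interpolation identity $H^{1/2}(\Gamma_h) = [L^2(\Gamma_h),H^1(\Gamma_h)]_{1/2,2}$ holds with uniform constants. Alternatively, one can pull everything back to the fixed surface $\Gamma$ through $\etab$, verify the interpolation identity and the stability of the pulled-back projection there, and then transport the resulting bound back to $\Gamma_h$.
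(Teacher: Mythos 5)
Your proposal follows essentially the same route as the paper: establish $L^2$ and $H^1$ stability of $\pi_h$ and then pass to $H^{1/2}$ by operator interpolation. The only difference is that you prove the $H^1$ endpoint explicitly via a Scott--Zhang argument (which is the standard proof behind the result the paper simply cites from Ern--Guermond) and you add a discussion of $h$-uniformity of the constants, which is a reasonable refinement but not a different method.
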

\begin{proof}
The projection operator $\pi_h$ is linear and maps $H^0(\Gamma_h)$ to $H^0(\Gamma_h)$ and $H^1(\Gamma_h)$ to $H^1(\Gamma_h)$. Furthermore, for $v_0 \in H^0(\Gamma_h)$ and $v_1 \in H^1(\Gamma_h)$, we have that $\|\pi_h(v_0)\|_{0,\Gamma_h} \le C_0 \|v_0\|_{0,\Gamma_h}$ and $\|\pi_h(v_1)\|_{1,\Gamma_h} \le C_1 \|v_1\|_{1,\Gamma_h}$; see \cite[Lemma 1.131]{Ern_04_BOOK}.
Using the operator interpolation theory and in particular \cite[Proposition 14.1.5]{brenner2007mathematical}, we have that $\pi_h$ maps $H^{1/2}(\Gamma_h)$ to $H^{1/2}(\Gamma_h)$ and that its norm satisfies
$$
\|\pi_h\|_{H^{1/2} \rightarrow H^{1/2}} \le \|\pi_h\|_{H^0 \rightarrow H^0}^{\frac12} \|\pi_h\|_{H^1 \rightarrow H^1}^{\frac12} \le \sqrt{C_0 C_1}.
$$
Hence, for $w \in H^{1/2}(\Gamma_h)$ we have 
$$
\|\pi_h w \|_{1/2, \Gamma_h} \leq \|\pi_h\|_{H^{1/2} \rightarrow H^{1/2}} \| w \|_{1/2, \Gamma_h} \leq \sqrt{C_0 C_1} \| w \|_{1/2, \Gamma_h}.
$$
\end{proof}


\section{Proofs of well-posedness results}\label{wpp}

\subsection{Proof of Theorem \ref{theorem: Dirichlet weak coercivity}}\label{dwc}

The bound \eqref{eqn: Dirichlet boundedness} for $u,v \in {V^k_h}$ is derived simply by seeing that 
\begin{equation*}
\begin{aligned}
	B_{h, D}^\scal(u,v) &= \WTD(u, v-\mathcal R_h v_\star) + \scalh \big< T^k_h(u) \big|_{\etab(\xib)}, v \big>_{\Gamma_h}\\
	&\leq C_{{\overline p, \mathcal R_h}} | u |_{1, \Omega_h} \|v\|_{1, \Omega_h} + \scalh \| T^k_h(u) \big|_{\etab(\xib)} \|_{0,\Gamma_h} |v|_{1, \Omega_h},
\end{aligned}
\end{equation*}   
where use is made of the trace inequality and Lemma \ref{lm: Taylor discrete bound}.

To show that \eqref{eqn: Dirichlet coercivity condition} holds under the hypotheses of the theorem, note that, after applying Lemma \ref{lm: Taylor discrete bound}, we have
\begin{equation*}
\begin{aligned}
	\big< T^k_h(u) \big|_{\etab(\xib)}, u \big>_{\Gamma_h} &= 
	\int_{\Gamma_h} u^2 ds + \big<\ T^k_h(u) \big|_{\etab(\xib)} - u, u \big>_{\Gamma_h} 
	\\&
	\geq
	\|u \|^2_{0,\Gamma_h} -  \big\|  T^{1,k}_h(u) \big|_{\etab(\xib)}\big\|_{0,\Gamma_h}  \|u \|_{0,\Gamma_h} \\
	&
	\geq
	\|u \|^2_{0,\Gamma_h} - C  \Big(\sum^k_{|\alphab| = 1} h^{\frac12-|\alphab|} \deltah^{|\alphab|} \Big)|u|_{1,\Omega_h} \|u \|_{0,\Gamma_h}.
\end{aligned}
\end{equation*}
Second, after applying the Cauchy-Schwarz inequality, Young's inequality, and the inverse inequality, we have that
\begin{equation*}
\begin{aligned}
	&{\WTD}(u, u - \mathcal R_h u_\star) = \WTD(u,u) - \WTD(u, \mathcal R_h u_\star) \\
		&\qquad
		\geq {\urho} |u|^2_{1, \Omega_h} - {\orho} \|\mathcal R_h \||u|_{1, \Omega_h} \|u\|_{1/2, \Gamma_h} 
		\geq \frac{3}{4} {\urho} |u|^2_{1, \Omega_h} - \frac{C_{inv} {\orho} \| \mathcal R_h \|}{{\urho} h} \| u \|^2_{0, \Gamma_h}.
\end{aligned}
\end{equation*}
We then have that
\begin{equation*}
\begin{aligned}
	B^\scal_{h,D}(u,u) &\geq  \frac{3}{4}{\urho} |u|^2_{1, \Omega_h} + \Big(\scalh  - \frac{C_{inv} {\orho} \| \mathcal R_h \|}{{\urho} h}\Big) \| u \|^2_{0, \Gamma_h} \\
	&\qquad- C \scalh \Big (\sum^k_{|\alphab| = 1} h^{\frac12-|\alphab|} \deltah^{|\alphab|} \Big)|u|_{1,\Omega_h} \|u \|_{0,\Gamma_h}.
\end{aligned}
\end{equation*}
If we assume that $\scalh = \dfrac{C_\scal}{h}$ with $C_\scal \ge \frac{C_{inv} \orho \| \mathcal R_h \|}{\urho}$, we have by the Friedrich's inequality
$$
\frac{3}{4}\urho |u|^2_{1, \Omega_h} + \Big(\scalh  - \frac{C_{inv} \orho \| \mathcal R_h \|}{\urho h}\Big) \| u \|^2_{0, \Gamma_h} \ge c\| u\|_{1, \Omega_h}^2;
$$
see \cite[Lemma B.63]{Ern_04_BOOK}. Hence 
\begin{equation*}
	B^\scal_{h, D}(u,u) \geq  c\| u\|_{1, \Omega_h}^2 -  C C_{\scal} \Big (\sum^k_{|\alphab| = 1} h^{-\frac12-|\alphab|} \deltah^{|\alphab|} \Big)|u|_{1,\Omega_h} \|u \|_{0,\Gamma_h}.
\end{equation*}
The coercivity condition {\eqref{eqn: Dirichlet coercivity condition}} is achieved given $\delta_h \sim o(h^{\frac32})$ and $h$ small enough.

%
We now prove the stability bound \eqref{eqn: Dirichlet data control}. Let $u_h$ denote the solution to \eqref{eqn: PEF Dirichlet problem} and $\pi_h: L^2(\Gamma_h) \rightarrow W^k_h$ denote the $L^2(\Gamma_h)$ projection operator onto $W^k_h$. Equation \eqref{eqn: PEF Dirichlet problem}$_2$ is equivalent to
\begin{equation*}
	u_h(\xib) = \pi_h \big[ g_D \circ \etab(\xib)\big]  - \pi_h \Big[  T^{1,k}_h(u) \big|_{\etab(\xib)}\Big].
\end{equation*}
Because of this we have that 
\begin{equation} \label{eqn: inequality dirichlet 1}
\begin{aligned}
	\| u_h \|_{1/2, \Gamma_h} &\le \|\pi_h  [ g_D \circ \etab(\xib) ] \|_{1/2, \Gamma_h}  + \Big\| \pi_h \Big[  T^{1,k}_h(u_h) \big|_{\etab(\xib)}\Big] \Big\|_{1/2, \Gamma_h}\\
                              &  \le\|\pi_h [ g_D \circ \etab(\xib) ] \ \|_{1/2, \Gamma_h}  +C  h^{-\frac12} \Big \| \pi_h \Big[  T^{1,k}_h(u_h) \Big|_{\etab(\xib)} \Big] \Big\|_{0, \Gamma_h}\\
                                  &\le C \Big( \Big\| g_D \circ \etab(\xib) \Big\|_{1/2, \Gamma_h}  +  h^{-\frac12}\Big\|   T^{1,k}_h(u_h) \Big|_{\etab(\xib)}\Big\|_{0, \Gamma_h} \Big)\\
                                  &\le C \Big( \Big\| g_D \circ \etab(\xib) \Big\|_{1/2, \Gamma_h}  +      \big(\sum^k_{|\alphab| = 1} h^{-|\alphab| - \frac12} \deltah^{|\alphab|} \big) |u_h|_{1,\Omega_h} \Big)
\end{aligned}
\end{equation}
{after utilizing the inequality $\|\pi_h w\|_{1/2, \Gamma_h} \leq C \| w \|_{1/2, \Gamma_h}$; see Lemma \ref{continuity of projection}, Lemma \ref{lm: Taylor discrete bound}, and the inverse inequality.}

Consider now that
\begin{equation}\label{rhoP}
\begin{aligned}
 &\urho |u_h|^2_{1, \Omega_h} - \orho \|\mathcal R_h \||u_h|_{1, \Omega_h} \|u_h\|_{1/2, \Gamma_h}
\\&\qquad
\leq \WTD(u_h, u_h -  \mathcal R_h (u_h)_\star) = \langle \widetilde f,\, u_h - \mathcal R_h (u_h)_\star\rangle_{\Omega_h}.
\end{aligned}
\end{equation}
From this we obtain
\begin{equation}  \label{eqn: inequality dirichlet 2}
    |u_h|_{1, \Omega_h}^2  \leq C (\| \widetilde f \|_{-1, \Omega_h} + \| u_h\|_{1/2, \Gamma_h} ) \|u_h\|_{1, \Omega_h}.
\end{equation}
Adding $\| u_h\|_{1/2, \Gamma_h}^2$ to both sides of the inequality, using the trace inequality, and redefining the constant $C$, we obtain
$$
|u_h|_{1, \Omega_h}^2 + \| u_h\|_{1/2, \Gamma_h}^2  \leq C (\| \widetilde f \|_{-1, \Omega_h} + \| u_h\|_{1/2, \Gamma_h} ) \|u_h\|_{1, \Omega_h}
$$
We have that $\|u_h\|_{1, \Omega_h}^2 \le C (|u_h|_{1, \Omega_h}^2 + \| u_h\|_{1/2, \Gamma_h}^2)$; see Thanks to \cite[Lemma B.63]{Ern_04_BOOK}. Therefore,
$$
\|u_h\|_{1, \Omega_h} \le C (\| \widetilde f \|_{-1, \Omega_h} + \| u_h\|_{1/2, \Gamma_h} ).
$$
Subsequently substituting $\|u_h\|_{1/2, \Gamma_h}$ for its upper bound in \eqref{eqn: inequality dirichlet 1} yields
\begin{equation*}
	\|u_h\|_{1, \Omega_h} - C  \Big(\sum^k_{|\alphab| = 1} h^{-|\alphab| -\frac12} \deltah^{|\alphab|} \Big)
						\|u_h \|_{1, \Omega_h}  \leq C\left(\| \widetilde f \|_{-1, \Omega_h} + \| g_D \circ \etab(\xib) \|_{1/2, \Gamma_h}\right).
\end{equation*}
It then follows that \eqref{eqn: Dirichlet data control} is satisfied if  $\delta_h \sim o(h^{\frac32})$ and $h$ is small enough.

\subsection{Proof of Theorem \ref{theorem: Neumann well--posednessn}}\label{nwc}

Recall that $B_{h,N}(u,v) = \WTN(u,v) + \boldsymbol{\tau}_{N}(u,v)$, where
$$\boldsymbol{\tau}_N(u, v) := \big< \widetilde p\circ \etab(\xib)\;  \mathbf T_h^{k-1} ( \nabla u) \big|_{\etab(\xib)} \cdot \mathbf n - \widetilde p(\xib)\nabla u \cdot \nn_h, v \big>_{\Gamma_h}.$$ 
We have
\begin{equation*}
\begin{aligned}
	|\boldsymbol{\tau}_N(u, v)|
        &\le \left|\left< \left(\widetilde p\circ \etab(\xib) - \widetilde p(\xib)\right)\nabla u \cdot \nn_h, v \right>_{\Gamma_h}\right| + \Big|\Big< \widetilde p\circ \etab(\xib)\nabla u \cdot (\nn - \nn_h), v \Big>_{\Gamma_h}\Big|\\
       & \qquad+  \Big| \Big< \widetilde p\circ \etab(\xib)  \Big (  \mathbf T_h^{k-1} ( \nabla u) \Big|_{\etab(\xib)} - \nabla u \Big)\cdot \mathbf n , v  \big>_{\Gamma_h}\Big|. \\
\end{aligned}
\end{equation*}
Because $|\widetilde p(\etab(\xib)) - \widetilde p(\xib) | \le C \deltah \|\widetilde p\|_{C^1(\overline{\Omega}_h)}$ and $|\widetilde p(\etab(\xib))(\nn - \nn_h)| \le Ch \|\widetilde p\|_{C^0(\overline{\Omega}_h)}$, we have that
\begin{equation}\label{eqn: tau bound}
\begin{aligned}
	|\boldsymbol{\tau}_N(u, v)|
        &\le C_p (\deltah + h)\|\nabla u\|_{0,\Gamma_h} \|v\|_{0, \Gamma_h} \\\
       & + C_p \Big\|  \mathbf T_h^{k-1} ( \nabla u) \Big|_{\etab(\xib)} - \nabla u  \Big\|_{0,\Gamma_h} \|v\|_{0, \Gamma_h}.
\end{aligned}
\end{equation}
Lemma \ref{lm: u approx} implies that $\|\nabla u\|_{0,\Gamma_h} \le C h^{-\frac12} \|\nabla u\|_{0,\Omega_h} + h^{\frac{3}{2}} \trino \nabla u\trino _{2,\Omega_h}$. Combining this with Lemma \ref{lm: Taylor approximation 2} yields the upper bound
$$
\begin{aligned}
|\boldsymbol{\tau}_N(u,v)|
	&\le C\Big[ ( \delta_h h^{-\frac12} + h^\frac12  ) \|\nabla u\|_{0, \Omega_h} + 
		 ( \delta_h h^\frac32 + h^\frac52  ) \trino \nabla u \trino_{2, \Omega_h} \\
	&\qquad + \delta_h^\frac12 \trino \nabla u \trino_{1, \Omega_h} + \delta_h^{k-\frac12} \trino \nabla u \trino_{k, \Omega_h}\Big] \|v\|_{0, \Gamma_h},
\end{aligned}
$$
{where \eqref{eqn: Neumann continuity bound in H1} is derived considering that $B_{h,N}(u,v) = {\WTN}(u,v) + \boldsymbol{\tau}_N(u, v)$, ${\WTN}(u,v) \le M \| u \|_{1, \Omega_h} \| v \|_{1, \Omega_h}$, and the assumption that $\delta_h \sim o(h^{\frac32})$.}

If $u \in V^k_h$, the discrete continuity bound \eqref{eqn: Neumann continuity bound} is derived by proceeding as in the above paragraph but now  using Lemma \ref{lm: Taylor discrete bound} in \eqref{eqn: tau bound} to see that
\begin{equation}\label{eqn: Discrete term bound}
\begin{aligned}
\Big\|   \mathbf T_h^{k-1} ( \nabla u) \Big|_{\etab(\xib)} - \nabla u  \Big\|_{0,\Gamma_h}
&= \Big\|    \mathbf T_h^{1,k-1} ( \nabla u) \Big|_{\etab(\xib)} \Big\|_{0,\Gamma_h}\\ 
&\leq C_P\Big( \sum^k_{|\alphab|=1}h^{-\frac12 - |\alphab|} \delta_h^{|\alphab|} \Big) 
\leq C \|u\|_{0, \Omega_h},
\end{aligned}
\end{equation}
given that $\delta_h \sim o(h^{\frac32})$.

We now show that \eqref{eqn: Neumann coercivity} holds. We have that 
\begin{equation} \label{eqn: lower bound 2}
	{\WTN}(u, u) \geq C_{\urho, \ukappa} \|u\|^2_{1, \Omega_h} \quad \forall u \in V^k_h,
\end{equation}
where $C_{\urho, \ukappa} = \min(\urho,\ukappa)$. Using \eqref{eqn: lower bound 2}, the discrete bound on $|\boldsymbol{\tau}_N(u, v)|$ (i.e., \eqref{eqn: tau bound} with \eqref{eqn: Discrete term bound}), and the discrete trace inequality $\|v\|_{0,\Gamma_h} \leq C h^{-\frac12}\|v\|_{0, \Omega_h}$ for all $ v \in V^k_h$, yields
\begin{equation*}
	B_{h, N}(u,u) \geq \Big[C_{ \urho, \ukappa} - C_\orho \Big(
	 (\deltah h^{-\frac12} + h^\frac12)  + \sum^k_{|\alphab| = 1} h^{-\frac12-|\alphab|} \deltah^{|\alphab|} \Big) \Big] \|u\|^2_{1, \Omega_h}
\end{equation*}
for all $u \in V^h_k$. This implies \eqref{eqn: Neumann coercivity} if $\delta_h \sim o(h^\frac32)$.

Finally, using \eqref{eqn: Neumann coercivity}, one obtains
\begin{equation*}
\begin{aligned}
	\| u_h \|_{1, \Omega_h}^2 &\leq \gamma_N^{-1}B_{h, N}(u_h, u_h) 
	= {\gamma_N^{-1}}\left(F_h(u_h) + \left<g_N\circ \etab(\xib), u_h\right>_{\Gamma_h}\right)\\
	&\leq C {\gamma_N^{-1}}\left( \| \widetilde f \|_{-1, \Omega_h} + \|g_N\circ\etab(\xib) \|_{-1/2, \Gamma_h} \right) \|u_h\|_{1, \Omega_h}
\end{aligned}
\end{equation*}
and hence \eqref{eqn: Neumann data control} is satisfied.


\section{Proofs of error estimates}\label{sec:aeee}

In this section, we provide the proofs of the error estimates given in Theorems \ref{theorem: Dirichlet PE-FEM H1}, \ref{theorem: Dirichlet PE-FEM L2}, and \ref{theorem: Neumann PE-FEM H1}.

\subsection{Proof of Theorem \ref{theorem: Dirichlet PE-FEM H1}}\label{sec:aeeed}

We assume that \eqref{eqn: weak Dirichlet} has a solution $u\in H^{k+1}(\Omega)$ with an extension $\tu \in H^{k+1}(\mathbb R^\DDD)$. We let $\widehat f = L_D \widetilde u$, where $L_D$ denotes the strong operator in \eqref{contprobd}. The error analysis follows the familiar strategy of breaking up $u_h$ into a sum of an interpolant $u_I\in V^k_h$ of  $\widetilde u$ and a discrete error term $w_h = u_h - u_I$. We start with a preliminary result that is central to showing optimal convergence in the $H^1(\Omega_h)$-norm.

\begin{lemma} \label{lm: stability for error bound}
Let $u_h \in V^k_h$ denote the solution of the Dirichlet PE-FEM problem \eqref{eqn: PEF Dirichlet problem} and assume that $g_D \circ \etab(\xib) \in L^2(\Gamma_h)$. Then, we have the stability bound
\begin{equation}
	\| u_h\|_{1, \Omega_h} \leq C\big( \|\widetilde f\|_{-1, \Omega_h} + h^{-\frac12} \|g_D\circ \etab(\xib)\|_{0, \Gamma_h} \big).
\end{equation}
\end{lemma}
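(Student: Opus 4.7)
The plan is to reuse the stability proof for Theorem~\ref{theorem: Dirichlet weak coercivity} essentially verbatim, changing only the way the discrete boundary equation is converted into a bound on $\|u_h\|_{1/2,\Gamma_h}$. Instead of invoking the $H^{1/2}(\Gamma_h)$-continuity of $\pi_h$ (Lemma~\ref{continuity of projection}), I would apply the $L^2$-contractivity of $\pi_h$ together with the discrete inverse inequality $\|v_h\|_{1/2,\Gamma_h}\le Ch^{-1/2}\|v_h\|_{0,\Gamma_h}$ on $W_h^k$. This concedes a factor of $h^{-1/2}$ but lets the data enter in the weaker $L^2(\Gamma_h)$ norm, which is precisely the trade-off claimed.

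Following the derivation of \eqref{eqn: inequality dirichlet 1}, the second equation of \eqref{eqn: PEF Dirichlet problem} is equivalent on $\Gamma_h$ to $u_h(\xib)=\pi_h[g_D\circ\etab(\xib)]-\pi_h\bigl[T^{1,k}_h(u_h)\big|_{\etab(\xib)}\bigr]$. The $L^2$-contractivity of $\pi_h$ and estimate \eqref{eqn: Taylor discrete bound 2} of Lemma~\ref{lm: Taylor discrete bound} give
\begin{equation*}
\|u_h\|_{0,\Gamma_h}\le \|g_D\circ\etab(\xib)\|_{0,\Gamma_h} + C\Big(\sum_{|\alphab|=1}^k h^{-|\alphab|+\frac12}\deltah^{|\alphab|}\Big)|u_h|_{1,\Omega_h},
\end{equation*}
and the inverse inequality then yields $\|u_h\|_{1/2,\Gamma_h}\le C\bigl(h^{-\frac12}\|g_D\circ\etab(\xib)\|_{0,\Gamma_h}+\sum_{|\alphab|=1}^k h^{-|\alphab|}\deltah^{|\alphab|}|u_h|_{1,\Omega_h}\bigr)$.

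Next, the unchanged energy argument of \eqref{rhoP}--\eqref{eqn: inequality dirichlet 2}---testing the bulk equation with $u_h-\mathcal R_h(u_h)_\star$ and then using $\|u_h\|_{1,\Omega_h}^2\le C(|u_h|_{1,\Omega_h}^2+\|u_h\|_{1/2,\Gamma_h}^2)$ via Friedrichs' inequality---produces $\|u_h\|_{1,\Omega_h}\le C(\|\widetilde f\|_{-1,\Omega_h}+\|u_h\|_{1/2,\Gamma_h})$. Substituting the boundary bound above gives
\begin{equation*}
\Big(1-C\sum_{|\alphab|=1}^k h^{-|\alphab|}\deltah^{|\alphab|}\Big)\|u_h\|_{1,\Omega_h}\le C\bigl(\|\widetilde f\|_{-1,\Omega_h} + h^{-\frac12}\|g_D\circ\etab(\xib)\|_{0,\Gamma_h}\bigr),
\end{equation*}
and the assumption $\deltah\sim o(h^{3/2})$ from Theorem~\ref{theorem: Dirichlet weak coercivity} renders the leading $h^{-1}\deltah$ term of the sum $o(h^{1/2})$, so the left-hand bracket stays bounded below by a positive constant for $h$ small enough.

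The main obstacle is bookkeeping rather than substance: one must verify that the single $h^{-1/2}$ loss due to the inverse inequality is not compounded elsewhere. In particular, the Taylor-remainder contribution picks up a matching $h^{-1/2}$---upgrading the $h^{-|\alphab|+\frac12}\deltah^{|\alphab|}$ factor from Lemma~\ref{lm: Taylor discrete bound} to $h^{-|\alphab|}\deltah^{|\alphab|}$---but under $\deltah\sim o(h^{3/2})$ the leading $h^{-1}\deltah$ term still vanishes with $h$, so the absorption step carries through exactly as in the original proof.
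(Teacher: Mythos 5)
Your proposal is correct and follows essentially the same route as the paper's proof: both rest on the energy estimate \eqref{rhoP}, the $L^2(\Gamma_h)$-contractivity of $\pi_h$ applied to the boundary identity together with Lemma \ref{lm: Taylor discrete bound}, a single application of the trace inverse inequality (which is where the $h^{-1/2}$ on the data comes from), Friedrichs' inequality, and absorption of the $\sum h^{-|\alphab|}\deltah^{|\alphab|}$ term for small $h$. The only difference is cosmetic ordering — the paper applies the inverse inequality inside the energy estimate to work with $\|u_h\|_{0,\Gamma_h}$ throughout, whereas you apply it to lift the boundary bound to $\|u_h\|_{1/2,\Gamma_h}$ — and both absorb under the same condition $\deltah/h\to 0$.
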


\begin{proof}
From \eqref{rhoP} we deduce that
\begin{equation*}
	\urho |u_h|^2_{1, \Omega_h} - C\orho |u_h|_{1, \Omega_h} \|u_h\|_{1/2, \Gamma_h} \leq C \|\widetilde f\|_{-1, \Omega_h} \|u_h\|_{1, \Omega_h}.
\end{equation*}
Applying the inverse inequality and Cauchy's inequality gives us
\begin{equation} \label{eqn: bound a}
	\frac{\urho}{2} |u_h|^2_{1, \Omega_h} - \frac{C h^{-1}}{2\urho} \|u_h\|_{0, \Gamma_h}^2 \leq C \|\widetilde f\|_{-1, \Omega_h} \|u_h\|_{1, \Omega_h}.
\end{equation}
Because we have that 
\begin{equation*}
	 u_h |_{\Gamma_h} = \pi_h \Big[ g_D\circ\etab(\xib) -  T^{1, k}_h (u_h)\big|_{\etab(\xib)} \Big],
\end{equation*}
where $\pi_h(\cdot): L^2(\Gamma_h)\rightarrow W^k_h$ is the $L^2(\Gamma_h)$ projection operator into $W^k_h$, it follows from Lemma \ref{lm: Taylor discrete bound} and the continuity of the projection operator in the $L^2(\Gamma_h)$ topology that
\begin{equation} \label{eqn: bound 2}
	\|u_h\|_{0, \Gamma_h} \leq C\sum_{|\alphab|=1} h^{ {-|\alphab| + \frac12}} \delta_h^{|\alphab|} |u_h|_{1, \Omega_h} + C \|g_D \circ \etab(\xib) \|_{0, \Gamma_h}.
\end{equation}
Inserting this inequality into \eqref{eqn: bound a} yields
\begin{equation*}
	\Big(\frac{\urho^2}{4} - \big(C\sum_{|\alphab|=1} h^{{-|\alphab|}} \deltah^{|\alphab|} \big)^2\Big) |u_h|_{1, \Omega_h}^2 \leq C \| \widetilde f\|_{-1, \Omega_h} \|u_h\|_{1, \Omega_h} + \frac{C h^{-1}}{2\urho} \|g_D \circ \etab(\xib) \|_{0, \Gamma_h}^2.
\end{equation*}
Adding \eqref{eqn: bound 2} to this inequality and subsequently using Friedrich's inequality (i.e. $C\|u\|_{1, \Omega_h}^2 \leq \|u\|_{0, \Gamma_h}^2 + |u|_{1, \Omega_h}^2 $) and the Cauchy inequality yields
\begin{equation*}
	\|u_h\|^2_{1, \Omega_h} \leq C\big( \|\widetilde f \|_{-1, \Omega_h}^2 + h^{-1} \| g_D\circ\etab(\xib)\|_{0, \Gamma_h}^2 \big)
\end{equation*}
after applying the assumption that $\delta_h \sim o(h)$.
\end{proof} 

We proceed with the main result.

\vskip5pt
\paragraph{\textbf{Proof of Theorem \ref{theorem: Dirichlet PE-FEM H1}}}
Application of the triangle inequality yields an error bound
\begin{equation}
	\|\widetilde u - u_h\|_{1, \Omega_h} \leq \|\widetilde u - u_I\|_{1, \Omega_h} + \|w_h\|_{1, \Omega_h}
\end{equation}
in terms of the \emph{interpolation error} and the \emph{discrete error}, where $w_h=u_h-u_I$. Standard interpolation results imply optimal convergence of the first term. Thus, to complete the proof it remains to show that the discrete error is also optimal. By linearity, $w_h \in V^k_h$ satisfies the equation
\begin{equation*}
\begin{aligned}
	B^{\scal}_{h,D}(w_h, v) &= \left< \widetilde f - \left[-\nabla \cdot \left(\widetilde p(x) \nabla u_I\right)\right], v - \mathcal R_h v_\star \right>_{\Omega_h} \\
			&\qquad+ \theta_h \left< g_D \circ \etab(\xib) - \left.T^k_h (u_I) \right|_{\etab(\xib)}, v  \right>_{\Gamma_h} \quad \forall v \in V^k_h
\end{aligned}
\end{equation*}
so that Lemma \ref{lm: stability for error bound} implies the stability bound
\begin{equation}\label{whest}
\begin{aligned}
	\|w_h\|_{1, \Omega_h} &\leq C \Big\{ \| \widetilde f  - \left[-\nabla \cdot  \left(\widetilde p\nabla u_I\right)\right] \|_{-1, \Omega_h} 
	\\&\qquad+ h^{-\frac12}\|g_D \circ \etab(\xib) - \left.T^k_h (u_I) \right|_{\etab(\xib)}\|_{0, \Gamma_h} \Big\}.
\end{aligned}
\end{equation}
Recalling that $\widehat f = L_D\widetilde u$, the first term on the right-hand side of \eqref{whest} satisfies the bound 
\begin{equation} \label{eqn: bound 1}
	\| \widetilde f - \left[-\nabla \cdot \left(\widetilde p\nabla u_I\right)\right] \|_{-1, \Omega_h} \leq \| \widetilde f - \widehat f \|_{-1, \Omega_h} + 
		\|\nabla \cdot \left[\widetilde p\nabla(\widetilde u - u_I)\right]\|_{-1,\Omega_h}.
\end{equation}
From Lemma \ref{lem:ext error}, we have that
\begin{equation*}
	\| \widetilde f - \widehat f\|_{-1, \Omega_h} = \sup_{\substack{\chi \in H^1_0(\Omega_h)\\\|\chi\|_{1, \Omega_h} = 1}} \left<  \widetilde f - \widehat f, \chi\right>_{\Omega_h}\leq 
	\begin{cases}
	C_{k, d} \deltah^{ k - \frac12}\|f\|_{k-1, \Omega} &\quad \textrm{if $d = 2$} \\
	C_{k,d} \deltah^{k-1}\|f\|_{k-1, \Omega} &\quad \textrm{if $d = 3$},
	\end{cases}
\end{equation*}
whereas standard interpolation theory implies that
\begin{equation*}
\begin{aligned}
	\|\nabla \cdot \left[\widetilde p\nabla(\widetilde u - u_I)\right]\|_{-1,\Omega_h} &=   \sup_{\substack{\chi \in H^1_0(\Omega_h)\\\|\chi\|_{1, \Omega_h} = 1}} -  \left< \widetilde p \nabla(\tu - u_I), \nabla \chi \right>_{\Omega_h}\\
	&\leq C\|\widetilde u - u_I\|_{1, \Omega_h} \leq Ch^{k}\| u \|_{k+1, \Omega}
\end{aligned}
\end{equation*}
after applying the extension bound, i.e., $\|\widetilde u\|_{k+1, \Omega_h} \leq C\| u\|_{k+1, \Omega}$.
As a result, 
\begin{equation} \label{eqn: first term bound}
	 \| \widetilde f - \left[-\nabla\cdot \left( \widetilde p \nabla u_I\right)\right]\|_{-1, \Omega_h}  \leq Ch^k \left(\| u \|_{k+1, \Omega} + \|f\|_{k-1, \Omega}  \right),
\end{equation}
after recalling that $\deltah \sim o(h^\frac32)$ if $d=2$ and $\deltah \sim O(h^2)$, if $d = 3$.
 
We now take $\deltah \sim O(h^\frac32)$ for $d=2,3$ to simplify the remainder of the proof. To estimate the second term on the righthand side of \eqref{whest} we first see that
\begin{equation*}
\begin{aligned}
	&\|g_D\circ \etab(\xib) - \left.T^k_h(u_I)\right|_{\etab(\xib)} \|_{0, \Gamma_h} \\
	&\qquad\leq 
		\|g_D\circ\etab(\xib) - \left.T^k_h(\widetilde u)\right|_{\etab(\xib)} \|_{0, \Gamma_h}
		+ \|\left.T^k_h(\widetilde u - u_I)\right|_{\etab(\xib)} \|_{0, \Gamma_h}.
\end{aligned}
\end{equation*}
By Lemma \ref{lm: Taylor approximation}, we have that 
\begin{equation*}
\begin{aligned}
	\|g_D\circ \etab(\xib) - \left.T^k_h(\widetilde u)\right|_{\etab(\xib)} \|_{0, \Gamma_h} &=
	\|\widetilde u\circ \etab(\xib) - \left.T^k_h(\widetilde u)\right|_{\etab(\xib)} \|_{0, \Gamma_h}\\
	&\leq C_{\Omega_h}\deltah^{k+\frac12}|\widetilde u|_{k+1, \mathbb R^\DDD} \\
	&\leq C_{\Omega_h}h^{\frac{3k}{2} + \frac34}\|u\|_{k+1, \Omega},
\end{aligned}
\end{equation*}
after applying the assumption $\delta \sim o(h^\frac32)$ in Theorem \ref{theorem: Dirichlet weak coercivity} and the extension bound. Additionally, let $e_I = \widetilde u - u_I$ be the interpolation error defined over $\Omega_h$ , then we have that
\begin{equation*}
	\|\left. T^k_h(e_I)\right|_{\etab(\xib)} \|_{0, \Gamma_h} \leq 
		\|\left. T^k_h( e_I) \right|_{\etab(\xib)} - e_I \|_{0, \Gamma_h} 
		+ \|e_I \|_{0, \Gamma_h}.
\end{equation*}
By Lemma \ref{lm: Taylor approximation 2}, we have that
\begin{equation*}
\begin{aligned}
	\|\left. T^k_h( e_I) \right|_{\etab(\xib)} - e_I \|_{0, \Gamma_h} &\leq 
	C_{\Omega_h} \left( \deltah^\frac12 \|e_I\|_{1, \Omega_h} + \deltah^{k+\frac12} \|e_I\|_{k+1, \Omega_h} \right)\\
	&\leq C_{\Omega_h} \left(\deltah^{\frac12}h^k + \deltah^{k+\frac12} \right) \|u\|_{k+1, \Omega} \\
	&\leq C_{\Omega_h} h^{k+\frac34} \|u\|_{k+1, \Omega},
\end{aligned}
\end{equation*}
where we have again applied the assumption that $\delta \sim o(h^\frac32)$ and the extension bound. Further, using the following trace inequality \cite[Theorem 1.6.6]{brenner2007mathematical} for $v \in H^1(\mathcal D)$ and $\mathcal D \subset \mathbb R^\DDD$ a Lipschitz domain
\begin{equation} \label{eqn: Necas trace inequality}
	\|v \|_{0, \partial \mathcal D} \leq  C \|v\|_{0, \mathcal D}^\frac12\|v\|_{1, \mathcal D}^\frac12,
\end{equation} 
we have that
\begin{equation*}
	\|e_I\|_{0, \Gamma_h} \leq C_{\Omega_h} \|e_I\|^\frac12_{0, \Omega_h} \|e_I\|^\frac12_{1, \Omega_h}
		\leq C_{\Omega_h} h^{k+\frac12}\|u\|_{k+1, \Omega},
\end{equation*}
after applying the standard interpolation error bounds and extension bounds. Thus, from this analysis, we have that
\begin{equation} \label{eqn: second term bound}
	\|g_D\circ\etab(\xib) - \left. T^k_h(u_I) \right|_{\etab(\xib)} \|_{0, \Gamma_h} \leq Ch^{k+\frac12} \|u\|_{k+1, \Omega}.
\end{equation}
Combining \eqref{eqn: first term bound} and \eqref{eqn: second term bound} yields the optimal bound
\begin{equation}\label{eqn: perturbation error}
	\|w_h\|_{1, \Omega_h} \leq Ch^k\left\{ \|u\|_{k+1, \Omega} + \|f \|_{k-1, \Omega} \right\}
\end{equation}
for the discrete error which completes the proof.

\subsection{Proof of Theorem \ref{theorem: Dirichlet PE-FEM L2}}\label{sec:aeeed2}

Our strategy is to couch the PE-FEM problem into a standard Dirichlet finite element formulation, under the additional assumption that $u\in H^{k+\frac32}(\Omega)$ and $f \in H^{k}(\Omega)$ (and thus, $\widetilde u \in H^{k+\frac32}(\mathbb R^\DDD)$ and $\widetilde f \in H^k(\mathbb R^\DDD)$), and then apply the well-known Aubin-Nitsche duality argument to the latter.

We begin with two technical lemmas.

\begin{lemma} \label{lm: perturbation trace bound}
Assume the hypotheses in Theorem \ref{theorem: Dirichlet weak coercivity}, and in addition assume that $u \in H^{k+\frac32}(\Omega)$ and $\deltah \sim O(h^2)$. Then, the trace of the discrete error $(w_h)_\star \in W^k_h$ satisfies the bound
	\begin{equation}
		\|w_h\|_{0, \Gamma_h} \leq Ch^{{k+1}}\left( \|u\|_{k+1, \Omega} + |\widetilde u|_{k+1, \Gamma_h} + \|f\|_{k-1, \Omega} \right).
	\end{equation}
\end{lemma}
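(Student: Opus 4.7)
The plan is to derive an explicit representation of $w_h|_{\Gamma_h}$ from the boundary equation of the PE-FEM Dirichlet problem and then bound the resulting pieces individually. Testing the second equation of \eqref{eqn: PEF Dirichlet problem} against $\mu\in W^k_h$ with $u_h$ and subtracting the analogous Taylor identity centered on $u_I$, using the splitting $T^k_h v|_{\etab(\xib)} = v(\xib) + T^{1,k}_h v|_{\etab(\xib)}$, that $u_I|_{\Gamma_h}\in W^k_h$, and that $g_D\circ\etab = \widetilde u\circ\etab$ on $\Gamma_h$, I would obtain
\[
w_h|_{\Gamma_h} = \pi_h\Bigl[\bigl(\widetilde u\circ\etab - T^k_h\widetilde u|_{\etab}\bigr) + T^k_h e_I|_{\etab} - T^{1,k}_h w_h|_{\etab}\Bigr],
\]
where $e_I := \widetilde u - u_I$ and $\pi_h : L^2(\Gamma_h)\to W^k_h$ is the $L^2$ projection, which has unit operator norm on $L^2(\Gamma_h)$. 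The proof thus reduces to controlling the $L^2(\Gamma_h)$ norms of the three bracketed pieces.

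Under $\deltah\sim O(h^2)$ each piece can be handled in turn. For the first, Lemma \ref{lm: Taylor approximation} with $m=0$ yields $C\deltah^{k+\frac12}|\widetilde u|_{k+1,\mathbb R^\DDD}= O(h^{2k+1})\|u\|_{k+1,\Omega}$. For the third, Lemma \ref{lm: Taylor discrete bound} gives $C h^{3/2}|w_h|_{1,\Omega_h}$, and the $H^1$ bound of Theorem \ref{theorem: Dirichlet PE-FEM H1} upgrades this to $O(h^{k+3/2})\bigl(\|u\|_{k+1,\Omega} + \|f\|_{k-1,\Omega}\bigr)$. The middle piece carries the genuine $h^{k+1}$ rate. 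Writing $T^k_h e_I|_{\etab}(\xib) = e_I(\xib) + T^{1,k}_h e_I|_{\etab}(\xib)$, I would use the crucial observation that because the nodes of the Lagrange element $V^k_h$ lie on $\Gamma_h$, the restriction $u_I|_{\Gamma_h}$ is itself the $(d-1)$-dimensional Lagrange interpolant of $\widetilde u|_{\Gamma_h}$; the extra regularity $u\in H^{k+\frac32}(\Omega)$ combined with the trace theorem yields $\widetilde u|_{\Gamma_h}\in H^{k+1}(\Gamma_h)$, so standard surface interpolation gives $\|e_I\|_{0,\Gamma_h}\le C h^{k+1}|\widetilde u|_{k+1,\Gamma_h}$. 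The remaining $T^{1,k}_h e_I|_{\etab}$ contribution is handled by $|\etab-\xib|\le\deltah$, the scaled trace inequality $\|\phi\|_{0,\EEE_i}\le C\bigl(h^{-1/2}\|\phi\|_{0,\KKK_{j_i}} + h^{1/2}|\phi|_{1,\KKK_{j_i}}\bigr)$ applied piecewise to $D^\alpha e_I$, and the standard interpolation bounds $\|e_I\|_{s,\KKK_j}\le C h^{k+1-s}|\widetilde u|_{k+1,\KKK_j}$ for $s\le k+1$; the worst summand, corresponding to $|\alphab|=1$, produces only $O(h^{k+3/2})\|u\|_{k+1,\Omega}$.

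Assembling the three estimates and using the extension bound $\|\widetilde u\|_{k+1,\Omega_h}\le C\|u\|_{k+1,\Omega}$ yields the claimed inequality. The main technical obstacle is the middle piece: identifying that the boundary trace of the volumetric interpolant is itself a Lagrange interpolant on $\Gamma_h$ (so that $|\widetilde u|_{k+1,\Gamma_h}$ rather than the stronger $|\widetilde u|_{k+2,\Omega_h}$ appears on the right-hand side), and carefully distributing derivatives across the summands of $T^{1,k}_h e_I$ so that each contribution meets or beats the target $h^{k+1}$ rate. The remaining estimates are routine bookkeeping with the assumption $\deltah\sim O(h^2)$.
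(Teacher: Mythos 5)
Your proposal is correct and follows essentially the same route as the paper: the same $L^2$-projection representation of $w_h|_{\Gamma_h}$, the same three-way split (Taylor remainder of $\widetilde u$, Taylor extension of $e_I$, discrete term $T^{1,k}_h w_h$), the same appeal to the $H^1$ estimate for the last piece, and the same key observation that $\|e_I\|_{0,\Gamma_h}\le Ch^{k+1}|\widetilde u|_{k+1,\Gamma_h}$ via surface interpolation under the extra half-derivative of regularity. The only cosmetic caveat is that the identity $T^k_h e_I|_{\etab}=e_I(\xib)+T^{1,k}_h e_I|_{\etab}$ holds literally only for piecewise polynomials; for the non-polynomial $e_I$ one should instead invoke Lemma \ref{lm: Taylor approximation 2} (as the paper does), which packages exactly the bound you derive by hand and contributes only harmless higher-order terms.
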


\begin{proof}
First, we begin with a technical result that arises from the assumption that ${\widetilde u} \in H^{k+\frac32}(\mathbb R^\DDD)$ and $\deltah \sim O(h^2)$. Under these assumptions we may modify the analysis on the $\|g_D\circ \etab(\xib) - \left.T^k_h(\widetilde u)\right|_{\etab(\xib)} \|_{0, \Gamma_h}$ term in the proof of Theorem \ref{theorem: Dirichlet PE-FEM H1} so that
\begin{equation} \label{eqn: essential condition error}
	\|g_D\circ \etab(\xib) - \left.T^k_h({u_I})\right|_{\etab(\xib)} \|_{0, \Gamma_h} 
		\leq Ch^{k+1} \left( \|u\|_{k+1, \Omega} + |\widetilde u|_{k+1, \Gamma_h}\right).
\end{equation}
This result is immediate after seeing that
\begin{equation*}
	\|\widetilde u - u_I \|_{0, \Gamma_h} \leq Ch^{k+1} |\widetilde u|_{k+1, \Gamma_h},
\end{equation*}
and taking $\deltah \sim O(h^2)$. 

From \eqref{eqn: PEF Dirichlet problem}, we have that 
\begin{equation*}
	w_h = \pi_h\Big(g_D \circ \etab(\xib) -  T^k_h(u_I)\big|_{\etab(\xib)} -  T^{1,k}_h(w_h)\big|_{\etab(\xib)} \Big),
\end{equation*}
where $\pi_h: L^2(\Gamma_h) \rightarrow W^k_h$ is the $L^2$ projection onto $W^k_h$. Using the continuity of $\pi_h$  in $L^2(\Gamma_h)$, \eqref{eqn: essential condition error}, Lemma \ref{lm: Taylor discrete bound}, and \eqref{eqn: perturbation error}, we have that
\begin{equation*}
\begin{aligned}
	&\|w_h\|_{0, \Gamma_h} \leq C\bigg( \|g_D\circ\etab(\xb) - \left.T^k_h(u_I) \right|_{\etab(\xib)} \|_{0, \Gamma_h} + 
		\left\|\left.T^{1,k}_h(w_h)\right|_{\etab(\xib)} \right\|_{0, \Gamma_h}\bigg) \\
	&\quad\leq C\bigg( h^{k+1} \left(\|u\|_{k+1, \Omega} + |{\widetilde u}|_{k+1, \Gamma_h}\right)+ \left\|\left. T^{1,k}_h(w_h) \right|_{\etab(\xib)} \right\|_{0, \Gamma_h} \bigg) \\
	&\quad\leq C\bigg( h^{k+1} \left(\|u\|_{k+1, \Omega} + |{\widetilde u}|_{k+1, \Gamma_h}\right) + \sum^k_{|\alphab| = 1} h^{-|\alphab| + \frac12} \deltah^{|\alphab|} \|w_h\|_{1, \Omega_h} \bigg) \\
	&\quad\leq C \bigg( h^{k+1} \left(\|u\|_{k+1, \Omega} + |{\widetilde u}|_{k+1, \Gamma_h}\right) + \sum^k_{|\alphab| = 1} h^{k+|\alphab| + \frac12} \left( \|u\|_{k+1, \Omega} + \|f\|_{k-1, \Omega} \right) \bigg) \\
	&\quad\leq C h^{k+1} \left( \|u\|_{k+1, \Omega}+ |{\widetilde u}|_{k+1, \Gamma_h} + \|f\|_{k-1, \Omega} \right),
\end{aligned}
\end{equation*}
thereby proving the result of this lemma. 
\end{proof}

\begin{lemma} \label{lm: perturbation function}
Assume that $u \in H^{k+\frac32}(\Omega)$, $\delta_h \sim O(h^2)$, and that $\phi_h\in V^k_h$ satisfies the conditions
\begin{equation*}
		\left.\phi_h\right|_{\Gamma_h} = w_h, \quad
		\sup_{\xb \in \Omega_h} |\phi_h| = \sup_{\xib\in \Gamma_h} |w_h|, \quad
		\textrm{ and } \quad
		\phi_h = 0 \textrm{ over } \Omega_h^0,
\end{equation*}
where $ \Omega_h^0 := \left \{ \xb \in \KKK_n \, : \, \KKK_n \cap \Gamma_h = \emptyset  \right\}$. Then, 
\begin{equation*}
		\| \phi_h\|_{0, \Omega_h} \leq C h^{k+\frac32}(\|u\|_{k+1, \Omega} + |\widetilde u|_{k+1, \Gamma_h}+ \|f\|_{k-1, \Omega}) .
\end{equation*}
\end{lemma}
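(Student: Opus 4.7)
The plan is to exploit the support property of $\phi_h$—by condition (iii) it vanishes outside the narrow strip of elements adjacent to $\Gamma_h$—together with an element-local $L^2$-to-$L^2$ bound relating $\|\phi_h\|_{0,\KKK_n}$ to the trace $\|w_h\|_{0,\EEE_i}$ on the adjoining face $\EEE_i = \KKK_n \cap \Gamma_h$. Since this strip has measure $O(h)$, a local estimate scaling as $h^{1/2}$, combined with Lemma~\ref{lm: perturbation trace bound}, will yield the advertised rate.

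First I would observe that condition (iii) reduces the sum
\begin{equation*}
\|\phi_h\|_{0,\Omega_h}^2 = \sum_{\KKK_n \cap \Gamma_h \neq \emptyset} \|\phi_h\|_{0,\KKK_n}^2
\end{equation*}
to the boundary-adjacent elements. The heart of the argument is the element-local estimate
\begin{equation*}
\|\phi_h\|_{0,\KKK_n} \leq C h^{1/2}\|w_h\|_{0,\EEE_i},
\end{equation*}
which I would establish by affine scaling to a reference element $\hat\KKK$ with face $\hat\EEE$. On $\hat\KKK$, combining (i) and (ii)—read locally—with the equivalence of the $L^\infty$ and $L^2$ norms on the finite-dimensional space $P_k(\hat\KKK)$ yields $\|\hat\phi\|_{L^2(\hat\KKK)} \leq C(k)\|\hat w\|_{L^2(\hat\EEE)}$. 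Undoing the affine scaling produces the factor $h^{d/2}\cdot h^{-(d-1)/2} = h^{1/2}$.

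Squaring, summing over boundary elements, and invoking Lemma~\ref{lm: perturbation trace bound} then gives
\begin{equation*}
\|\phi_h\|_{0,\Omega_h}^2 \leq Ch\|w_h\|_{0,\Gamma_h}^2 \leq Ch^{2k+3}\bigl(\|u\|_{k+1,\Omega} + |\tu|_{k+1,\Gamma_h} + \|f\|_{k-1,\Omega}\bigr)^2,
\end{equation*}
which is the desired estimate.

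The main obstacle will be localizing condition (ii). As stated, (ii) only supplies the global identity $\sup_{\Omega_h}|\phi_h| = \sup_{\Gamma_h}|w_h|$, whereas the reference-element scaling requires the element-local counterpart $\|\hat\phi\|_{L^\infty(\hat\KKK)} \leq C\|\hat w\|_{L^\infty(\hat\EEE)}$. Bridging the two calls for a geometric argument: because $\phi_h$ vanishes on $\Omega_h^0$ (condition (iii)) and its global maximum is attained on $\Gamma_h$, any local extremum on a fixed boundary-adjacent element $\KKK_n$ must be controlled by the face data on its adjoining face $\EEE_i$. Once this pointwise localization is in place, the finite-dimensional norm equivalence on $P_k(\hat\KKK)$ immediately completes the proof.
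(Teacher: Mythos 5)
Your proof follows essentially the same route as the paper's: restrict to the boundary strip of elements using condition (iii), pass from the element $L^2$ norm to an $L^\infty$ bound (a factor $h^{d/2}$), identify the element sup of $\phi_h$ with the face sup of $w_h$, apply the face inverse inequality (a factor $h^{-(d-1)/2}$), and finish with Lemma~\ref{lm: perturbation trace bound}; your reference-element norm equivalence is simply a repackaging of these two inverse-type steps into the single local estimate $\|\phi_h\|_{0,\KKK_n}\le Ch^{1/2}\|w_h\|_{0,\EEE_i}$. The one point you flag as an obstacle --- that condition (ii) is stated globally while the argument needs the elementwise bound $\|\phi_h\|_{L^\infty(\KKK_n)}\le C\|w_h\|_{L^\infty(\EEE_i)}$ --- is genuine, but your proposed ``geometric'' bridge does not close it: the global identity $\sup_{\Omega_h}|\phi_h|=\sup_{\Gamma_h}|w_h|$ still permits $\phi_h$ to be as large as the global maximum of $w_h$ on a boundary element whose own face data is small, so no purely pointwise argument from (i)--(iii) yields the local bound. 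You should know that the paper's own proof simply writes the local equality $\|\phi_h\|^2_{L^\infty(\KKK_n)}=\|w_h\|^2_{L^\infty(\EEE_i)}$ without comment, so it carries the same implicit assumption; the intended reading is that $\phi_h$ is the explicit lifting whose nodal values on $\Gamma_h$ are those of $w_h$ and zero at all interior nodes, for which the local estimate follows from norm equivalence on the reference element exactly as in your second paragraph, and with that construction understood your argument is complete.
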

\begin{proof}
Let $\Omega_h^b:= \Omega_h \setminus \Omega_h^0$. Because $\phi_h = 0$ on $\Omega_h^0$, the inverse inequality implies
\begin{equation*}
\begin{aligned}
	\| \phi_h \|^2_{0, \Omega_h} &= \sum_{\KKK_n \in \Omega_h^b} \| \phi_h\|_{0, \KKK_n}^2 \leq \sum_{\KKK_n \in \Omega_h^b} C h^{d}\|\phi_h\|^2_{L^\infty(\KKK_n)} \\
		&= \sum_{i} C h^{d} \|w_h\|^2_{L^\infty(\EEE_i)} \leq \sum_{i} C h\|w_h\|^2_{0, \EEE_i} 
		= C h \|w_h\|^2_{0, \Gamma_h} .
\end{aligned}
\end{equation*}
The proof follows by an application of Lemma \ref{lm: perturbation trace bound}.
\end{proof}

We proceed with the main result.

\vskip5pt
\paragraph{\textbf{Proof of Theorem \ref{theorem: Dirichlet PE-FEM L2}}}
We couch the PE-FEM Dirichlet problem into an equivalent standard FEM Dirichlet problem as follows. Then, for the extension $\widetilde u$ we have that {$\widetilde u \in H^1(\Omega_h)$} satisfies
\begin{equation*}
	\WTD(\widetilde u, v) = \left< \widehat f, v\right>_{\Omega_h} \quad \forall v \in H^1_0(\Omega_h).
\end{equation*}
Let $\phi_h \in V^k_h$ denote a function satisfying the hypothesis of Lemma \ref{lm: perturbation function}. Then, it is not difficult to see that the solution $u_h \in V^k_h$ of the Dirichlet PE-FEM problem \eqref{eqn: PEF Dirichlet problem} is also a solution to the following weak problem: seek {$u_h \in V^k_{h,0} + u_I + \phi_h$} such that
\begin{equation*}
	\WTD(u_h, v) = \left< \widetilde f, v \right>_{\Omega_h} \quad \forall v \in V^k_{h,0},
\end{equation*}
where {$u_I \in V^k_h$ denotes the interpolant of $\widetilde u$ over $\Omega_h$}. These results imply that
\begin{equation} \label{eqn: difference relation}
\WTD(\widetilde u - u_h, v) = \left<\widehat f - \widetilde f, v\right>_{\Omega_h} \quad \forall v \in V^k_{h,0}.
\end{equation}
Let  {$u_h^0 = u_h - u_I - \phi_h$}, and consider the following  continuous and discrete dual problems: seek $\psi \in H^1_0(\Omega_h)$ such that
\begin{equation} \label{eqn: continuous dual problem}
	\WTD(\chi, \psi) = {\left( u^0_h, \chi\right)}_{\Omega_h} \quad
		\forall \chi \in H^1_0(\Omega_h)
\end{equation}
and $\psi_h \in V^k_{h,0}$ such that
\begin{equation} \label{eqn: discrete dual problem}
	\WTD(\chi, \psi_h) = {\left(u^0_h, \chi \right)}_{\Omega_h} \quad \forall \chi \in V^k_{h,0}.
\end{equation}
Under the assumptions made on the domain in the statement of the theorem, we have that \eqref{eqn: continuous dual problem} satisfies the following continuity bound on polytopial domains \cite{dauge:1988,grisvard2011elliptic} 
\begin{subequations}
\begin{equation} \label{eqn: continuous dual stability}
	\|\psi\|_{1 + s, \Omega_h} \leq C{\|u_h^0\|}_{0, \Omega_h},
\end{equation}
where {$s \in (\frac12, 1]$} is a constant dependent on the largest interior angle of $\partial \Omega_h$. If all interior angles are bounded above by $\pi$ we have that {$s = 1$}.
We also have that the solution of \eqref{eqn: discrete dual problem} satsifies
\begin{equation} \label{eqn: discrete dual stability}
	\|\psi_h\|_{1, \Omega_h} \leq C {\| u_h^0\|}_{-1, \Omega_h}.
\end{equation}
Combining these results with standard finite element error bounds obtained from C\'ea's lemma allows us to conclude that
\begin{equation} \label{eqn: dual error}
	\|\psi - \psi_h\|_{1, \Omega_h} \leq C h^s|\psi|_{1+s, \Omega_h} \leq Ch^s{\| u_h^0\|}_{0, \Omega_h}.
\end{equation}
\end{subequations}
Applying Lemmas \ref{lem:ext error} and \ref{lm: perturbation function} together with standard interpolation results, \eqref{eqn: difference relation}, \eqref{eqn: continuous dual stability}, and \eqref{eqn: discrete dual stability} yields 
\begin{equation} \label{eqn: BOUND}
\begin{aligned}
	\|u^0_h&\|^2_{0, \Omega_h} = D_h(u^0_h, \psi) = D_h(u_h, \psi) - D_h(u_I + \phi_h, \psi) \\
		&= D_h(u_h - \widetilde u, \psi) - D_h(u_I - \widetilde u + \phi_h, \psi) \\
		&= D_h(u_h - \widetilde u, \psi - \psi_h) + D_h(u_h - \widetilde u, \psi_h) - D_h(u_I - \widetilde u + \phi_h, \psi) \\
		&= D_h(u_h - \widetilde u, \psi - \psi_h) + \left< \widetilde f - \widehat f, \psi_h \right>_{\Omega_h} -
			\left< u_I - \widetilde u + \phi_h, u^0_h \right>_{\Omega_h} \\
		&\qquad - \left<u_I - \widetilde u + \phi_h, \widetilde p \nabla \psi \cdot \nn_h \right>_{\Gamma_h}\\
		&\leq C\bigg( \|\widetilde u - u_h \|_{1, \Omega_h} \|\psi - \psi_h \|_{1, \Omega_h}
			+ \| \widetilde f - \widehat f\|_{-1, \Omega_h} \|\psi_h\|_{1, \Omega_h} \\
		&\qquad + 
			\left( \|\widetilde u - u_I\|_{0, \Omega_h} + \|\widetilde u - u_I\|_{0, \Gamma_h} + \|\phi_h\|_{0, \Omega_h} + \|\phi_h\|_{0, \Gamma_h} \right) \|u^0_h\|_{0, \Omega_h} \bigg) \\
		&\leq Ch^{k+s}\left( \|u\|_{k+1, \Omega_h} {+ |\widetilde u|_{k+1, \Gamma_h}}  + \|f\|_{k, \Omega_h} \right) \|u_h^0\|_{0, \Omega_h},
\end{aligned}
\end{equation}
after seeing that 
\begin{equation*}
\begin{aligned}
	D_h(u_I - \widetilde u + \phi_h, \psi) &:= \int_{\Omega_j} {\widetilde p\, }\nabla(u_I - \widetilde u + \phi_h)\cdot \nabla \psi d\xb 
	\\&= \int_{\Omega_h} (u_I - \widetilde u + \phi_h )\, L_D\psi d\xb + \int_{\Gamma_h} (u_I - \widetilde u + \phi_h)\, \widetilde p\, \nabla \psi\cdot \nn_h ds
\end{aligned}
\end{equation*}
from Green's identity, recalling that $L_D \psi = u^0_h$, and subsequently having that
\begin{equation*}
\begin{aligned}
	\left< u_I - \widetilde u + \phi_h, \widetilde p \nabla \psi\cdot \nn_h \right>_{\Gamma_h} 
		&\leq \orho\left(\|u_I - \widetilde u\|_{1/2-s, \Gamma_h} + \|\phi_h\|_{1/2 - s, \Gamma_h}\right)
			\|\nabla \psi\|_{s-1/2, \Gamma_h} \\
		&\leq \orho\left(\|u_I - \widetilde u\|_{0, \Gamma_h} + \|\phi_h\|_{0, \Gamma_h}\right)
			\|\psi\|_{1+s, \Omega_h},
\end{aligned}
\end{equation*}
Here, the trace is well--defined since we have taken $s > \frac12$ (see Remark \ref{remark: dual regularity remark}).
To complete the proof we use the above result in conjunction with the triangle inequality to obtain
$$
	\| \widetilde u - u_h \|_{0, \Omega_h} \leq \|u^0_h\|_{0, \Omega_h} + \|\widetilde u - u_I\|_{0, \Omega_h} + \|\phi_h\|_{0, \Omega_h}
$$
from which the result of the theorem follows after applying \eqref{eqn: BOUND}, standard interpolation bounds, and Lemma \ref{lm: perturbation function}.

\begin{remark} \label{remark: dual regularity remark}
Because we have assumed that $\Gamma$ is $C^{k+1}$ smooth, there exists an $h_0$ such that for all $h < h_0$ we have that the largest interior angle of $\Gamma_h$ is bounded above by $\frac{3\pi}{2}$. This implies that $\psi\in H^{1+s}(\Omega_h)$ with $s > \frac12$. Additionally, because the largest interior angle of $\Gamma_h$ approaches $\pi$ as $h \rightarrow 0$, we have that $s \rightarrow 1$ as $h \rightarrow 0$. Therefore the $L^2$ estimate presented above for nonconvex polytopial domains is asymptotically optimal.
\end{remark}

\subsection{Proof of Theorem \ref{theorem: Neumann PE-FEM H1}}\label{sec:aeeen}

We assume that \eqref{eqn: weak Neumann} has a solution $u\in H^{k+1}(\Omega)$ with an extension $\tu \in H^{k+1}(\mathbb R^\DDD)$. We denote 
$\widehat f = L_N \widetilde u$, where $L_N$ is the strong operator in \eqref{contprobn}. It follows that $\widetilde u$ satisfies 
\begin{equation} \label{Neumann continuos problem}
N_{h}(\widetilde u, v_h) - \left< \widetilde p \, \nabla \widetilde u  \cdot \mathbf n_h, v_h \right>_{\Gamma_h} =\left<\widehat f,v_h\right>_{\Omega_h}, \forall v_h \in V_h^k(\Omega_h).
\end{equation}

From the derivation of Strang's lemma \cite[Theorem 10.1.1]{brenner2007mathematical}, we have that
\begin{equation} \label{eqn: base Strang inequality}
\begin{aligned}
	\|\widetilde u - u_h\|_{1, \Omega_h} &\leq  C_1\inf_{w \in V^k_h}\Big[ \|\widetilde u - w\|_{1, \Omega_h} \\
			&\quad + \sup_{\substack{v \in V^k_h \\ \|v\|_{1, \Omega_h} = 1}} 
			\Big(\left|B_{h,N}(\widetilde u - w, v)\right| + \left| B_{h,N}(\widetilde u, v) - F_{h,N}(v) \right|\Big)\Big].
\end{aligned}
\end{equation}
Setting $w = u_I$, where $u_I \in V^k_h$ is the {Lagrange interpolant} 
of $\widetilde u$, we have, by \eqref{eqn: Neumann continuity bound in H1} and standard interpolation bounds and recalling that we have now required that $\delta_h \leq Ch^2$, that
\begin{equation} \label{eqn: Neumann error bound 1}
 \sup_{\substack{v \in V^k_h \\ \|v\|_{1, \Omega_h} = 1}} 
			\left|B_{h,N}(\widetilde u - w, v)\right| \leq 
			C h^k|u|_{k+1, \Omega_h}.
\end{equation}
From the definition of $B_{h,N}$ and \eqref{Neumann continuos problem}, we have
$$
	B_{h,N}(\widetilde u, v) =  \left <\widehat f, v \right>_{\Omega_h} + \Big< \widetilde p\circ \etab(\xib)\left. \mathbf T^{k-1}_h(\nabla \widetilde u)\right|_{\etab(\xib)}\cdot \nn, v\Big>_{\Gamma_h}. 
$$
Recalling \eqref{eqn: Neumann representation} we have that
$$
	B_{h,N}(\widetilde u, v) = \left<\widehat f, v \right>_{\Omega_h} + \left< g_N\circ \etab(\xib) - \widetilde p\circ\etab(\xib)\left.\mathbf{R}^{k-1}_h(\nabla \widehat u)\right|_{\etab(\xib)} \cdot \nn, v \right>_{\Gamma_h}.
$$
Thus, we have from Lemmas \ref{lm: Taylor approximation} and \ref{lem:ext error} that
\begin{equation}\label{eqn: Neumann error bound 2}
\begin{aligned}
	 &\sup_{\substack{v \in V^k_h \\ \|v\|_{1, \Omega_h} = 1}}  \left| B_{h,N}(\widetilde u, v) - F_{h,N}(v)\right|\\
	 &\qquad =  \sup_{\substack{v \in V^k_h \\ \|v\|_{1, \Omega_h} = 1}} \Big|\left< \widehat f - \widetilde f, v \right>_{\Omega_h} {-} \left< \widetilde p\circ\etab(\xib) \left. \mathbf R^{k-1}_h(\nabla \widetilde u)\right|_{\etab(\xib)} \cdot \nn, v \right>_{\Gamma_h} \Big| \\
	 &\qquad \leq C\delta_h^{k-1}\|f\|_{k-1, \Omega} + C\delta_h^{k+\frac12}|u|_{k+1, \Omega} 
	  \leq Ch^{k} \left(\|f\|_{k-1, \Omega} + |u|_{k+1, \Omega} \right).
\end{aligned}
\end{equation}
The result of the theorem follows by inserting \eqref{eqn: Neumann error bound 1} and \eqref{eqn: Neumann error bound 2} into \eqref{eqn: base Strang inequality} and subsequently applying the approximation bound \eqref{eqn: best approximation bound}.

\end{document}